\numberwithin{equation}{section}
\newtheorem{maintheorem}{Theorem}
\newtheorem{theorem}{Theorem}[section]
\newtheorem{lemma}[theorem]{Lemma}
\newtheorem{proposition}[theorem]{Proposition}
\newtheorem{corollary}[theorem]{Corollary}
\newtheorem{fact}[theorem]{Fact}
\newtheorem{definition}[theorem]{Definition}
\newtheorem{remark}[theorem]{Remark}
\newtheorem{example}[theorem]{Example}
\newtheorem{open}[theorem]{Question}
\newtheorem*{lemma*}{Lemma}
\newtheorem*{proposition*}{Proposition}
\def\E{\mathop{\mathbb E}}
\newcommand{\PP}{\mathcal{P}}
\newcommand{\ek}{(2_k^{\mathrm{e}})}
\newcommand{\vk}{(2_k^{\mathrm{v}})}
\newcommand{\ekk}{(3_k^{\mathrm{e}})}
\newcommand{\vkk}{(3_k^{\mathrm{v}})}
\newcommand{\Pke}{ \mathcal{P}_k^{\mathrm{e}}}
\newcommand{\Pkv}{ \mathcal{P}_k^{\mathrm{v}}}
\newcommand{\sime}{\stackrel{\mathrm{e}}{\sim}}
\newcommand{\simv}{\stackrel{\mathrm{v}}{\sim}}
\renewcommand{\Pr}{ \mathrm P}
\newcommand{\rr}{\mathrm{r}}
\newcommand{\ee}{\mathrm{e}}
\newcommand{\vv}{\mathrm{v}}
\newcommand{\dE}{\overrightarrow{E}}
\newcommand{\de}{\overrightarrow{e} }
\newcommand{\df}{\overrightarrow{f} }
\newcommand{\dpi}{\overrightarrow{\pi} }
\newcommand{\Ind}[1]{\mathbf{1}\{#1\}}
\DeclareMathSymbol{\leqslant}{\mathalpha}{AMSa}{"36} 
\DeclareMathSymbol{\geqslant}{\mathalpha}{AMSa}{"3E} 
\DeclareMathSymbol{\eset}{\mathalpha}{AMSb}{"3F}     
\renewcommand{\leq}{\;\leqslant\;}                   
\renewcommand{\epsilon}{\varepsilon}
\newcommand{\sfrac}[2]{\mbox{\small $\frac{#1}{#2}$}}
\newcommand{\N}{\mathbb N}
\newcommand{\R}{\mathbb R}
\newcommand{\Z}{\mathbb Z}
\begin{document}

\title{Reversibility of the non-backtracking random walk}
\author{Jonathan Hermon
\thanks{University of Cambridge, Cambridge, UK. E-mail: {\tt jh2129@statslab.cam.ac.uk}. Financial support by
the EPSRC grant EP/L018896/1.}}

\date{}
\maketitle
 
\begin{abstract}
Let  $G$ be a connected graph   of uniformly bounded degree.  A $k$ non-backtracking random walk ($k$-NBRW) $(X_n)_{n =0}^{\infty}$ on $G$ evolves according to the following rule: Given $ (X_n)_{n =0}^{s}$, at time $s+1$ the walk picks at random some edge which is incident to $X_s$ that was not crossed in the last $k$ steps and moves to its other end-point. If no such edge exists then it makes a simple random walk step.  Assume that for some $R>0$ every ball of radius $R$ in $G$ contains a simple cycle of length at least $k$. We show that under some ``nice" random time change the $k$-NBRW becomes reversible. This is used to prove that it  is recurrent iff the simple random walk is recurrent.
\end{abstract}

\paragraph*{\bf Keywords:}
{\small Non-backtracking random walk, recurrence, transience.
}

\section{Introduction}
In this work we study a generalization of non-backtracking random walk (\textbf{NBRW}) on a graph $G$, in which the walk is required (when possible) to avoid either the last $k$ edges it crossed (viewed as undirected edges), or the last $k$ vertices it visited. If this is not possible, we say the walk got stuck, in which case it moves to a random neighbor. We call such a walk an edge (respectively, a vertex\footnote{When a certain statement is true for both the edge and the vertex NBRWs or if the type of walk is clear from context, we often omit the prefix edge/vertex.}) $k$\emph{-NBRW} (for more precise definitions see \S\ref{s:defofwalks}). 

The special case of $k=1$, which is simply a NBRW, received much attention (e.g.~\cite{alon,benhamu,beres,bordenave,CLT,kempton,lee,ram,GW,cogrowth2}). A recurring theme in the existing literature is that when $G$ is regular the NBRW and the simple random walk (\textbf{SRW}) on $G$ (see \S\ref{s:defofwalks} for the definition of SRW) are intimately related to one another.  In particular, when $G$ is regular one may deduce some properties of one walk by establishing related properties for the other, or estimate certain parameters for one walk via related parameters of the other (e.g.~\cite{alon,ram,cogrowth2}).  Conversely, when $G$ is non-regular the two walks can exhibit quite different behaviors (e.g.~\cite{beres,GW}).

\medskip  

Indeed, when $G$ is $d$-regular for $d \ge 3$, a well-known  argument involving the universal cover  of $G$ shows that the SRW $(S_n)_{n=0}^{\infty}$ and the NBRW  $(S_n^{\mathrm{BN}})_{n=0}^{\infty}$ on $G$ can be coupled so that $S_n^{\mathrm{BN}}=S_{\tau_n}$ for all $n$, where $(\tau_{n+1}-\tau_n)_{n=1}^{\infty} $ are i.i.d.\ random variables satisfying that $\Pr(\tau_2-\tau_1 > s) \le e^{-cs}$ for all $s \ge 0$  for some $c>0$ (see Proposition \ref{prop:reg}). However, there does not exist such a coupling when $G$ is non-regular or when the NBRW is replaced by a $k$-NBRW, provided that $k > \mathrm{girth}(G)$ (where the \emph{girth} of a graph $G$, denoted by $\mathrm{girth}(G)$, is defined as the length of the shortest cycle in $G$).

One of the most fundamental questions that one can ask about any random walk is whether it is recurrent or transient. 
 A fairly simple consequence of the aforementioned coupling is that when $G$ is regular SRW is recurrent iff the NBRW is recurrent. 
\begin{proposition}
\label{prop:reg}
Let $G=(V,E)$ be a $d$-regular connected graph for some $d \ge 3$. Then the SRW on $G$ is recurrent iff the NBRW on $G$ is recurrent.
\end{proposition}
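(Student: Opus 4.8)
The plan is to reduce the recurrence dichotomy for both walks to the finiteness of one and the same series, by lifting everything to the universal cover. Let $\pi\colon T_d\to G$ be the covering map, where $T_d$ is the $d$-regular tree, fix a base vertex $o\in V$ and a lift $\rho\in\pi^{-1}(o)$. The key elementary observation is that both walks lift: SRW on $G$ from $o$ is the $\pi$-image of SRW on $T_d$ from $\rho$, and, since the $k=1$ non-backtracking rule depends only on the last edge crossed and $\pi$ is a local isomorphism (so the number of admissible moves, $d$ at the first step and $d-1$ thereafter, is preserved), the NBRW on $G$ started from an edge at $o$ is the $\pi$-image of the NBRW on $T_d$ started from the corresponding edge at $\rho$. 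This is the same covering-space picture that underlies the coupling $S^{\mathrm{BN}}_n=S_{\tau_n}$ mentioned above.

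I would first record the two standard reductions. SRW on a connected graph is recurrent iff $G_{\mathrm{SRW}}(o):=\sum_{n\geq0}\Pr_o(S_n=o)=\infty$. The NBRW is Markov not on vertices but on directed edges; applying the renewal structure of the directed-edge chain at the starting directed edge $f_0$ (with head $o$) shows that ``the NBRW visits $o$ infinitely often almost surely'' is a $0$–$1$ event, equivalent to $\sum_n\Pr(f_n=f_0)=\infty$, and, summing over the $d$ directed edges into $o$, to $G_{\mathrm{NBRW}}(o):=\sum_{n\geq0}\Pr(X_n=o)=\infty$. Hence it suffices to prove $G_{\mathrm{SRW}}(o)<\infty\iff G_{\mathrm{NBRW}}(o)<\infty$.

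By the lifting, $G_{\mathrm{SRW}}(o)=\sum_{w\in\pi^{-1}(o)}g(\rho,w)$ and $G_{\mathrm{NBRW}}(o)=\sum_{w\in\pi^{-1}(o)}h(\rho,w)$, where $g(\rho,w)$ and $h(\rho,w)$ are the expected numbers of visits to $w$ of, respectively, SRW and NBRW on $T_d$ started at $\rho$. These are explicit on the tree: an elementary first-step analysis shows SRW on $T_d$ hits a fixed neighbour with probability $1/(d-1)$, whence $g(\rho,w)=\frac{d-1}{d-2}(d-1)^{-\mathrm{dist}(\rho,w)}$; and NBRW on $T_d$ from $\rho$ simply follows a uniformly random ray, visiting each vertex at most once and passing through a vertex at distance $j\geq1$ with probability $\frac1d(d-1)^{-(j-1)}$, so $h(\rho,\rho)=1$ and $h(\rho,w)=\frac1d(d-1)^{-(\mathrm{dist}(\rho,w)-1)}$ otherwise. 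Writing $\Sigma:=\sum_{w\in\pi^{-1}(o)}(d-1)^{-\mathrm{dist}(\rho,w)}$, this gives $G_{\mathrm{SRW}}(o)=\frac{d-1}{d-2}\,\Sigma$ and $G_{\mathrm{NBRW}}(o)=\frac1d+\frac{d-1}{d}\,\Sigma$ (equivalently $(d-2)\,G_{\mathrm{SRW}}(o)=d\,G_{\mathrm{NBRW}}(o)-1$). Since $d\geq3$, both prefactors are finite and positive, so $G_{\mathrm{SRW}}(o)<\infty\iff\Sigma<\infty\iff G_{\mathrm{NBRW}}(o)<\infty$, as required.

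I expect the tree computations to be routine; the points that need care are the bookkeeping ones. Chiefly, ``NBRW recurrent'' must be set up and manipulated through the directed-edge chain, since there is no Markov property at the level of vertices, and one must verify that the lift of the NBRW to $T_d$ genuinely has the same law — which is exactly where $d$-regularity (and $d\geq3$, so that $T_d$ is transient) enters. If one prefers to argue straight from the coupling $S^{\mathrm{BN}}_n=S_{\tau_n}$ instead, the implication ``NBRW recurrent $\Rightarrow$ SRW recurrent'' is immediate, as the visits of the NBRW to $o$ form a subset of those of the SRW; the converse is precisely the content of the computation above, which says that the SRW-visits to $o$ that are missed by the thinning $(\tau_n)$ only inflate the expected visit count by the bounded factor $d/(d-2)$.
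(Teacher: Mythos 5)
Your proof is correct, and while it opens with the same universal-cover picture the paper uses, the mechanism is genuinely different. The paper builds the NBRW on the tree $T_d$ as the diverging ray of the lifted SRW $\mathbf{Y}$: the implication ``NBRW recurrent $\Rightarrow$ SRW recurrent'' is then free (the ray's visits to $\pi^{-1}(o)$ are a subset of $\mathbf{Y}$'s), and the converse is proved by a stopping-time argument and a conditional Borel--Cantelli: each time $\mathbf{Y}$ reaches a lift of $o$, it lies on the diverging ray with conditional probability at least $\sfrac{d-2}{d-1}$, so if SRW hits lifts of $o$ infinitely often then so does the ray. You instead compute the Green functions of both lifted walks on $T_d$ in closed form and observe that both are affine in the same series $\Sigma=\sum_{w\in\pi^{-1}(o)}(d-1)^{-\mathrm{dist}(\rho,w)}$, namely $G_{\mathrm{SRW}}(o)=\sfrac{d-1}{d-2}\Sigma$ and $G_{\mathrm{NBRW}}(o)=\sfrac{1}{d}+\sfrac{d-1}{d}\Sigma$, so the two finiteness questions coincide; this yields the sharper quantitative identity $(d-2)G_{\mathrm{SRW}}(o)=dG_{\mathrm{NBRW}}(o)-1$ rather than just the dichotomy, and replaces the stopping-time bookkeeping by two one-line tree calculations. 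The modest cost of your route is that the reduction of ``NBRW visits $o$ infinitely often a.s.'' to divergence of its Green function runs through the directed-edge chain $B$ and requires that chain to be irreducible; this is true for connected $d$-regular $G$ with $d\ge 3$ but is worth stating (the paper's coupling argument avoids invoking it by working with the visit events directly). Your closing remark is also apt: in the coupling language your computation says precisely that the thinning $(\tau_n)$ can discard visits to $o$ only up to a bounded factor $\sfrac{d}{d-2}$ in expectation.
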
  
Proposition \ref{prop:reg} is proved in \S\ref{s:coupling}. It is natural to ask whether the same assertion holds even without regularity or when the NBRW is replaced with a $k$-NBRW.   
 In this work we show that under some mild conditions the answer to the last question is positive. To the best of the author's knowledge, this was unknown even for $k=1$. Moreover, we show (Theorem \ref{thm:Cayley}) that for Cayley graphs of finitely generated infinite Abelian groups which are not isomorphic to $\Z$ we have for all $k$ that the SRW is recurrent iff the $k$-NBRW is recurrent . Finally, we show (Theorem \ref{thm:pNBRW}) that for any connected graph of bounded degree   the variant of the NBRW which backtracks at each step with probability $p \in (0,1) $ is recurrent iff the SRW is recurrent.

 A major difficulty in studying the $k$-NBRW is that even for $k=1$ it  is non-reversible (see \S\ref{s:Markov} for the definition of reversibility).  It is much harder to apply techniques from potential theory to non-reversible Markov chains. One reason is that in order to apply techniques from potential theory one first has to know a stationary  measure for the chain. For non-reversible Markov chains it is sometimes hard to find a stationary measure, and if the chain is transient, it is possible that no stationary measure exists. In particular, for $k>1$ it seems quite difficult to find a stationary measure for the vertex $k$-NBRW. In fact, this is also true for the edge $k$-NBRW if it may get stuck. 

Another reason is the lack of rich variety of comparison  techniques available in the reversible setup (see \S\ref{s:comparison}). However,  one available comparison  technique which we shall exploit is that the additive symmetrization of a recurrent Markov chain is always recurrent (see Theorem \ref{thm:AG}). Unfortunately, there exist only few techniques for establishing the converse implication,  or more generally for arguing that the geometry of a non-reversible chain is in some sense equivalent to that of a reversible one. 

To overcome this difficulty, 
  we show that if $G$ has a positive density of short simple cycles then viewed at ``nice" random times, the edge NBRW  becomes a reversible chain which can be compared with SRW on $G$. So is the case for the $k$-NBRW under slightly more complicated conditions. 

\subsection{Basic definitions - SRW, NBRW and recurrence/transience}
\label{s:defofwalks}
Let $G=(V,E)$ be a locally finite connected (simple)\footnote{Although all of our results can be extended to the case that $G$ is a multigraph, for the sake of simplicity of notation and clarity of presentation we shall assume that $G$ contains no multiple edges nor loops.} graph.
A \emph{simple random walk} (\textbf{SRW}) on $G$ is a reversible Markov chain  with state space $V$ and transition probabilities: $P(x,y)=\Ind{x \sim y}/\deg(x)$, where $x \sim y$ indicates that $\{x,y\} \in E$, and $\deg (x)$ is the degree of $x$ (which is the number of edges which are incident to $x$).    A \emph{non-backtracking random walk} (\textbf{NBRW})  $(X_t)_{t=0}^{\infty}$ on $G$ started from vertex $u$ makes its first step according to the same rule as SRW started from $u$, and then evolves according to the following rule: the conditional distribution of  $X_{t+2}$ given $(X_i)_{i=0}^{t+1}$ is chosen from the uniform distribution on $\{z \in V :z\sim X_{t+1},z \neq X_t \}$, unless this set is empty (i.e.~$\deg(X_{t+1}=1)$), in which case $X_{t+2}=X_t$ (i.e., the walk is forced to backtrack). While this is \textbf{not} a Markov chain, it can be transformed into a Markov chain on  $\overrightarrow{E} $, the set of directed edges of $G$, whose transition probabilities are given by
\begin{equation}
\label{e:B}
B((x,y),(z,w)):=\frac{\Ind{y=z,w \neq x } }{\deg(y)-1}+\Ind{y=z,w = x,\deg(y)=1}.
\end{equation} 
Let $k \in \N$. Similarly, an \emph{edge} $k$\emph{-NBRW} (respectively, a \emph{vertex} $k$\emph{-NBRW})  $(X_t)_{t=0}^{\infty}$ evolves according to the following rule: the conditional distribution of  $X_{t+1}$ given $(X_i)_{i=0}^{t}$ is chosen from the uniform distribution on  \[\{z \in V :z\sim X_{t},\, \{z , X_t\} \notin \{ \{X_{i-1},X_i \}: t-k<i \le t  \}  \}\]
\[(\text{respectively,} \quad \{z \in V :z\sim X_{t}, \, z \notin \{ X_i : t-k \le i \le t  \}  \}), \]
unless this set is empty, in which case we say the walk is \emph{stuck}. If the walk is stuck then $X_{t+1}$ is chosen from the uniform distribution over the neighbors of $X_t$ (this is done in order to ensure the $k$-NBRW does not have absorbing states). These walks can be transformed into Markov chains on a subset of the set of directed paths of length $k$ in an analogous manner to the case $k=1$. We denote the collection of all paths of length $ k$ in the graph  $G=(V,E)$ (where $G$ will be clear from context) by
\[\mathcal{P}_{k}:=\{(x_i)_{i=0}^{k} \in V^{k+1} : \{x_{i} , x_{i-1}\} \in E \text{ for all }i \in [  k] \}. \] 
Let
\begin{equation}
\label{e:1.2}
\begin{split}
&\Pke:=\{ \gamma \in \PP_k: \gamma \text{ crosses $k$ distinct edges} \} \quad \text{and}
\\ &  \Pkv:=\{ \gamma \in \PP_k: \gamma \text{ contains $k+1$ distinct vertices} \}.
 \end{split}
 \end{equation}
We may take the state space of the edge (respectively, vertex) $k$-NBRW to be the collection of all length $k$ paths which are accessible from $\Pke$ (respectively, $\Pkv $). If the walk cannot get stuck then the state spaces are simply  $\Pke$ and $\Pkv $, respectively (see \S\ref{s:MC} for details).

\begin{remark}
Note that (when $G$ is simple) the NBRW is both a vertex and an edge 1-NBRW. However for $k>1$  the restrictions imposed on the vertex $k$-NBRW are stronger than those imposed on the edge $k$-NBRW. Moreover, the former is less symmetric (the latter satisfies a symmetry relation \eqref{e:pike2} similar to the detailed balance equation, which the former does not) and thus harder to analyze. 
\end{remark}
 
\begin{remark}
\label{r:nonstuck}
In what comes we shall often assume that the $k$-NBRW cannot get stuck. Note that the vertex (respectively, edge) $k$-NBRW cannot get stuck if the minimal degree of the graph is at least $k+1$ (respectively, $\lceil 2k/3 \rceil +1 $). 

Another  condition which ensures that the edge $k$-NBRW cannot get stuck is that all vertices are either of even degree or of degree at least $\lceil 2k/3 \rceil +1$. To be precise, (under the last condition) the only  vertex at which the edge $k$-NBRW might potentially get stuck is the  starting point, but this is not the case in the Markov chain representation of the edge $k$-NBRW, provided that the starting state is a path that does not get stuck. Consequently, the collection of graphs for which the edge $k$-NBRW cannot get stuck is large.    
\end{remark} 

We say that a vertex $v$ is \emph{recurrent} for a certain type of random walk on a graph $G$ (e.g.\ SRW and edge/vertex $k$-NBRW) if started from $v$ the walk returns to $v$ infinitely often $\mathrm{a.s.}$. Otherwise, vertex $v$ is said to be \emph{transient} for that walk type. We say that $G$ is recurrent (respectively, transient) for a certain type of random walk if all of its vertices are recurrent (respectively, transient) for that walk.
\begin{remark}
When the $k$-NBRW is viewed as a Markov chain, its state space is not the vertex set, but a certain subset of the collection of all paths of length $k$. In all of the cases considered in our main results, Theorems \ref{thm:main}-\ref{thm:pNBRW}, the above notion of recurrence for the $k$-NBRW is in fact equivalent to the usual notion of recurrence for a Markov chain. Thus there is no ambiguity in the definition of recurrence/transience.
\end{remark} 

\subsection{Our results}
\label{s:results}
Before stating our results we first list  some conditions for later reference.     
\begin{itemize}
\item[$(1)$]  There exists some $L>0$ such that every $2$\emph{-path} is of length at most $L$, where a $2$-path is a path consisting of degree 2 vertices.
\item[$(2)$] There exists some $R>0$ such that every ball of radius $R$ contains a cycle.
\item[$(2_k^{\mathrm{e}})$]  There exists some $R>0$ such that for every $\gamma,\gamma' \in \Pke$ which contain at least one common vertex, the edge $k$-NBRW can reach $\gamma'$ from $\gamma$ in at most $R$ steps. 
\item[$(2_{k}^{\mathrm{v}})$] There exists some $R>0$ such that for every $\gamma,\gamma' \in \Pkv$ which contain at least one common vertex, the vertex $k$-NBRW can reach $\gamma'$ from $\gamma$ in at most $R$ steps.\item[$\ekk $] The  edge $k$-NBRW on $G$ cannot get stuck.
\item[$\vkk $] The  vertex $k$-NBRW on $G$ cannot get stuck.
\end{itemize}
Observe that conditions $(2_k^{\mathrm{e}})$ and $(2_k^{\mathrm{v}})$ both imply condition $(1)$. Condition $(1)$ will not be assumed in any of our main results. 

 Let $v_0,v_1,\ldots,v_{\ell}=v_0 \in V $ be such that $\{v_{i-1},v_{i}\} \in E $ for all $i \in [\ell]:=\{1,2,\ldots,\ell\} $.  We say that $(v_0,v_1,\ldots,v_{\ell}=v_0)$ is an \emph{edge} (respectively, \emph{vertex}) \emph{simple cycle} (of length $\ell$) if $\{v_{i-1},v_{i}\} \neq \{v_{j-1},v_{j}\}$ for all $i \neq j \in [\ell ]$ (respectively, if $|\{v_{i} : i \in [\ell ] \}|=\ell$). 
     
\begin{itemize}
\item[$(4_k^{\mathrm{e}})$]  There exists some $R>0$ such that every ball of radius $R$ contains an edge simple cycle of length at least $k$.
\item[$(4_{k}^{\mathrm{v}})$] There exists some $R>0$ such that every ball of radius $R$ contains a vertex simple cycle of length at least $k+1$.
\end{itemize}
We believe that conditions $(2_k^{\mathrm{e}})$ and $(2_k^{\mathrm{v}})$ are in fact equivalent to conditions  $(4_k^{\mathrm{e}})$ and $(4_k^{\mathrm{v}})$, respectively. Conditions  $(4_k^{\mathrm{e}})$ and $(4_k^{\mathrm{v}})$ will not be used in the paper.

The next theorem concerns with the case that   $G$ is of (uniformly) \emph{bounded degree} (i.e., $\sup_{v \in V }\deg (v) < \infty $).   
\begin{maintheorem}
\label{thm:main}
Let $G$ be a connected  graph of bounded degree.
\begin{itemize}
\item[(i)]
If the SRW on $G$ is transient then the NBRW on $G$ is also transient. 
\item[(ii)] If condition $(2)$ holds then the SRW on $G$ is transient iff the NBRW on $G$ is transient.
\item[(iii)]
If conditions  $\ek$ and $\ekk$  hold, then the SRW on $G$ is transient iff the edge $k$-NBRW is transient.
\item[(iv)]
If  $G$ is vertex-transitive, conditions  $\vk$ and $\vkk$ hold and  the SRW on $G$ is transient, then the vertex $k$-NBRW is also transient.

\end{itemize}
\end{maintheorem}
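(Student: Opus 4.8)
My plan is to reduce every statement to a comparison between a \emph{reversible} Markov chain and the SRW on $G$, using that recurrence is invariant under quasi-isometries of bounded-geometry networks (bounded degrees and conductances bounded above and below). For the implications of the form ``$k$-NBRW recurrent $\Rightarrow$ SRW recurrent'' --- which is what is needed for (i), for the backward directions of (ii)--(iii), and for (iv) --- I would use symmetrization. Realising the $k$-NBRW as a Markov chain on (the accessible part of) $\dE$, $\Pke$ or $\Pkv$, recurrence of a vertex of $G$ forces some state of the path-chain to be recurrent, since bounded degree means only $O(1)$ paths pass through a vertex. For the edge $k$-NBRW the path-chain has an explicit reversal-invariant stationary measure (this is exactly the content of the symmetry relation \eqref{e:pike2}, on summing over the first coordinate), so Theorem \ref{thm:AG} makes its additive symmetrization $S$ recurrent; and $S$ is reversible with uniformly bounded conductances, with the ``initial vertex'' map a quasi-isometry from $S$'s network onto $G$ with the SRW conductances (one slides a path one edge at a time using the forward moves and the time-reversed moves of the chain), so the SRW on $G$ is recurrent. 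This is (i) when $k=1$ (no cycle hypothesis is needed, and the fact that the NBRW may get stuck at degree-$1$ vertices is harmless, the uniform measure on $\dE$ still being stationary), and it gives the ``$\Leftarrow$'' parts of (ii) and (iii).

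For the forward implications ``SRW recurrent $\Rightarrow$ $k$-NBRW recurrent'' in (ii) and (iii), I would invoke the paper's main construction: under condition $(2)$, respectively $\ek$, short simple cycles occur with positive density everywhere, and traversing a simple cycle is symmetric under reversal (again \eqref{e:pike2} at the level of formulas), so there is a sequence of a.s.\ finite random times at which the observed $k$-NBRW is reversible. Condition $(2)$ (resp.\ $\ek$) then ensures both that the gaps between these times have uniformly bounded mean --- so the $k$-NBRW is recurrent iff the observed chain is --- and that the observed chain's network is quasi-isometric to $G$, so it is recurrent iff the SRW on $G$ is. Condition $\ekk$ (resp.\ the accessibility clause in (ii)) serves only to fix the state space so that ``recurrent'' is unambiguous.

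Part (iv) claims only one implication, so only symmetrization is needed, and the sole real obstacle is that no explicit stationary measure for the vertex $k$-NBRW is available. Here my plan is to exploit vertex-transitivity: $\Aut(G)$ acts on $\Pkv$ with finitely many orbits (a length-$k$ path from a fixed vertex has $O(1)$ possibilities and every path is a translate of one), and the chain is $\Aut(G)$-equivariant, so it descends to a \emph{finite} Markov chain on the orbit set; recurrence of the vertex $k$-NBRW forces that finite chain to be recurrent, and its stationary distribution pulls back to a stationary measure on $\Pkv$ comparable to the counting measure on $V$, after which conditions $\vk$, $\vkk$ give the quasi-isometry with $G$ and the argument concludes as before. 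I expect the two hardest steps to be exactly the two non-routine ones: (a) the construction of the ``nice'' random times together with the proof that the observed chain is reversible \emph{and} quasi-isometric to $G$ --- the heart of (ii)--(iii) --- and (b) for (iv), checking that the pulled-back measure really is stationary and really is comparable to the counting measure, which is where one needs $\Aut(G)$ to be unimodular, with non-unimodular (hence non-amenable) vertex-transitive graphs treated separately since on those every uniformly elliptic nearest-neighbour chain, the vertex $k$-NBRW included, is already transient. Everything else --- transferring recurrence by pigeonhole, and comparing bounded-geometry networks --- is routine.
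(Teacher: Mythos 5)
Your strategy matches the paper's quite closely for Parts (i)--(iii). For (i) you are using the paper's argument exactly: the counting measure on $\dE$ is stationary for the NBRW, its additive symmetrization lives on $G_{\dE}$, which is roughly-isometric to $G$ (Lemmas \ref{lem:simpleobservtion} and \ref{lem:equivek1} together with the reduction to minimal degree $\ge 2$ carried out in Corollary \ref{cor:SRWASNBRW}), and Theorem \ref{thm:AG} then transfers transience to the NBRW. For (ii) and (iii) you propose a slightly different packaging than the paper: you would prove ``$k$-NBRW recurrent $\Rightarrow$ SRW recurrent'' by symmetrization plus Theorem \ref{thm:AG}, and the converse by the time-change construction, whereas the paper runs a single auxiliary reversible chain $\mathbf{Q}$ (observing the $k$-NBRW at the random times when it ``attempts'' to reverse, which have uniformly positive rate by condition $(2)$, respectively $\ek$) and shows in one stroke that $\mathbf{Q}$ is recurrent iff the $k$-NBRW is, and that $\mathbf{Q}$ is recurrent iff SRW on $G$ is, by a flow comparison with SRW on the appropriate line-type graph. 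The paper itself notes at the end of \S\ref{s:s} that the symmetrization route is available for these parts, so your split is a legitimate alternative, and you correctly identify the time-change/reversibility construction as the nonroutine core of the forward direction.

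Your plan for Part (iv) has a real gap. The remark that ``the finite chain on orbits is recurrent'' is vacuous --- every finite irreducible chain is positive recurrent --- and the substantive step, that the pullback of its stationary distribution to $\Pkv$ is stationary and comparable to the counting measure, is exactly what your unimodularity assumption is needed for. Your escape hatch for the non-unimodular case is, however, not justified: you would need that the vertex $k$-NBRW --- a non-reversible chain on $\Pkv$ for which you do not yet know any stationary measure --- is transient whenever $G$ is non-amenable. Spectral-gap or isoperimetric arguments for chains on non-amenable transitive graphs presuppose a stationary measure comparable to counting, and Theorem \ref{thm:AG} requires a stationary measure to even define the symmetrization, so the dichotomy is circular. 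The paper's proof of (iv) sidesteps the issue entirely: assuming towards a contradiction that the vertex $k$-NBRW is recurrent, it invokes Derman's formula $\pi(\alpha)/\pi(\beta)=\lim_t\big(\sum_{i\le t}P_{k,\vv}^i(\alpha,\alpha)\big)\big/\big(\sum_{i\le t}P_{k,\vv}^i(\beta,\beta)\big)$, valid for any irreducible recurrent chain. Since return probabilities are manifestly $\Aut(G)$-invariant, so is $\pi$, with no unimodularity hypothesis; conditions $\vk$ and $\vkk$ then give $\sup_{\alpha,\beta}\pi(\alpha)/\pi(\beta)<\infty$ by bounded-step irreducibility among paths sharing a starting vertex, and Theorem \ref{thm:AG} together with Fact \ref{f:RIinvariance} and Lemma \ref{lem:equivek} yields the contradiction. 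This is strictly simpler and more general than the quotient-chain construction, and I would replace that step with it.
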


Let $H$ be a countable group. Let $S$ be a finite symmetric (i.e., $S=S^{-1}:=\{s^{-1}:s \in S \}$) set of generators of $H$ (that is, $H=\{s_1 \cdots s_n:n \ge 0,s_1,\ldots s_n \in S \}$). The (right)
 \textbf{Cayley graph} of $H$ w.r.t.~$S$ is $H(S):=(H,E(S))$, where $E(S):= \{\{h,hs \}:h \in H,s \in S \}$.  As the following theorem asserts, when  $G$ is a Cayley graph of an Abelian group, no additional assumptions are necessary for the assertion of Theorem \ref{thm:main} to hold.

\begin{maintheorem}
\label{thm:Cayley}
Let $H$ be a finitely generated infinite Abelian group which is not isomorphic to $\Z$.  Let $G=H(S)$ be the Cayley graph of  $H  $ w.r.t.~some finite symmetric set of generators $S$.  Let $k \in \N$. Then the following are equivalent: 
\begin{itemize}
\item[(i)]  The SRW on $G$ is recurrent.
\item[(ii)] The edge $k$-NBRW on $G$ is recurrent.   
\item[(iii)] The vertex $k$-NBRW on $G$ is recurrent. 
\end{itemize} 
\end{maintheorem}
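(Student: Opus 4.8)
The plan is to reduce Theorem~\ref{thm:Cayley} to Theorem~\ref{thm:main} by verifying that Cayley graphs of finitely generated infinite Abelian groups $H \not\cong \Z$ satisfy the relevant hypotheses, up to the finitely many ``bad'' vertices which in a vertex-transitive graph cause no trouble. The key observation is a structure theorem: $H \cong \Z^d \times F$ for some $d \geq 1$ and finite Abelian $F$, and since $H \not\cong \Z$ we have either $d \geq 2$, or $d = 1$ and $F$ nontrivial. In the transient case ($d \geq 3$) I would invoke Theorem~\ref{thm:main}; in the recurrent case ($d \in \{1,2\}$, with $F$ nontrivial when $d=1$) I would use the contrapositive of Theorem~\ref{thm:main}(iii), namely that if the $k$-NBRW were recurrent then so would be the SRW (which is false when $d = 2$) --- wait, that direction is automatic from Theorem~\ref{thm:AG} and gives nothing; the real content is the forward direction ``SRW recurrent $\Rightarrow$ $k$-NBRW recurrent,'' so I would apply Theorem~\ref{thm:main}(iii) and (iv) in the recurrent regime and verify its hypotheses there.

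Concretely, the steps are as follows. First, by the classification of finitely generated Abelian groups, write $G = H(S)$ with $H \cong \Z^d \times F$; note $G$ is vertex-transitive, connected, and of bounded degree $|S|$. Second, establish conditions $\ek$/$\vk$ and $\ekk$/$\vkk$ --- or rather their vertex-transitive analogues. For the ``no stuck'' conditions, I would argue that when $d \geq 2$ or $F$ is nontrivial, the graph $G$ contains arbitrarily long simple cycles through every vertex (indeed through every short path), using that $\Z^2$ and $\Z \times (\Z/m\Z)$ with $m \geq 2$ both admit long embedded cycles; since $G$ is vertex-transitive, one can then route a walk around such a cycle to avoid getting stuck, giving $\ekk$ (and $\vkk$ after checking that vertex simple cycles of length $\geq k+1$ through any vertex exist, which again holds once $d\geq 2$ or $F \neq \{0\}$). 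Third, verify the ``mixing within a ball'' conditions $\ek$ and $\vk$: one needs that from any two length-$k$ paths sharing a vertex, the $k$-NBRW can travel from one to the other in boundedly many steps. Here vertex-transitivity plus the presence of a long simple cycle through a fixed vertex should let me reach a fixed reference path from any path sharing a vertex with it, within a bounded number of steps depending only on $|S|$ and $k$. Fourth, apply Theorem~\ref{thm:main}(iii) for the edge $k$-NBRW and Theorem~\ref{thm:main}(iv) (which requires vertex-transitivity, satisfied here) for the vertex $k$-NBRW, and combine with the trivial direction (recurrence of a chain implies recurrence of its additive symmetrization, Theorem~\ref{thm:AG}) to close the cycle of equivalences (i)$\Leftrightarrow$(ii)$\Leftrightarrow$(iii).

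A subtlety to handle: Theorem~\ref{thm:main}(iv) only gives ``SRW transient $\Rightarrow$ vertex $k$-NBRW transient,'' i.e.\ ``vertex $k$-NBRW recurrent $\Rightarrow$ SRW recurrent''; the reverse implication ``SRW recurrent $\Rightarrow$ vertex $k$-NBRW recurrent'' is not part of Theorem~\ref{thm:main}. So for (i)$\Rightarrow$(iii) I cannot simply cite Theorem~\ref{thm:main}(iv). Instead, in the recurrent case $G$ is (roughly) a graph quasi-isometric to $\Z$ or $\Z^2$, and I would argue recurrence of the vertex $k$-NBRW directly --- e.g.\ by a comparison/flow argument showing the $k$-NBRW on such a graph cannot escape to infinity, or by noting the $k$-NBRW's additive symmetrization lives on a recurrent graph and, in this low-dimensional setting, one can also bound the drift so that the walk itself is recurrent. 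This low-dimensional recurrence argument for the vertex walk is the part I expect to be the main obstacle, since the vertex $k$-NBRW lacks the detailed-balance-type symmetry~\eqref{e:pike2} that makes the edge walk tractable; everything else is bookkeeping with the group structure and routine verification of the combinatorial conditions $\ek$, $\vk$, $\ekk$, $\vkk$.
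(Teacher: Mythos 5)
Your plan is to reduce Theorem~\ref{thm:Cayley} to Theorem~\ref{thm:main} by verifying the hypotheses $\ek,\vk,\ekk,\vkk$, but this route is blocked: the ``cannot get stuck'' conditions $\ekk,\vkk$ genuinely fail for many Cayley graphs of Abelian groups, and the paper explicitly relies on \emph{not} assuming them. Take $H=\Z^2$ with $S=\{(\pm1,0),(0,\pm1)\}$ and $k=7$: the length-$7$ path $((-1,0),(-1,1),(0,1),(1,1),(1,0),(1,-1),(0,-1),(0,0))$ is in $\Pkv$ and visits all four neighbours of $(0,0)$, so the vertex $7$-NBRW is stuck at that state and $\vkk$ fails. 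The edge conditions can likewise fail for odd-degree Cayley graphs (e.g.\ $\Z\times\Z/2\Z$ with the standard degree-$3$ generating set, once $k$ is large). Your suggestion that ``arbitrarily long simple cycles through every vertex'' would imply the no-stuck conditions is not correct --- stuck-ness is a local degree constraint, not a global cycle statement. Remark~6.1 in the paper calls this out directly: ``In the setup of Theorem~\ref{thm:Cayley} we do not assume that the $k$-NBRW cannot get stuck.'' You have also correctly identified the second gap yourself --- Theorem~\ref{thm:main}(iv) gives only one direction for the vertex walk --- but the proposed ``low-dimensional recurrence argument'' is not an argument, it is a placeholder, and it is exactly the hard part.

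The paper's proof takes a different route that sidesteps both problems. Fix $s\in S$ of infinite order, set $\alpha_h:=(h,h+s,\ldots,h+ks)$, and let $\mathcal A:=\{\alpha_h,\alpha_h^{\rr}:h\in H\}$. Rather than verifying $\ekk/\vkk$, one passes to the \emph{induced chain} of the $k$-NBRW on $\mathcal A$. The crucial point (Lemma~\ref{lem:Csym}) is that this induced chain satisfies $P_{\mathcal A}(a,b)=P_{\mathcal A}(b^{\rr},a^{\rr})$ with the counting measure stationary, and this symmetry comes purely from the group structure: translation invariance together with the automorphism $x\mapsto -x$ (which reverses each $\alpha_h$ up to translation). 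This identity plays the role that \eqref{e:pike2} plays in Theorem~\ref{thm:main}(iii), but it holds for the \emph{vertex} $k$-NBRW too and it holds even when the walk may get stuck, because the automorphism argument does not care whether a given trajectory is ``forced'' at some step. From there the paper runs the same random-time-change construction as in Theorem~\ref{thm:pNBRW}: since $P_{\mathcal A}(a,a^{\rr})=p>0$ uniformly (again by translation invariance), one builds the reversible auxiliary chain $\mathbf Q$ on $\mathcal B=\{\{a,a^{\rr}\}\}$, identifies $\mathcal B$ with $H$ via $\{\alpha_h,\alpha_h^{\rr}\}\mapsto h$, and compares with SRW on $G$ by flows; the identification with $H$ also replaces your step of establishing conditions $\ek,\vk$. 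So: same random-time-change machinery as elsewhere in the paper, but the input symmetry is manufactured from the Abelian structure rather than from hypotheses on the graph, and that is precisely what lets the theorem avoid the hypotheses of Theorem~\ref{thm:main} that your reduction cannot supply.
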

\begin{remark}
While the assertion of Theorem \ref{thm:Cayley} clearly fails when $H = \Z $ and $S=\{ \pm 1 \} $, it is not hard to modify the proof of Theorem \ref{thm:Cayley}  to show that if  $H = \Z $ and $|S|>2 $  then for all $k \in \N$ the edge $k$-NBRW is recurrent. 
\end{remark}
\begin{remark}
It is classical that whether SRW on $H(S)$ is recurrent or not does not depend on $S$ (that is, if $S_1,S_2$ are two finite symmetric sets of generators of $H$ then the SRW on $H(S_1)$ is recurrent iff the SRW on $H(S_2)$ is recurrent). It follows from Theorem \ref{thm:Cayley} that if $H \neq \Z $ is Abelian, the same is true for the edge and vertex  $k$-NBRWs for all $k \in \N$. 
\end{remark}
We now define a variant of the  NBRW for which condition $(2)$ is not required for the assertion of Part (ii) of Theorem \ref{thm:main} to hold.  
\begin{definition}
\label{d:BRW}
Let $G=(V,E)$ be a locally-finite connected graph. Fix some $p \in (0,1)$. Consider a walk on the set of directed edges $\dE $ whose transition kernel is given by
\[B_{p}((x,y),(z,w) ):=(1-p)B\left((x,y),(z,w) \right)+p \cdot \Ind{z=y,w=x } , \]
where $B$ is the transition kernel of the NBRW on $G$ (i.e., at each step with probability $p$ the walk backtracks and otherwise it evolves  like a NBRW). We call this walk a $p$-\emph{BRW}.
\end{definition}
While the following theorem is of self-interest, its proof  will be used to highlight the key ideas behind  the proofs of Theorems  \ref{thm:main} and  \ref{thm:Cayley}  in a simpler setup. 
\begin{maintheorem}
\label{thm:pNBRW} 
Let $G=(V,E)$ be a connected graph of bounded degree. Then for every $p \in (0,1)$ the $p$-BRW on $G$ is transient iff the SRW on $G$  is transient.
\end{maintheorem}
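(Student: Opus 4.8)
The plan is to exhibit an explicit reversible Markov chain obtained from the $p$-BRW by a ``nice'' random time change, and then invoke the comparison principle that the additive symmetrization of a recurrent chain is recurrent (Theorem \ref{thm:AG}) together with a crude comparison in the reverse direction. The $p$-BRW lives on the directed edge set $\dE$; its transition kernel $B_p$ is a convex combination of the non-backtracking kernel $B$ and the ``flip'' operator. First I would look for a stationary measure. Since every directed edge $(x,y)$ has the same number $\deg(y)-1$ of non-backtracking continuations (ignoring the measure-zero issue of leaves, which does not arise under bounded degree unless $\deg(y)=1$, handled separately), a natural guess is $\mu(x,y) \propto \deg(y)-1$, or more robustly a measure proportional to something like $w(x,y) = \deg(x)\deg(y)$ or a product weight making the chain reversible \emph{up to the flip}. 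The key structural fact I would aim to establish is an identity of the form $\mu(x,y) B_p((x,y),(z,w)) = \mu(w,z) B_p((w,z),(y,x))$, i.e.\ the $p$-BRW is reversible with respect to the measure $\mu$ \emph{after composing with edge reversal} — this is exactly the symmetry relation \eqref{e:pike2} alluded to for the edge $k$-NBRW. From such an identity one deduces that $\mu$ is stationary and that the chain, viewed on pairs $\{(x,y),(y,x)\}$ of oppositely-oriented edges, or via the involution $\iota(x,y)=(y,x)$, is ``dynamically reversible''.

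Next I would push through the recurrence/transience equivalence in both directions. For the direction ``SRW recurrent $\Rightarrow$ $p$-BRW recurrent'': the $p$-BRW projects to the vertex walk $X_t$ (second coordinate), and because at each step it backtracks with probability $p$ bounded away from $0$, the projected walk is, in a comparison sense, dominated by SRW run at a linearly-rescaled clock — more precisely I would construct a coupling or a Dirichlet-form comparison showing the $p$-BRW on $\dE$ has conductances comparable (up to bounded-degree constants and the constant $p$) to SRW conductances on a graph quasi-isometric to $G$, so transience of one forces transience of the other. Concretely, since $B_p$ is reversible-up-to-$\iota$ with respect to $\mu$, its additive symmetrization $\tfrac12(B_p + B_p^*)$ (where $B_p^*$ is the $\mu$-adjoint) is an honest reversible chain on $\dE$, and I would identify its Dirichlet form with, up to constants depending only on $\sup_v \deg v$ and on $p$, the Dirichlet form of SRW on the line graph (or subdivision) of $G$; bounded degree makes the line graph quasi-isometric to $G$, and recurrence is a quasi-isometry invariant for bounded-geometry graphs (Kanai-type stability). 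Hence recurrence of SRW on $G$ $\Leftrightarrow$ recurrence of the symmetrized $p$-BRW, and by Theorem \ref{thm:AG} recurrence of $B_p$ implies recurrence of its symmetrization.

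For the converse, ``$p$-BRW recurrent $\Rightarrow$ SRW recurrent'', Theorem \ref{thm:AG} gives recurrence of the symmetrization directly, and then the quasi-isometry comparison above transfers this back to SRW on $G$ — here is where I would need the presence of the flip term: with $p>0$ the flip makes $B_p^*$ easy to compute (the $\mu$-adjoint of the flip is again the flip, and the adjoint of $B$ is the NBRW run ``backwards'', still a bona fide sub-stochastic kernel supported on non-backtracking steps), so the symmetrized kernel genuinely sees every adjacent pair of directed edges with comparable weight, which is what makes it comparable to SRW on the subdivision graph. Finally I would check the bookkeeping that ``recurrence of the $p$-BRW as a walk on vertices'' (the notion in the statement) coincides with recurrence of the Markov chain on $\dE$ — this follows because the chain on $\dE$ is irreducible on a single communicating class containing all directed edges (connectivity of $G$ plus the flip) and bounded degree gives a bounded-to-one projection to $V$.

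The main obstacle I expect is \textbf{pinning down the correct stationary measure and the reversibility-up-to-flip identity} in full generality, including vertices of degree $1$ or $2$ where $B$ degenerates (the forced-backtracking term in \eqref{e:B}); the bounded-degree and $p>0$ hypotheses should absorb these, but verifying the detailed-balance-type relation \eqref{e:pike2} and then carefully matching Dirichlet forms with SRW on the subdivision graph — with explicit bounded constants — is the technical heart. Once that identity is in hand, the recurrence equivalence is a routine application of Theorem \ref{thm:AG} and the quasi-isometry invariance of recurrence.
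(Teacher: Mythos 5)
Your proposal misses the key mechanism and, as written, would only prove one direction of the equivalence. Theorem \ref{thm:AG} is a one-way implication: recurrence of the original non-reversible chain forces recurrence of its additive symmetrization, but \emph{not} conversely. Your argument chain is ``$B_p$ recurrent $\Rightarrow$ (Thm.~\ref{thm:AG}) symmetrization recurrent $\Leftrightarrow$ (Dirichlet form / quasi-isometry) SRW recurrent,'' which yields only ``$p$-BRW recurrent $\Rightarrow$ SRW recurrent.'' You label both of your paragraphs as handling opposite directions, but both rely on Theorem \ref{thm:AG} in the same way and therefore prove the same implication twice. There is no general converse to Theorem \ref{thm:AG} — as the paper itself emphasizes, finding a mechanism to push recurrence \emph{from} a reversible chain \emph{to} the non-reversible one it symmetrizes is precisely the difficulty, and no Dirichlet-form or quasi-isometry argument applied to the symmetrization alone can supply it, since Dirichlet forms see only the symmetric part.

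The paper's proof avoids this by never passing through Theorem \ref{thm:AG} at all. Instead it constructs an auxiliary chain $\mathbf{Q}$ on undirected edges by a random time change: look at the lazy $p$-BRW at the successive times at which the proposed candidate move is the reversal of the current directed edge (these occur at rate at least $p/2$), and note that at such a time, independently of the past, the chain is \emph{equally likely} to be at $(x,y)$ or $(y,x)$ because the outcome is decided by a fair coin that either accepts the reversal or stays put. This exact-uniformity over orientation pairs, combined with the identity $B^n(a,b)=B^n(b^{\rr},a^{\rr})$, makes the transition kernel $W$ of $\mathbf{Q}$ on $E$ literally symmetric, hence reversible with respect to the counting measure. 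Crucially, the equivalence ``$p$-BRW recurrent iff $\mathbf{Q}$ recurrent'' is established by a direct coupling argument (each return of one chain has a uniformly positive probability of being a return of the other), which works in \emph{both} directions — this is what replaces the converse to Theorem \ref{thm:AG} that you would need. The comparison of $\mathbf{Q}$ with SRW on the line graph via flows of bounded congestion then finishes the proof. Your remark that the chain is ``reversible up to the flip'' is the right structural observation, but without the random-time-change construction it does not by itself produce a genuinely reversible chain whose recurrence is equivalent to that of $B_p$.

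A secondary point: your proposed stationary measures $\mu(x,y)\propto\deg(y)-1$ and $\mu(x,y)\propto\deg(x)\deg(y)$ are not stationary for $B$ (nor $B_p$) in general. The correct stationary measure is simply the counting measure on $\dE$: since $B((x,y),(y,w))=1/(\deg(y)-1)$ for the $\deg(y)-1$ admissible $w$, one has $\sum_{x\sim y,\,x\neq w}1/(\deg(y)-1)=1$, and the symmetry $B(a,b)=B(b^{\rr},a^{\rr})$ holds with respect to this uniform measure. This simplification also explains why the auxiliary chain $\mathbf{Q}$ ends up reversible with respect to the counting measure on $E$ rather than something degree-weighted.
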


\subsection{Open problems and remarks}
\label{s:OR}
Recall that two graphs $G_1,G_2$ are roughly-isometric if one can be embedded in the other in an ``almost surjective" manner while distorting distances by at most a constant additive and multiplicative factor (see Definition \ref{def: RI} for a precise formulation). Recall further that the relation of ``being roughly-isometric" is an equivalence relation which preserves  recurrence/transience (see Fact \ref{f:RIinvariance}). Moreover, it is not hard to verify that condition $(2)$ is invariant under rough-isometries.  Hence the following corollary is an immediate consequence of Theorems \ref{thm:main} and \ref{thm:pNBRW}. 
\begin{corollary} Let $G$ and $H$ be two connected roughly-isometric graphs of bounded degree. Then the following hold:
\begin{itemize}
\item[(i)]  If the SRW on $G$ is transient then the NBRW on $G$ and on $H$ are both transient.
\item[(ii)] If condition $(2)$ holds for $G$ then the NBRW on $G$ is transient iff the NBRW on $H$ is transient.
\item[(iii)] For every $p \in (0,1)$ the $p$-BRW on $G$ is transient iff the $p$-BRW on $H$ is transient.
\item[(iv)] If $G$ and $H$ both satisfy conditions  $\ek$ and $\ekk$, then the edge $k$-NBRW on $G$ is transient iff the edge $k$-NBRW on $H$ is transient. 
\end{itemize}  
\end{corollary}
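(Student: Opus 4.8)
The plan is to obtain all four parts by combining Theorems~\ref{thm:main} and~\ref{thm:pNBRW} with the two facts recorded in the paragraph preceding the statement: that transience of the SRW is a rough-isometry invariant (Fact~\ref{f:RIinvariance}), and that condition~$(2)$ is a rough-isometry invariant. I will also invoke the Remark that, for each of the walks appearing in Theorems~\ref{thm:main}--\ref{thm:pNBRW}, ``transient'' is exactly the negation of ``recurrent'', so that the ``transient iff transient'' formulations are unambiguous. Granting these inputs, the corollary reduces to a short chain of implications and equivalences; no new idea is needed, which is why it is stated as a corollary, and I would simply spell the chain out.

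First I would dispose of the parts that do not mention condition~$(2)$. For part~(i): if the SRW on $G$ is transient, then Theorem~\ref{thm:main}(i) makes the NBRW on $G$ transient, while Fact~\ref{f:RIinvariance} makes the SRW on $H$ transient, so a second application of Theorem~\ref{thm:main}(i) (now to $H$) makes the NBRW on $H$ transient. For part~(iii): for a fixed $p\in(0,1)$, Theorem~\ref{thm:pNBRW} gives that the $p$-BRW on $G$ (resp.\ on $H$) is transient iff the SRW on $G$ (resp.\ on $H$) is transient, and Fact~\ref{f:RIinvariance} identifies the two SRW conditions; concatenating gives~(iii). For part~(iv): the hypothesis supplies conditions $\ek$ and $\ekk$ for both $G$ and $H$, so Theorem~\ref{thm:main}(iii) applies to each and yields that the edge $k$-NBRW on $G$ (resp.\ on $H$) is transient iff the SRW on $G$ (resp.\ on $H$) is transient, and once more Fact~\ref{f:RIinvariance} closes the loop.

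For part~(ii) I would first use the rough-isometry invariance of condition~$(2)$ to pass from ``$(2)$ holds for $G$'' to ``$(2)$ holds for $H$''. Then Theorem~\ref{thm:main}(ii), applied separately to $G$ and to $H$, gives ``NBRW transient iff SRW transient'' for each of the two graphs, and Fact~\ref{f:RIinvariance} gives ``SRW on $G$ transient iff SRW on $H$ transient''; concatenating these three equivalences proves~(ii). The only step here that is an argument rather than a quotation is the invariance of condition~$(2)$ under rough-isometries, and this is the mild obstacle: as indicated in the text it is not hard to check directly, the idea being that a cycle sitting in a bounded-radius ball of one graph can be transported, using the rough-isometry between $G$ and $H$, to a cycle sitting in a bounded-radius ball of the other, with the radius enlarged only by the fixed distortion constants. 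Everything else in the proof is purely formal.
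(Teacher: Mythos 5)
Your proof is correct and matches the paper's approach: the corollary is deduced exactly by chaining Theorems~\ref{thm:main} and~\ref{thm:pNBRW} with the rough-isometry invariance of SRW transience (Fact~\ref{f:RIinvariance}) and the rough-isometry invariance of condition~$(2)$. The only substantive observation --- that condition~$(2)$ is preserved under rough-isometries, which you justify briefly --- is precisely what the paper flags as the one point requiring a check.
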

We leave the following as open problems.
\begin{open}
Let $G$ be a connected  graph of bounded degree which satisfies conditions  $\vk$ and $\vkk$. Is it the case that the SRW on $G$ is transient iff the vertex $k$-NBRW is transient?
\end{open}
\begin{open}
Does the assertion of Theorem \ref{thm:Cayley} remain valid when $G$ is only assumed to be vertex-transitive?
\end{open}
\begin{open}
Let $G$ be a connected  graph of bounded degree satisfying condition $(1)$. Is it the case that the SRW on $G$ is transient iff the NBRW on $G$ is transient? \end{open}
The following example demonstrates that if condition $(1)$ fails then it is possible that the SRW is recurrent while the NBRW is transient.\begin{example}
\label{ex:long2paths}
Let $H=(V,E)$ be some transient graph of bounded degree. Fix some $o \in V$. Let $\Pi_n$ be the collection of all edges with one end-point of distance $n-1$ from $o$ and the other of distance $n$.   Let $G$ be the graph obtained from $H$ by replacing for all $n \in \N $ each edge in $\Pi_n$ by a path of length $|\Pi_n| $. 

Observe that when the NBRW on $G$ enters one of its 2-paths  it must reach the other end of that 2-path. Thus if we observe the NBRW on $G$ only when it visits sites belonging to $V$ we obtain a realization of the NBRW on $H$. Since the SRW  on $H$ is transient, by Theorem \ref{thm:main} so is the NBRW on $H$ and  hence the NBRW on $G$ must  be transient as well. 

To see that the SRW on $G$ is recurrent, first apply a standard network reduction,  replacing each 2-path added to $G$ by a single edge of edge-weight  which equals the inverse of the length of that path (obtaining a network on $H$ in which each edge in $\Pi_n$  has edge-weight $1/|\Pi_n|$). Then apply the Nash-Williams criterion (e.g.~\cite[(2.13)]{lyons}) to the cut sets $\Pi_1,\Pi_2,\ldots $.  
\end{example}
To end the introduction, the author would like to thank Itai Benjamini for suggesting
most of the problems considered in this paper (private communication). 

\subsection{Organization of the paper}
In \S\ref{s:Markov} we present some background on  Markov chains.  In \S\ref{s:2} we introduce additional notation and describe a representation of the $k$-NBRW as a Markov chain.  In \S\ref{s:comparison} we review a variety of comparison techniques. In section \ref{s:SOLG} we apply some of these techniques in order to compare SRW with the additive symmetrization of the $k$-NBRW. In \S\ref{s:overview} we give an overview of the main ideas behind the proofs of our main results and prove Theorem \ref{thm:pNBRW}. In \S\ref{s:findingpi} we find a stationary measure for the edge $k$-NBRW (under condition $\ekk$). In \S\ref{s:proofthm1}-\ref{s:proofthm2} we prove Theorems \ref{thm:main}-\ref{thm:Cayley}, respectively.

\section{Some definitions related to Markov chains}
\label{s:Markov}

We now recall some basic definitions related to Markov chains. For further background on Markov chains see e.g.~\cite{aldous,levin,lyons}. We call  $(V, E,(c_{e})_{e
\in E})$ a \emph{network}  if $(V,E)$ is a graph (we allow it to contain loops, i.e.~$E \subseteq \{\{ u,v\}:u,v \in V \} $) 
and $(c_{e})_{e
\in E}$ are symmetric edge weights (i.e., $c_{u, v}=c_{v,u}>0$ for every $\{u,v \} \in E$ and $c_{u,v}=0$ if $\{u,v\} \notin E$) such that  $c_v:=\sum_{w} c_{v, w}< \infty $ for all $v \in V $. We say the network is connected if the graph  $(V,E)$ is connected.  The  (weighted) nearest neighbor   random walk corresponding to    $(V, E,(c_{e})_{e
\in E})$ repeatedly does the following: when the current state is $v\in V$, the walk will move to vertex $u$  with probability $c_{u, v}/c_{v}$. 
The  choice  $c_{e}=\Ind{e \in E }$ corresponds to SRW  on $(V,E)$. We denote this network by $(V,E,(1)_{e \in E})$. 

\medskip

Consider an ergodic Markov chain  $(X_k)_{k=0}^{\infty}$  on a countable state space $V$ with a stationary  measure $\pi$ and transition probabilities given by $P$. We say that $P$ is \emph{reversible} w.r.t.\ $\pi$ if $\pi(x)P(x,y)=\pi(y)P(y,x)$ for all $x,y \in V$. Observe that a weighted nearest-neighbor random walk is reversible w.r.t.~the measure $\pi$ given by $\pi(v):=c_v$ and hence $\pi$ is stationary for the walk. Conversely, every reversible chain can be presented as a network with weights $c_{x,y}=\pi(x)P(x,y)$.
 The \emph{time-reversal} of $P$ (w.r.t.\ $\pi$), denoted by $P^*$, is given by $\pi(x)P^*(x,y):=\pi(y)P(y,x)$ and its \emph{additive symmetrization} (w.r.t.\ $\pi$) is a reversible Markov chain (w.r.t.\ $\pi$) whose transition probabilities are given by $S:=\frac{1}{2}(P+P^*)$.\footnote{Note that when the chain is recurrent, $\pi$ is unique up to a constant factor, and so $P_*$ and $S$ are uniquely defined. This may fail when the chain is transient! A transient Markov chain may fail to have a stationary measure and may have two different stationary measures which are not constant multiples of one another.  Consider a random walk on $\Z$ with $P(i,i+1)=p=1-P(i,i-1)$ for all $i \in \Z$, with $p \in ( 1/2,1) $. Note that $P$ is reversible w.r.t.\ $\pi(i)=(p/(1-p))^i $ and is stationary also w.r.t.\ the counting measure $\mu$ on $\Z$. The additive symmetrization w.r.t.\ $\pi$ is again $P$ while w.r.t.\ $\mu$ is SRW on $\Z$.  }

The \emph{hitting time} of a set $A \subset V$ is $T_A:=\inf \{t \ge 0 :X_t \in A \}$. Similarly, let $T_A^+:=\inf \{t \ge 1 : X_t \in A \} $. When $A=\{x\}$ is a singleton, we write $T_x$ and $T_x^+$ instead of $T_{\{x\}}$ and $T_{\{x\}}^+$. For a set $A \subset V $ such that $\Pr_v[T_A^+<\infty]=1 $ for all $v \in V$ (under irreducibility we may replace `for all' by `for some') the \emph{induced chain on} $A  $ is a Markov chain with state space $A$ and transition probabilities given by $Q_A(a,b):=\Pr_a[X_{T_{A}^+}=b]$, where  $\Pr_a$ denotes the law of the entire chain, started from vertex $u $. It is not hard to verify that if the original chain is reversible w.r.t.\ $\pi$, then  the induced chain on $A$ is reversible w.r.t.\ to the restriction of $\pi$ to $A$ (e.g.\ \cite[p. 186]{levin}). In fact, if the original chain is recurrent then this holds even without reversibility. Indeed in this case the stationary measure $\pi$ is unique up to a multiplication by a constant factor and is given by $\pi(y)= \E_{x}[|\{t \in [ 0,X_{T_x^+}) :X_t =y \} |] $ (where $x$ is arbitrary). Taking $x,y \in A$ we see that the r.h.s.\ is the same as the corresponding expectation for the induced chain on $A$, and so the stationary measure of the induced chain on $A$ is simply the restriction of $\pi$ to $A$ as claimed.

We now assume that $\sum_{v \in V} \pi(v)=\infty$. The $\ell_2$ and  $\ell_{\infty}$ norms of $f \in \R^V  $ are given by $\|f \|_2:=\sqrt{\langle f,f \rangle_{\pi}} $ and  $\|f \|_\infty:=\sup_{v \in V }|f(v)| $, where   $\langle f,g \rangle_{\pi}:= \sum_{v \in V}\pi(v) f(v) g(v)$. The space of $\ell_2$ functions is given by $\mathcal{H}:=L_2(V,\pi)=\{f :\R^V:\|f \|_2< \infty \}$.  Then $P$ (and similarly, also $S$ and  $P^*$)  defines a linear operator on $\mathcal{H} $ via $Pf(x):= \sum_{y \in V} P(x,y)f(y)=\E_x[f(X_1)]$. Note that $P^*$ is the dual of $P$ and that $S$ is self-adjoint. The \emph{Dirichlet form} $\mathcal{E}_P(\cdot,\cdot):\mathcal{H}^2 \to \mathbb{R}$ corresponding to $P$    is  $\mathcal{E}_{P}(f,g):=\langle (I-P)f,g \rangle_{\pi}$. Note that \begin{equation}
\label{e:Dirichlet}
\mathcal{E}_{P}(f,f)=\mathcal{E}_{P^*}(f,f)=\mathcal{E}_{S}(f,f)=\frac{1}{2} \sum_{x,y}\pi(x)S(x,y)(f(x)-f(y))^2. \end{equation}

\section{Notation and a construction of the $k$-NBRW as a Markov chain}
\label{s:2}
\subsection{Notation}
\label{s:notation}
Fix some graph $G=(V,E)$. Here we allow $G$ to contain loops (however we do not allow it to have multiple edges between a pair of vertices). Recall that we write $u \sim v$ iff $\{u,v\} \in E$. We denote a directed edge from $x$ to $y$ by $(x,y)$ and an undirected edge between $x$ and $y$ by $\{x,y\}$. We denote the set of directed edges by $\dE:=\{(x,y):\{x,y\} \in E \}$. For $\de=(x,y) \in \overrightarrow{E}  $ we denote its \emph{reversal}, tail and head (respectively) by  $\de^{\rr}:=(y,x)$, $\de_{-}:=x$ and $\de_+:=y$. 
Let $k \in \N$. We denote the set $\{1,2,\ldots,k \}$ by $[k]$.    Recall that we denote the collection of all paths of length $ k$ in a graph $G$  by
\[\mathcal{P}_{k}:=\{(x_i)_{i=0}^{k} \in V^{k+1} : \{x_{i} , x_{i-1}\} \in E \text{ for all }i \in [  k] \}. \]
Let $\gamma=(\gamma_0,\ldots,\gamma_k) \in \PP_k $. We denote the $i$th coordinate of $\gamma$ by  $\gamma_i$. The \emph{reversal} of $\gamma$ is given by $\gamma^{\mathrm{r}}:=(\gamma_k,\ldots,\gamma_0)$ (that is, $\gamma_i^{\mathrm{r}} :=\gamma_{k-i}$ for all $i$). Its length is denoted by $|\gamma |$ (where $|\gamma|+1 $ is the number of coordinates in $\gamma$). For $e \in E$ (respectively, $v \in V$) we write $e \in \gamma $ (respectively, $v \in \gamma$) if $e \in \gamma^{\mathrm{e}}:=\{\{\gamma_{i-1},\gamma_i \}:  i \in [k] \}$ (respectively, $v \in \gamma^{\mathrm{v}}:=\{\gamma_i: 0 \le i \le k \}$). For  $\gamma =(\gamma_0,\ldots,\gamma_k) \in \PP_k$ and  $v \sim \gamma_k $   we write $v \sime \gamma$ (respectively, $v \simv \gamma $) if $\{\gamma_k,v \} \notin \gamma $ (respectively, $v \notin \gamma $). We also write $\gamma(v):=(\gamma_1,\ldots,\gamma_{k},v) $ (note that $\gamma(v) \in \PP_k $ and $\gamma(v)_i:=\gamma_{i+1}$ for  $0 \le i <k$). As in \eqref{e:1.2} let
\[\Pke:=\{ \gamma \in \PP_k: | \gamma^{\mathrm{e}} |=k \} \quad \text{and} \quad \Pkv:=\{ \gamma \in \PP_k: | \gamma^{\mathrm{v}} |=k+1 \}. \]
Observe that  $\gamma \in \Pke $ (respectively, $\gamma \in \Pkv$) iff $\gamma^{\rr} \in \Pke $ (respectively, $\gamma^{\rr} \in \Pkv$). 
For $\gamma \in \PP_k$ let $A(\gamma):=\{\gamma(v):v \sim \gamma_k \}$,
\[ N^{\mathrm{e}}(\gamma):=\{\gamma(v): v \sime \gamma \}  \quad \text{and} \quad N^{\mathrm{v}}(\gamma):=\{\gamma(v): v \simv \gamma \}. \]
We denote the transition kernel of the edge (respectively, vertex) $k$-NBRW by $P_{k,\ee} $ (respectively, $P_{k,\vv}$).
For every set $A$ we denote the counting measure on $A$ (i.e., the measure which assigns to every $a \in A $ mass 1) by $\pi_{A}$.
\subsection{A representation of the $k$-NBRW as a Markov chain}
\label{s:MC}
Let $k \ge 1$. In this section we construct the edge and vertex $k$-NBRWs as Markov chains. 
 We first construct the edge and vertex $k$-NBRWs  as Markov chains on $\PP_k$ and then later we restrict the state space to include only the ``relevant" paths.
Given that the current location of the walk is $\gamma$ the edge  (respectively, vertex) $k$-NBRW evolves according to the following rule: If $N^{\mathrm{e}}(\gamma) \neq \eset $ (respectively, $N^{\mathrm{v}}(\gamma) \neq \eset $),  the walk moves to some $\gamma' \in N^{\mathrm{e}}(\gamma) $ (respectively, $\gamma' \in N^{\mathrm{v}}(\gamma)  $) chosen from the uniform distribution. Otherwise, it moves to some $\gamma' \in A(\gamma) $ chosen from the uniform distribution. 

We take the state space of the edge (respectively, vertex) $k$-NBRW to be  \[\Omega_k^{\ee}:=\{\gamma' \in \PP_k:\exists \, \gamma \in \Pke \text{ and }n \ge 0 \text{ such that }P_{k,\ee}^n(\gamma,\gamma')>0 \}\] (respectively, $\Omega_k^{\vv}:=\{\gamma' \in \PP_k:\exists \, \gamma \in \Pkv \text{ and }n \ge 0 \text{ such that }P_{k,\vv}^n(\gamma,\gamma')>0 \}$),  

the collection of all states which are accessible starting from some state in $\Pke $ (respectively, $\Pkv$).

\section{Comparison Techniques}
\label{s:comparison}
In this section we present various techniques which allow one to deduce that a certain Markov chain is recurrent (or transient) if some other chain is recurrent (or transient).

The \emph{capacity} of a finite  set $A$ (w.r.t.\ a certain Markov chain and some stationary measure  $\pi$) is defined as \[\mathrm{Cap}(A):= \sum_{a\in A}\pi(a) \Pr_a[T_{A}^{+}=\infty] .\]
\subsection{Comparison of two reversible Markov chains}
\label{s:comrev}
Under reversibility, by the Dirichlet principle we have that for a finite set $A$
\begin{equation}
\label{e:DP}
\mathrm{Cap}(A)=\inf_{f \in \R^V :\, f \upharpoonright A \equiv 1, \, 0 \le f \le 1, \, \mathrm{supp}(f) \text{ is finite}} \mathcal{E}_{P}(f,f),
\end{equation}
where $f \upharpoonright A$ is the restriction of $f$ to $A$, $\mathrm{supp}(f):=\{v \in V:f(v) \neq 0 \}$ is the support of $f$ and $\mathcal{E}_{P}(f,f)$ is as in \eqref{e:Dirichlet}. The following standard lemma is an immediate consequence of \eqref{e:DP}.
\begin{lemma}
\label{lem:comparison1}
Let $P,Q$ be two transition kernels on the same (countable) state  space $\Omega$. Assume that $P$ and $Q$ are reversible w.r.t.~$\pi_P$ and $\pi_Q$, respectively. Assume further that for all $f : \Omega \to \R $ we have that $\mathcal{E}_{P}(f,f) \le M  \mathcal{E}_{Q}(f,f) $. Then for all finite $A \subset \Omega$ we have that \[\mathrm{Cap}_P(A) \le M \mathrm{Cap}_{Q}(A), \]
where $\mathrm{Cap}_P(A)  $ and $\mathrm{Cap}_Q(A)$ are the capacities of $A$ w.r.t.~$P$ and $Q$, respectively. In particular, if $Q$ is recurrent then so is $P$. 
\end{lemma}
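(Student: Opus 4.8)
The plan is to read the claim off directly from the Dirichlet principle \eqref{e:DP}, which is the only nontrivial ingredient. First I would fix a finite $A \subset \Omega$ and observe that the admissible class in the variational problem \eqref{e:DP} --- the set of $f : \Omega \to \R$ with $f \upharpoonright A \equiv 1$, $0 \le f \le 1$ and $\mathrm{supp}(f)$ finite --- is literally the same for $P$ and for $Q$, since it refers only to the restriction, range and support of $f$, not to the dynamics. Given $\varepsilon > 0$, I would choose $f$ in this class, via \eqref{e:DP} applied to $Q$, with $\mathcal{E}_Q(f,f) \le \mathrm{Cap}_Q(A) + \varepsilon$. Then $f$ is a legitimate competitor in \eqref{e:DP} for $P$, so using the hypothesis $\mathcal{E}_P(f,f) \le M\,\mathcal{E}_Q(f,f)$,
\[
\mathrm{Cap}_P(A) \;\le\; \mathcal{E}_P(f,f) \;\le\; M\,\mathcal{E}_Q(f,f) \;\le\; M\bigl(\mathrm{Cap}_Q(A) + \varepsilon\bigr),
\]
and letting $\varepsilon \downarrow 0$ gives $\mathrm{Cap}_P(A) \le M\,\mathrm{Cap}_Q(A)$.

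For the recurrence statement I would use that a state $x$ is recurrent for a chain iff $\mathrm{Cap}(\{x\}) = \pi(x)\,\Pr_x[T_x^+ = \infty] = 0$. If $Q$ is recurrent, then $\mathrm{Cap}_Q(\{x\}) = 0$ for every $x$, so by the inequality just proved $\mathrm{Cap}_P(\{x\}) \le M \cdot 0 = 0$; since $\pi_P(x) > 0$ this forces $\Pr_x[T_x^+ = \infty] = 0$ for $P$, i.e.\ $x$ is recurrent for $P$, and as $x$ is arbitrary $P$ is recurrent.

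There is essentially no obstacle beyond this bookkeeping; the one point worth a sentence is that the two capacities are taken with respect to the possibly distinct stationary measures $\pi_P$ and $\pi_Q$, yet no reconciliation of these measures is needed --- \eqref{e:DP} expresses each capacity purely through its own Dirichlet form, which by \eqref{e:Dirichlet} already carries the corresponding weights, and the hypothesis $\mathcal{E}_P \le M\,\mathcal{E}_Q$ is precisely the comparison of those forms. So the ``hard part'' is merely having \eqref{e:DP} available; everything else is the two-line manipulation above.
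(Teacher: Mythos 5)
Your proof is correct and is exactly the argument the paper alludes to when it calls the lemma ``an immediate consequence of \eqref{e:DP}'': the admissible class in the Dirichlet principle is dynamics-independent, so a near-optimal competitor for $Q$ is a competitor for $P$, and the recurrence conclusion follows from $\mathrm{Cap}(\{x\})=0$ characterizing recurrence. Nothing to add.
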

\begin{definition}
\label{d:flow}
Let $G:=(V,E,(c_e)_{e \in E})$ and  $G':=(V,E',(c'_e)_{e \in E'})$  be two connected networks.   For every $(x,y) \in \dE' $ let $\PP_{x,y}(G)$ be the collection of all non-empty (oriented) paths from $x$ to $y$ in $G$.  Denote the number of times the (directed) edge $(u,v)$ appears in a path $\gamma$  by \[r((u,v),\gamma):=|\{i: \gamma_i=u,\gamma_{i+1}=v  \}| .\]  

We say that $\Phi:=(\Phi_{x,y})_{(x,y) \in \dE' }$ is a $G$-$G'$ \emph{flow} (or $P$-$P'$ \emph{flow}, where $P$ and $P'$ are the transition kernels of $G$ and $G'$, respectively) if for every $\{x,y\} \in E'$ the following hold:
\begin{itemize}
\item[(i)]  $\Phi_{x,y}$ is a map from $\PP_{x,y}(G)$ to $[0,c_{x,y}']$, \item[(ii)] $\sum_{\gamma}\Phi_{x,y}(\gamma)=c_{x,y}' $ and 
\item[(iii)]  $\Phi_{x,y}(\gamma)=\Phi_{y,x}(\gamma^{\rr}) $ for every $\gamma$.  \end{itemize}
The \emph{congestion} of $(u,v) \in \dE $ w.r.t.~$\Phi $ is defined as
\[A_{u,v}(\Phi):=\frac{1}{c_{u,v}}\sum_{(x,y) \in \dE' }\sum_{\gamma \in \PP_{x,y}(G)}r((u,v),\gamma)|\gamma|\Phi_{x,y}(\gamma). \]
The \emph{congestion} of $\Phi$ is defined as $A(\Phi):=\sup \{A_{u,v}(\Phi):(u,v) \in \dE \} $.
\end{definition}
The following lemma provides a standard technique for utilizing Lemma \ref{lem:comparison1}. The lemma is essentially Theorem 2.3 from \cite{comparison}. While the formulation in \cite{comparison} is slightly different, the difference is non-essential (cf.~Theorem 1  in \cite{comparison2} and the remark thereafter).  
\begin{lemma}[]
\label{lem:comparison2}
In the setup of Definition \ref{d:flow} Let $P$ and $P'$ be two reversible transition kernels on the same state space. Let $\Phi$ be a $P$-$P'$ \emph{flow}. Then $\mathcal{E}_{P'}(f) \le A(\Phi) \mathcal{E}_{P}(f) $ for all $f \in \R^V$.
\end{lemma}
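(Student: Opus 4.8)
The plan is to use the Dirichlet form identity \eqref{e:Dirichlet} to reduce the claim $\mathcal{E}_{P'}(f,f) \le A(\Phi)\mathcal{E}_{P}(f,f)$ to a purely combinatorial bound expressing each ``elementary'' energy term $c'_{x,y}(f(x)-f(y))^2$ of $P'$ as a controlled sum of elementary energy terms $c_{u,v}(f(u)-f(v))^2$ of $P$. Writing $\{x,y\}\in E'$ with a chosen orientation, the flow $\Phi_{x,y}$ distributes the weight $c'_{x,y}$ over the paths $\gamma\in\PP_{x,y}(G)$ via $\sum_\gamma \Phi_{x,y}(\gamma)=c'_{x,y}$, so
\[
c'_{x,y}(f(x)-f(y))^2 = \sum_{\gamma\in\PP_{x,y}(G)}\Phi_{x,y}(\gamma)\,(f(x)-f(y))^2 .
\]
For each path $\gamma=(\gamma_0,\dots,\gamma_{|\gamma|})$ from $x$ to $y$, telescoping gives $f(x)-f(y)=-\sum_{i}(f(\gamma_{i})-f(\gamma_{i+1}))$, and the Cauchy–Schwarz inequality (in the form $(\sum_{i=1}^{m}a_i)^2\le m\sum_{i=1}^m a_i^2$, with $m=|\gamma|$) yields
\[
(f(x)-f(y))^2 \le |\gamma|\sum_{i}(f(\gamma_i)-f(\gamma_{i+1}))^2 = |\gamma|\sum_{(u,v)\in\dE} r((u,v),\gamma)\,(f(u)-f(v))^2 .
\]

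Next I would substitute this into the energy sum and exchange the order of summation. Using \eqref{e:Dirichlet} for $P'$, summing the oriented identity over unordered edges and invoking property (iii) $\Phi_{x,y}(\gamma)=\Phi_{y,x}(\gamma^{\rr})$ to pass between the ``$\frac12\sum_{x,y}$ over ordered pairs'' normalization and the ``sum over unordered edges'' normalization (noting $r((u,v),\gamma)+r((v,u),\gamma)$ is the natural quantity to track, and $(f(u)-f(v))^2$ is symmetric), I get
\[
\mathcal{E}_{P'}(f,f) \le \sum_{(u,v)\in\dE}(f(u)-f(v))^2 \sum_{(x,y)\in\dE'}\sum_{\gamma\in\PP_{x,y}(G)} r((u,v),\gamma)\,|\gamma|\,\Phi_{x,y}(\gamma).
\]
The inner double sum is exactly $c_{u,v}\,A_{u,v}(\Phi)$ by the definition of congestion, so it is bounded by $c_{u,v}A(\Phi)$; hence $\mathcal{E}_{P'}(f,f)\le A(\Phi)\sum_{(u,v)\in\dE}c_{u,v}(f(u)-f(v))^2$, which upon renormalizing (each unordered edge of $G$ contributing twice, matching the $\frac12$ in \eqref{e:Dirichlet}) equals $A(\Phi)\,\mathcal{E}_{P}(f,f)$.

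The routine but slightly delicate point — the main place to be careful — is the bookkeeping of the orientation conventions: $\dE$ and $\dE'$ are sets of directed edges, the flow is indexed by directed edges with the symmetry (iii), while the Dirichlet form \eqref{e:Dirichlet} is written with a factor $\frac12$ over ordered pairs; one must check that the factors of $2$ (from $\{x,y\}$ versus $(x,y)$ and $(y,x)$, and from $\gamma$ versus $\gamma^{\rr}$) cancel consistently so that the final constant is exactly $A(\Phi)$ and not $2A(\Phi)$ or $A(\Phi)/2$. The symmetry property (iii) is precisely what makes the contribution of $\gamma$ and of $\gamma^{\rr}$ agree, so that working with unordered edges of $G'$ and unordered paths is legitimate. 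Everything else is the standard Cauchy–Schwarz / Fubini argument, so once \eqref{e:Dirichlet} is in hand and the definition of $A_{u,v}(\Phi)$ is unwound, there is no remaining obstacle; I would simply cite \cite{comparison} for the original formulation and remark that the discrepancy in presentation is inessential, as the statement of the lemma already does.
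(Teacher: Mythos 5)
Your proposal is correct and is precisely the standard Cauchy--Schwarz path-comparison argument that the paper defers to by citing \cite{comparison} (Theorem 2.3) rather than proving in-house. The only wobble is your factor-of-$2$ bookkeeping: if you keep the $\frac{1}{2}$ from \eqref{e:Dirichlet} on both sides throughout (rather than dropping it and ``renormalizing'' at the end), the constant comes out exactly as $A(\Phi)$ with no further adjustment, and the symmetry hypothesis (iii) is not actually needed for this step since the bound $\mathcal{E}_{P'}(f,f)\le \frac12\sum_{(u,v)\in\dE}c_{u,v}A_{u,v}(\Phi)(f(u)-f(v))^2$ already follows directly from the ordered-pair form of the Dirichlet form.
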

We now present a variant of Lemma \ref{lem:comparison2}, useful for comparing two networks with different state spaces.
\begin{definition}
\label{d:RILP}
Let $G:=(V,E,(c_e)_{e \in E})$ and  $G':=(V',E',(c'_e)_{e \in E'})$  be two connected networks. Using notation from Definition \ref{d:flow}, we say that $\phi:V' \to V $ is a \emph{rough-embedding} from $G'$ to $G$ if there exist a map $\Phi: \dE' \to \cup_{x,y:\,(x,y)\in \dE'} \PP_{\phi(x),\phi(y)}(G) $ and some constants $\alpha,\beta>0$ such that the following hold:   
\begin{itemize}
\item[(i)] $\Phi((x,y))\in \PP_{\phi(x),\phi(y)}(G) $ and   $\Phi((y,x)) $ is its reversal for every $(x,y) \in \dE'$, 
\item[(ii)]  for every $(x,y) \in \dE' $ we have that $\frac{1}{c_{x,y}} \le \alpha \sum_{e \in \Phi((x,y))  }\frac{1}{c_e} $, and 
\item[(iii)]  every $(u,v) \in \dE $ belongs to the image (under $\Phi$) of at most $\beta $ edges $(x,y) \in \dE'$.
\end{itemize} 
\end{definition}
\begin{fact}[e.g.~\cite{lyons} Theorem 2.17]
\label{f:RILP}
If there is a rough-embedding from $G'$ to $G$ and $G'$ is transient, then $G$ is also transient.
\end{fact}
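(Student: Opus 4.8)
The plan is to derive transience of $G$ from transience of $G'$ through the Dirichlet principle \eqref{e:DP}, by transporting test functions on $G$ back to $G'$ along the paths $\Phi$. Recall first that a connected network is transient iff $\mathrm{Cap}(\{o\})>0$ for some (equivalently every) vertex $o$, since $\mathrm{Cap}(\{o\})=\pi(o)\Pr_o[T_o^+=\infty]$. So I would fix $o'\in V'$ with $\mathrm{Cap}_{G'}(\{o'\})>0$ (which exists because $G'$ is transient), put $o:=\phi(o')$, and aim to show $\mathrm{Cap}_{G}(\{o\})>0$.

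Before the main estimate I would record one preliminary fact: the map $\phi$ has finite fibres. Indeed, if $\phi^{-1}(p)$ were infinite, then for each $x\in\phi^{-1}(p)$ pick an incident edge $\{x,y\}\in E'$ and look at the first edge of the path $\Phi((x,y))$ in $G$, which is one of the finitely many directed edges of $G$ leaving $p$; by the pigeonhole principle some fixed directed edge of $G$ would then lie on the image under $\Phi$ of infinitely many directed edges of $G'$, contradicting condition (iii) of Definition \ref{d:RILP}. I would also note that one may assume each $\Phi((x,y))$ is a simple path, replacing it by a self-avoiding subpath if necessary, since this only shrinks the quantities appearing in conditions (ii) and (iii).

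The heart of the argument is a comparison of Dirichlet forms: for every finitely supported $f\colon V\to[0,1]$ with $f(o)=1$, putting $g:=f\circ\phi\colon V'\to[0,1]$ (so that $g(o')=1$, and $g$ is finitely supported by the fibre remark), I claim
\[
\mathcal{E}_{G'}(g,g)\;\leq\;\alpha\beta\,\mathcal{E}_{G}(f,f).
\]
To prove this, fix $\{x,y\}\in E'$ and write $\Phi((x,y))$ as a simple path $\phi(x)=p_0\sim p_1\sim\cdots\sim p_m=\phi(y)$ in $G$. Telescoping $f(p_0)-f(p_m)=\sum_{i=1}^m\bigl(f(p_{i-1})-f(p_i)\bigr)$ and applying the Cauchy--Schwarz inequality with weights $c_{p_{i-1}p_i}$ gives
\[
c'_{x,y}\bigl(g(x)-g(y)\bigr)^2\;\leq\;\Bigl(c'_{x,y}\sum_{e\in\Phi((x,y))}\tfrac{1}{c_e}\Bigr)\sum_{i=1}^m c_{p_{i-1}p_i}\bigl(f(p_{i-1})-f(p_i)\bigr)^2\;\leq\;\alpha\sum_{i=1}^m c_{p_{i-1}p_i}\bigl(f(p_{i-1})-f(p_i)\bigr)^2,
\]
where the last step is condition (ii) (the routing path $\Phi((x,y))$ in $G$ has total resistance at most $\alpha$ times the resistance $1/c'_{x,y}$ of the edge $\{x,y\}$). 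Summing over $\{x,y\}\in E'$ and exchanging the order of summation, condition (iii) ensures that each edge $\{u,v\}$ of $G$ contributes its term $c_{uv}(f(u)-f(v))^2$ at most $\beta$ times, which yields the displayed bound. Since $g$ is then admissible in the Dirichlet principle \eqref{e:DP} for $\mathrm{Cap}_{G'}(\{o'\})$, I would conclude $\mathcal{E}_{G}(f,f)\geq\frac{1}{\alpha\beta}\mathcal{E}_{G'}(g,g)\geq\frac{1}{\alpha\beta}\mathrm{Cap}_{G'}(\{o'\})$; taking the infimum over admissible $f$ and invoking \eqref{e:DP} once more gives $\mathrm{Cap}_{G}(\{o\})\geq\frac{1}{\alpha\beta}\mathrm{Cap}_{G'}(\{o'\})>0$, so $G$ is transient.

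The only genuinely delicate point is the bookkeeping in the Dirichlet-form comparison — keeping track of the orientations along the paths $\Phi((x,y))$, the constants $\alpha$ and $\beta$, and the reduction to simple paths — together with the short but necessary finite-fibre observation, which is what allows a finitely supported test function to be transported; the rest is a routine application of Cauchy--Schwarz. Dually, one could instead push a finite-energy unit flow from $o'$ to infinity in $G'$ forward along the paths $\Phi$ to obtain a finite-energy unit flow from $o$ to infinity in $G$ and invoke T.\ Lyons' flow criterion, with essentially the same estimates; this is the route taken in \cite{lyons}.
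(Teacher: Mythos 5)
The paper does not prove Fact~\ref{f:RILP} --- it is stated as a citation to \cite{lyons}, Theorem 2.17 --- so there is no in-paper argument to compare against. Your Dirichlet-form proof is correct, and it is the energy-dual of the flow-pushforward argument in \cite{lyons}, as you note at the end; either route is standard.

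One point worth flagging explicitly: you apply condition (ii) of Definition~\ref{d:RILP} in the form $c'_{x,y}\sum_{e\in\Phi((x,y))}1/c_e\leq\alpha$, i.e.\ the resistance of the routing path in $G$ is at most $\alpha$ times the resistance of the routed edge of $G'$. As literally printed, the paper's condition (ii) has this inequality reversed, which appears to be a typo: with the printed direction the fact is simply false (take $G'=\Z_+$ with $c'_{\{n,n+1\}}=2^n$, $G=\Z_+$ with unit weights, $\phi=\mathrm{id}$ and $\Phi$ the identity routing; the printed (ii) and (iii) hold with $\alpha=\beta=1$, $G'$ is transient, yet $G$ is recurrent). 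Your reading is the one in \cite{lyons}, and it is the one under which the argument works. Granting that, the rest is in order: local finiteness of $G$ together with condition (iii) forces $\phi$ to have finite fibres, so $f\circ\phi$ is a finitely supported admissible test function for \eqref{e:DP}; the loop-erasure of routing paths is genuinely needed before swapping the order of summation, so that condition (iii) gives a clean factor $\beta$ rather than a multiplicity-weighted count; and the Cauchy--Schwarz step with weights $1/c_e$ combined with (ii) and (iii) yields $\mathcal{E}_{G'}(f\circ\phi,f\circ\phi)\leq\alpha\beta\,\mathcal{E}_G(f,f)$, whence $\mathrm{Cap}_{G}(\{\phi(o')\})\geq(\alpha\beta)^{-1}\mathrm{Cap}_{G'}(\{o'\})>0$ and $G$ is transient.
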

We now recall the notion of a rough-isometry. 

\begin{definition}
\label{def: RI}
Let $G_i:=(V_i,E_i)$ (where $i=1,2$) be two bounded degree graphs. For $u,v \in V_i$, let $d_i(u,v)$ be graph distance (w.r.t.~$G_i$) between $u$ and $v$ (i.e., the number of edges along the shortest path in $G_i$ between $u$ and $v$). We say that $f:V_1 \to V_2$ is a $K$-\emph{rough-embedding} from $G_1$ to $G_2$ if the map $f$ can stretch or shrink distances only by a multiplicative and additive factor of at most $K$ in the following sense: 
\begin{equation}
\label{e:d1d2}
 \forall \, u,v \in V_1, \quad  \sfrac{1}{K} d_1(u,v)-1 \leq d_2(f(u),f(v)) \leq K(d_1(u,v)+1). \end{equation} We say that it is a  $K$-\emph{rough-isometry} from $G_1$ to $G_2$ if \eqref{e:d1d2} holds and in addition $f$ is also ``almost surjective" in the following sense: For every $w \in V_2$, there exists some $v \in V_1$ such that $d_2(f(v),w) \leq K $. We say that $f$ is a \emph{rough-embedding} (respectively, \emph{rough-isometry}) if for some $K \in \N$ it is a $K$-rough-embedding (respectively, $K$-rough-isometry).   We say that $G_1 $ and $G_2$ are \emph{roughly-isometric} if there exists a rough-isometry between them.
\end{definition}
\begin{remark}
\label{r:RILP}
Let $G_i:=(V_i,E_i)$ (where $i=1,2$) be two bounded degree graphs. It is easy to see that if $f:V_1 \to V_2$ is a rough-embedding from $G_1$ to $G_2$ (in the sense of Definition \ref{def: RI}), then $f$ is also a rough-embedding from $(V_1,E_1,(1)_{e \in E_1 })$  to  $(V_2,E_2,(1)_{e \in E_2 })$  (i.e., from SRW on $G_1$ to SRW on $G_2$) in the sense of Definition \ref{d:RILP}.
\end{remark}
\begin{remark}
\label{r:ER}
Let $G_i:=(V_i,E_i)$ ($i=1,2$) be two bounded degree graphs.  It is easy to verify that  if  $f$  is a rough-isometry from $G_1$ to $G_2$, then any $g:V_2 \to V_1 $ such that $g(u) \in \{v \in V_1 : d_2(f(v),u)=\min_{v' \in V_1 } d_2(f(v'),u)\} $ for all $u \in V_2$, is also a rough-isometry. 
\end{remark}
In the bounded degree setup the relation of being roughly-isometric is an equivalence relation on graphs which preserves recurrence/transience (this can be derived from Fact \ref{f:RILP} and Remarks \ref{r:RILP}-\ref{r:ER}).

\begin{fact}[e.g.~\cite{lyons} Theorem 2.17 and Proposition 2.18]
\label{f:RIinvariance}
Let $G_i:=(V_i,E_i)$ (where $i=1,2$) be two connected graphs of uniformly bounded degree. Let $c^{(i)}:=(c_{e}^{(i)})_{e \in E}$ ($i=1,2$) be some edge weights which are uniformly bounded from above and below by some positive constants. 
\begin{itemize}
\item[(a)] The random walk corresponding to $(V_1,E_1,c^{(1)})$  is transient iff SRW on $G_1$ is.
\item[(b)] If there exists a rough-embedding from $G_1$ to $G_2$ and SRW on $G_1$ is transient, then also SRW on $G_2$ is transient.
\item[(c)] If there exists a rough-isometry from $G_1$ to $G_2$ then SRW on $G_1$ is transient iff  SRW on $G_2$ is transient.
\end{itemize}  
\end{fact}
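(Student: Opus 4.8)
The plan is to deduce Fact \ref{f:RIinvariance} formally from the comparison machinery already in place (Lemma \ref{lem:comparison1}, Fact \ref{f:RILP}, and Remarks \ref{r:RILP} and \ref{r:ER}), with no new ingredient, this being a classical statement (cf.\ \cite{lyons}). For part (a) I would argue by comparison of Dirichlet forms: fixing constants $0<a\le b<\infty$ with $a\le c^{(1)}_e\le b$ for every $e\in E_1$, the identity \eqref{e:Dirichlet} immediately yields $a\,\mathcal{E}(f,f)\le\mathcal{E}'(f,f)\le b\,\mathcal{E}(f,f)$ for every $f$, where $\mathcal{E}$ and $\mathcal{E}'$ are the Dirichlet forms of SRW on $G_1$ and of the $c^{(1)}$-weighted walk, both chains having state space $V_1$. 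Applying Lemma \ref{lem:comparison1} in each direction shows that $\mathrm{Cap}(\{x\})$ vanishes for one of these chains iff it vanishes for the other; since a point has zero capacity precisely when the chain is recurrent, part (a) follows.

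For parts (b) and (c) I would invoke the rough-embedding transfer principle. By Remark \ref{r:RILP}, a rough-embedding $f$ from $G_1$ to $G_2$ in the sense of Definition \ref{def: RI} is also a rough-embedding from the network $(V_1,E_1,(1)_{e\in E_1})$ to $(V_2,E_2,(1)_{e\in E_2})$ in the sense of Definition \ref{d:RILP}; Fact \ref{f:RILP} then gives that transience of SRW on $G_1$ implies transience of SRW on $G_2$, which is part (b). For part (c), a rough-isometry $f$ from $G_1$ to $G_2$ is in particular a rough-embedding, so part (b) gives one implication, while Remark \ref{r:ER} supplies a rough-isometry (hence a rough-embedding) $g$ from $G_2$ to $G_1$, obtained by sending each $u\in V_2$ to a point whose $f$-image is closest to $u$; applying part (b) to $g$ gives the reverse implication, so SRW on $G_1$ is transient iff SRW on $G_2$ is.

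The only step requiring genuine care, and the one place where uniform bounded degree enters, is the estimate behind Remark \ref{r:RILP} that $f$ meets condition (iii) of Definition \ref{d:RILP}: the path $\Phi((x,y))$ assigned to an edge $\{x,y\}\in E_1$ has length at most $K(d_1(x,y)+1)=2K$ by \eqref{e:d1d2}, so if a directed edge $(u,v)$ of $G_2$ lies on it then $f(x)$ and $f(y)$ lie in the ball of radius $2K$ about $u$ in $G_2$; bounded degree of $G_2$ bounds the number of possible endpoints, and since the fibres of $f$ have $G_1$-diameter at most $K$ (again by \eqref{e:d1d2}, as $f(x)=f(x')$ forces $d_1(x,x')\le K$), bounded degree of $G_1$ bounds the number of their preimages. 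Together these give the required uniform overlap bound $\beta$; since the statement is being quoted as a Fact from \cite{lyons}, in the write-up I would present the short reductions of parts (a) and (c) explicitly and simply cite \cite{lyons} for this rough-embedding estimate rather than reproving it.
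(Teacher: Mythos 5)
Your derivation is correct and matches the route the paper itself indicates just before stating this Fact ("this can be derived from Fact \ref{f:RILP} and Remarks \ref{r:RILP}--\ref{r:ER}"): parts (b) and (c) follow from Remark \ref{r:RILP}, Fact \ref{f:RILP}, and Remark \ref{r:ER} exactly as you argue, and your filling in of part (a) via the Dirichlet-form comparison of Lemma \ref{lem:comparison1} is the standard elaboration, since the paper defers the details to \cite{lyons}. The bounded-overlap estimate you sketch for Remark \ref{r:RILP} is also correct and correctly identifies where bounded degree of both graphs is used.
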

We now recall the notion of `lumping' (or `short-circuiting') a network and the fact that this operation preserves transience.
\begin{definition}
\label{def:lumping}
Let  $G=(V, E,(c_{e})_{e
\in E})$ be an infinite connected network on $(V,E)$ with edge weights $(c_{e})_{e
\in E}$. Recall that for $v\in V$ we write $c_v:=\sum_{u:u \sim v }c_{u,v}$. Let $A_1,A_2,\ldots $ be a partition of $V$ into non-empty disjoint sets such that $ \sum_{v \in A_i}c_v< \infty$ for all $i$. Consider   the network  $G_{(A_{i})_{i \in \N}}:=(\N,\tilde E,(\tilde c_{e})_{e \in \tilde E })$ obtained from $G$ by collapsing each $A_i$ into a single state $i$ and setting $\tilde c_{i,j}:=\sum_{a_{i} \in A_i,a_{j} \in A_j }c_{a_i,a_j}$ for all $(i,j) \in \N^{2}$ (this operation is also called lumping $A_i$ together, or short-circuiting $A_i$, for all $i$) and setting  $\tilde E:=\{\{i,j\}:\tilde c_{i,j}>0 \} $. We say that $G_{(A_{i})_{i \in \N}}$ is a \emph{factor} of $G$.
\end{definition}
The following is a consequence of Rayleigh''s monotonicity principle.
\begin{fact}
\label{f:induced}
In the setup of Definition \ref{def:lumping}, if $G$ is transient then so is any factor of $G$.
\end{fact}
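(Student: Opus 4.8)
The plan is to use the \emph{flow criterion} for transience: a connected network $N=(V,E,(c_e)_{e\in E})$ is transient if and only if for some (equivalently, for every) vertex $o$ there is a flow from $o$ to infinity of finite energy and positive strength --- that is, an antisymmetric function $\theta$ on the directed edges with $\sum_{y}\theta(x,y)=0$ for every $x\neq o$, with strength $\sum_{y}\theta(o,y)>0$, and with energy $\mathcal{E}_{N}(\theta):=\sum_{\{x,y\}\in E}c_{x,y}^{-1}\theta(x,y)^2<\infty$ (see \cite{lyons}). Since $G$ is transient, fix such a flow $\theta$ on $G$ out of a vertex $o$, and let $i_0$ be the index with $o\in A_{i_0}$; it then suffices to build a flow of finite energy and positive strength out of $i_0$ on the factor $G_{(A_i)_{i\in\N}}$. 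Throughout, sums over pairs $(a,b)$ are understood to range over $a\in A_i$, $b\in A_j$ with $\{a,b\}\in E$.

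The natural candidate is the \emph{pushforward} of $\theta$ under the lumping map: set
\[
\tilde\theta(i,j):=\sum_{a\in A_i,\,b\in A_j}\theta(a,b)\qquad\text{for every }\{i,j\}\in\tilde E .
\]
Antisymmetry of $\tilde\theta$ is inherited from that of $\theta$. For the node law at an index $i\neq i_0$: since $o\notin A_i$, every $a\in A_i$ obeys $\sum_w\theta(a,w)=0$, so $\sum_{w\notin A_i}\theta(a,w)=-\sum_{w\in A_i}\theta(a,w)$, and summing over $a\in A_i$ the right-hand side vanishes because $\sum_{a\in A_i}\sum_{b\in A_i}\theta(a,b)=0$ by antisymmetry of $\theta$ over $A_i\times A_i$; hence $\sum_j\tilde\theta(i,j)=0$. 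The same computation at $i_0$ gives $\sum_j\tilde\theta(i_0,j)=\sum_{a\in A_{i_0}}\sum_w\theta(a,w)=\sum_y\theta(o,y)>0$, so the strength of $\tilde\theta$ equals that of $\theta$. Thus $\tilde\theta$ is a flow to infinity of positive strength on the factor. (The factor is a genuine infinite connected network: connectivity is inherited from $G$; $\tilde c_i\leq\sum_{a\in A_i}c_a<\infty$ by the summability hypothesis of Definition \ref{def:lumping}; and its state space is $\N$ by construction.)

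It remains to control the energy. For each $\{i,j\}\in\tilde E$, Cauchy--Schwarz gives
\[
\tilde\theta(i,j)^2=\Bigl(\sum_{a\in A_i,\,b\in A_j}\theta(a,b)\Bigr)^2\leq\Bigl(\sum_{a\in A_i,\,b\in A_j}c_{a,b}\Bigr)\Bigl(\sum_{a\in A_i,\,b\in A_j}c_{a,b}^{-1}\theta(a,b)^2\Bigr)=\tilde c_{i,j}\sum_{a\in A_i,\,b\in A_j}c_{a,b}^{-1}\theta(a,b)^2 ,
\]
so that $\tilde c_{i,j}^{-1}\tilde\theta(i,j)^2\leq\sum_{a\in A_i,\,b\in A_j}c_{a,b}^{-1}\theta(a,b)^2$. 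Summing over $\{i,j\}\in\tilde E$ and noting that each unoriented edge $\{a,b\}$ of $G$ with $a,b$ in distinct parts appears in exactly one of these inner sums (intra-part edges are simply discarded) yields $\mathcal{E}_{\tilde G}(\tilde\theta)\leq\mathcal{E}_G(\theta)<\infty$. Hence the factor carries a finite-energy flow to infinity of positive strength, so it is transient, as claimed. I do not expect a genuine obstacle here; the only points needing care are the node-law/strength bookkeeping (which hinges on the vanishing of $\sum_{a\in A_i}\sum_{b\in A_i}\theta(a,b)$) and keeping the conductances $c_e$ and the resistances $r_e=c_e^{-1}$ aligned in the Cauchy--Schwarz step. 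As an alternative one can argue purely by Rayleigh's monotonicity principle: lumping each $A_i$ is the same as merging its vertices, i.e.\ adjoining edges of infinite conductance inside $A_i$, which does not increase effective resistances, while the ensuing removal of loops and coalescing of parallel edges does not affect the walk; therefore $R_{\mathrm{eff}}^{\tilde G}(i_0\leftrightarrow\infty)\leq R_{\mathrm{eff}}^{G}(o\leftrightarrow\infty)<\infty$.
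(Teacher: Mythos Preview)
Your proof is correct. The paper itself does not give a detailed argument: it simply records the fact as ``a consequence of Rayleigh's monotonicity principle'' and moves on. Your final paragraph is exactly this one-line justification, so in that sense you recover the paper's approach.

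Your main argument---pushing a finite-energy flow forward under the lumping map and controlling the energy by Cauchy--Schwarz---is a genuinely different (and more explicit) route. It has the advantage of being self-contained and of making the energy inequality $\mathcal{E}_{\tilde G}(\tilde\theta)\le\mathcal{E}_G(\theta)$ quantitative, whereas the Rayleigh argument is shorter but relies on the reader knowing that short-circuiting is equivalent to adding infinite-conductance edges and hence cannot increase effective resistance to infinity. One small point worth noting (though it does not affect correctness): since the parts $A_i$ may be infinite, the sums defining $\tilde\theta(i,j)$ and the node-law computation involve potentially infinite sums; absolute convergence follows from Cauchy--Schwarz together with the hypothesis $\sum_{a\in A_i}c_a<\infty$, so all rearrangements are justified.
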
 
\subsection{Comparison of a Markov chain to its additive symmetrization}
\label{s:s}
The connection between reversible networks,  electrical networks and potential theory is classical (see e.g., \cite{doyle} and \cite[Ch.~2]{lyons}). It was only in recent years that this connection was extended to the non-reversible setup in several extremely elegant works. The first progress on the non-reversible front was made by Doyle and Steiner \cite{doyle2} who derived an extremal characterization of the commute-time between two states, which shows that commute-times in the additive symmetrization of an irreducible chain cannot be smaller than the corresponding commute-times in the original chain. 

 Gaudilli\`ere and Landim \cite{nonrev} extended much of the classic potential theory to the non-reversible setup and derived several extremal characterizations for the capacity between two disjoint sets. In particular, they showed  \cite[Lemma 2.5]{nonrev} that the capacity between two disjoint sets of an irreducible Markov chain with a stationary measure $\pi$ is at least as large as that of the additive symmetrization of the chain.   
\begin{theorem}[\cite{nonrev} Lemma 5.1]
\label{thm:AG}
Consider an irreducible Markov chain  $\mathbf{X}=(X_k)_{k=0}^{\infty}$  on a countable state space $V$ with a stationary  measure $\pi$ and transition probabilities given by $P$. Let $\mathbf{X}^{s}=(X_k^{s})_{k=0}^{\infty} $ be the Markov chain corresponding to $S:=\frac{1}{2}(P+P^*)$, where $P^*(x,y):=\sfrac{\pi(y)}{\pi(x)}P(y,x)$. 
Then, if  $\mathbf{X}^{s}$ is transient then $\mathbf{X}$ is also transient.
\end{theorem}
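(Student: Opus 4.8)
The statement to prove is Theorem~\ref{thm:AG} (attributed to Gaudilli\`ere--Landim, \cite{nonrev} Lemma 5.1): for an irreducible Markov chain $\mathbf X$ with stationary measure $\pi$ and transition kernel $P$, if the additive symmetrization $S=\tfrac12(P+P^*)$ is transient then $\mathbf X$ is transient. Equivalently, by contrapositive: if $\mathbf X$ is recurrent then $S$ is recurrent.

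\textbf{Plan.} The plan is to argue by contrapositive and use the probabilistic (non-variational) characterization of recurrence via return probabilities, together with the fact that a recurrent chain has a stationary measure that is unique up to scaling and is explicitly given by an occupation-time formula — exactly the formula recalled in \S\ref{s:Markov}. Fix a reference state $x$. If $\mathbf X$ is recurrent, then $\pi$ is (up to a constant) the Green's-type measure $\pi(y)=\E_x[\#\{0\le t<T_x^+ : X_t=y\}]$, and in particular $P^*$ is the genuine time-reversal of the recurrent chain $\mathbf X$ with respect to this $\pi$. The key point is that $P^*$ is \emph{also} recurrent: the time-reversal of a recurrent irreducible chain is recurrent (its $n$-step return probabilities to $x$ satisfy $\pi(x)(P^*)^n(x,x)=\pi(x)P^n(x,x)$, so $\sum_n (P^*)^n(x,x)=\sum_n P^n(x,x)=\infty$). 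So both $P$ and $P^*$ are recurrent with the same stationary measure $\pi$.

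\textbf{Key steps.} First I would reduce to showing: if $P$ and $P^*$ are both recurrent with common stationary $\pi$, then $S$ is recurrent. Second, I would compute the return structure of $S$ at $x$. A one-step transition of $S$ is, with probability $\tfrac12$ each, a step of $P$ or a step of $P^*$. The cleanest route is the Dirichlet-form / capacity criterion: a chain is recurrent iff $\mathrm{Cap}(\{x\})=0$, and for the reversible chain $S$ one has the Dirichlet principle $\mathrm{Cap}_S(\{x\})=\inf_f \mathcal E_S(f,f)$ over finitely supported $f$ with $f(x)=1$. By \eqref{e:Dirichlet}, $\mathcal E_S(f,f)=\mathcal E_P(f,f)$. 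So it suffices to show that for every $\epsilon>0$ there is a finitely supported $f$ with $f(x)=1$ and $\mathcal E_P(f,f)<\epsilon$. This is where recurrence of $\mathbf X$ enters: recurrence of $P$ is equivalent (by the non-reversible potential theory of \cite{nonrev}, or directly) to $\inf_f \big(\mathcal E_P(f,f) + \text{(boundary term)}\big)=0$; more concretely, take $f_n(y):=\Pr_y[T_x<T_{A_n^c}]$ for an exhausting sequence of finite sets $A_n\uparrow V$ — these are the equilibrium potentials for $P$, they are finitely supported (supported in $A_n$), satisfy $f_n(x)=1$, and $\mathcal E_P(f_n,f_n)=\mathrm{Cap}_P(\{x\}\,;\,A_n)=\pi(x)\Pr_x[T_{A_n^c}<T_x^+]$, which tends to $\mathrm{Cap}_P(\{x\})=\pi(x)\Pr_x[T_x^+=\infty]=0$ by recurrence. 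Plugging these same $f_n$ into $\mathcal E_S=\mathcal E_P$ gives $\mathrm{Cap}_S(\{x\})\le \mathcal E_S(f_n,f_n)=\mathcal E_P(f_n,f_n)\to 0$, so $\mathrm{Cap}_S(\{x\})=0$ and $S$ is recurrent. (One must double-check that $\mathcal E_P(f_n,f_n)$ really equals the capacity / escape probability for the non-reversible $P$; this is the standard identity $\mathcal E_P(h,h)=\langle(I-P)h,h\rangle_\pi$ evaluated at the harmonic-off-$\{x\}$, zero-outside-$A_n$ function $h$, giving a telescoping that leaves exactly the escape probability from $x$ weighted by $\pi(x)$ — the non-reversibility is harmless because we only use $\langle(I-P)f_n,f_n\rangle=\langle(I-P^*)f_n,f_n\rangle$ and the one-sided harmonicity.)

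\textbf{Main obstacle.} The delicate point — and the one I would spend the most care on — is the identification $\mathcal E_P(f_n,f_n)=\pi(x)\Pr_x[T_{A_n^c}<T_x^+]$ in the \emph{non-reversible} setting, i.e.\ justifying that the $P$-equilibrium potential $f_n$ achieves a Dirichlet-type energy equal to the escape probability. Unlike the reversible case this is not a variational minimum (the correct variational characterization in \cite{nonrev} involves both $P$ and $P^*$), but as an \emph{upper} bound on $\mathrm{Cap}_S=\inf\mathcal E_S$ it is all we need, and it follows from a direct computation: writing $f_n=\mathbf 1_{\{x\}}+g_n$ with $g_n$ supported in $A_n\setminus\{x\}$ and $(I-P)f_n\equiv 0$ on $A_n\setminus\{x\}$, one gets $\mathcal E_P(f_n,f_n)=\langle(I-P)f_n,f_n\rangle_\pi=\pi(x)\big(f_n(x)-Pf_n(x)\big)=\pi(x)\Pr_x[\text{walk leaves }A_n\text{ before returning to }x]$, by conditioning on the first step and recurrence/optional stopping. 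Everything else is routine. An alternative, fully probabilistic argument avoiding Dirichlet forms altogether: show directly that $\mathbf X^s$ started at $x$ returns to $x$ a.s.\ by a coupling/merging of the $P$- and $P^*$-pieces of its trajectory with excursions of the recurrent chains $P$ and $P^*$; this also works but the capacity argument above is shorter and self-contained given the tools already assembled in \S\ref{s:comparison}.
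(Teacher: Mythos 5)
The paper does not prove Theorem~\ref{thm:AG}; it is cited from \cite{nonrev}, so there is no in-paper argument to compare against. Your proposal is nevertheless a correct, self-contained proof, and it uses precisely the tools the paper assembles in \S\ref{s:Markov}--\S\ref{s:comparison}. The core is sound: for the $P$-equilibrium potentials $f_n(y)=\Pr_y[T_x<T_{A_n^c}]$ (finite $A_n\ni x$, $A_n\uparrow V$), one computes --- using $P$-harmonicity of $f_n$ on $A_n\setminus\{x\}$, $f_n(x)=1$, and $f_n\equiv 0$ off $A_n$ --- that $\mathcal E_P(f_n,f_n)=\pi(x)\bigl(1-Pf_n(x)\bigr)=\pi(x)\Pr_x[T_{A_n^c}<T_x^+]$, which tends to $\pi(x)\Pr_x[T_x^+=\infty]=0$ by recurrence of $P$; by \eqref{e:Dirichlet} one has $\mathcal E_S(f_n,f_n)=\mathcal E_P(f_n,f_n)$; and since $S$ is reversible and irreducible, the Dirichlet principle \eqref{e:DP} yields $\mathrm{Cap}_S(\{x\})\leq\mathcal E_S(f_n,f_n)\to 0$, so $S$ is recurrent, proving the contrapositive. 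Two small points of polish: (a) your Plan asserts that $P^*$ is recurrent, but that fact is never used --- only recurrence of $P$ enters; (b) the parenthetical ``the non-reversibility is harmless because we only use $\langle(I-P)f_n,f_n\rangle=\langle(I-P^*)f_n,f_n\rangle$'' misattributes the reason: the identity $\mathcal E_P(f_n,f_n)=\pi(x)(1-Pf_n(x))$ follows purely from one-sided $P$-harmonicity and finite support and makes no reference to $P^*$, while the $P\leftrightarrow P^*$ equality is \eqref{e:Dirichlet}, which you invoke separately and correctly to replace $\mathcal E_S$ by $\mathcal E_P$.
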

We shall make use of Theorem \ref{thm:AG} only in the proof of Parts (i) and (iv) of Theorem \ref{thm:main}. For the rest of our results, our method of proof allows for a direct comparison of the considered non-reversible chain with some reversible Markov chain which can be compared with the SRW on the considered graph. 
\section{Equivalence between SRW on $G$ and the additive symmetrization of the $k$-NBRW}
\label{s:SOLG}
Let $G=(V,E)$ be a connected graph of bounded degree. In order to take advantage of Theorem \ref{thm:AG} we first need to compare the additive symmetrization of the $k$-NBRW with SRW on $G$.  In this section we do so by utilizing the machinery from \S\ref{s:comrev}-\ref{s:s}. We will later show that in all of the cases considered in Theorems \ref{thm:main} and \ref{thm:Cayley}  the $k$-NBRW has a stationary measure which is pointwise uniformly bounded from above and from below. Hence by Fact \ref{f:RIinvariance}, instead of the additive symmetrization of the $k$-NBRW, we may consider SRW on the graph supporting its transitions. We start with the case $k=1$.

 Recall that the \emph{line graph} of $G$ is defined as $G_{E}:=(E,E') $, where $\{e,e' \} \in E' $ iff $e \cap e' \neq \eset$ and $e \neq e'$. Recall that $\overrightarrow{E}:=\{(x,y):\{x , y\} \in E \}$ is the collection of directed edges of $G$. Following \cite{ortner} we define the \emph{symmetric oriented line  graph}  of $G$ to be $G_{\overrightarrow{E}}:= (\overrightarrow{E},F)$ the graph supporting the transitions of the NBRW on $G$. That is, $\{\de,\overrightarrow{f} \} \in F $ iff either
\begin{itemize}
\item[(a)] $\de_+=\df_-$ and $\Ind{\deg(\de_+)=1 \text{ or } \de_- \neq \df_+ }=1 $, or
\item[(b)]
  $\de_-=\df_+$ and $\Ind{\deg(\df_+)=1 \text{ or }\df_- \neq \de_+ }=1$. \end{itemize}
We shall make crucial use of the following simple observation: 
\begin{lemma}
\label{lem:simpleobservtion}
Let $G$ be a finite connected graph of bounded degree. The graph supporting the transitions of the additive symmetrization of the NBRW on $G$ is $G_{\dE} $. Moreover, both SRW on  $G_{\dE} $ and the additive symmetrization of the NBRW on $G$ are reversible w.r.t.\ the counting measure on $\dE $, and their minimal transition probability between neighboring directed edges is at least half the inverse of the maximal degree of $G$. In particular, either both are transient or both are recurrent.
\end{lemma}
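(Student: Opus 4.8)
The plan is to unpack the definitions and check the three assertions in turn, reducing everything to a comparison of transition probabilities on the common state space $\dE$.

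First I would establish the claim about the graph supporting the transitions. Recall that the additive symmetrization $S = \frac12(B + B^*)$ where $B$ is the NBRW kernel from \eqref{e:B} and $B^*$ is its time-reversal with respect to some stationary measure $\pi$. Since $G$ is finite and connected, the NBRW on $\dE$ is irreducible (using bounded degree and the fact that backtracking is allowed only at leaves), hence $\pi$ is unique up to scaling. I would observe that $B$ itself, viewed on $\dE$, satisfies $\sum_{\de} B(\de, \cdot) = $ const, i.e.\ the counting measure $\pi_{\dE}$ is stationary for $B$: indeed for a fixed directed edge $\overrightarrow{f} = (z,w)$, the directed edges $\de = (x,y)$ with $B(\de, \overrightarrow{f}) > 0$ are exactly those with $y = z$, and each contributes $\frac{1}{\deg(z)-1}$ (with the appropriate leaf correction), and there are $\deg(z)-1$ such non-backtracking predecessors (or one, in the leaf case, contributing $1$) — so the incoming mass sums to $1$. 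Hence $\pi_{\dE}$ is the stationary measure, $B^*(\de, \overrightarrow{f}) = B(\overrightarrow{f}, \de)$, and $S(\de,\overrightarrow{f}) = \frac12(B(\de,\overrightarrow{f}) + B(\overrightarrow{f},\de))$. Now $S(\de,\overrightarrow{f}) > 0$ iff $B(\de,\overrightarrow{f})>0$ or $B(\overrightarrow{f},\de)>0$, which by \eqref{e:B} is exactly the condition (a)-or-(b) defining the edge set $F$ of $G_{\dE}$. This proves the first two sentences.

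Next I would verify reversibility and the transition-probability bound. Since $S(\de,\overrightarrow{f}) = S(\overrightarrow{f},\de)$ by construction and $\pi_{\dE}$ is constant, $S$ is reversible w.r.t.\ $\pi_{\dE}$; and SRW on $G_{\dE}$ is reversible w.r.t.\ the degree measure of $G_{\dE}$, which is uniformly bounded above and below since $G$ has bounded degree — so by Fact \ref{f:RIinvariance}(a) it is transience-equivalent to the walk reversible w.r.t.\ $\pi_{\dE}$, but here I can be more direct. For the quantitative bound: whenever $S(\de,\overrightarrow{f}) > 0$, at least one of $B(\de,\overrightarrow{f})$, $B(\overrightarrow{f},\de)$ is positive, and any positive value of $B$ is either $1$ or $\frac{1}{\deg(v)-1} \geq \frac{1}{\Delta - 1} \geq \frac{1}{\Delta}$ where $\Delta := \max_v \deg(v)$; hence $S(\de,\overrightarrow{f}) \geq \frac{1}{2\Delta}$. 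Similarly the SRW on $G_{\dE}$ has minimal positive transition probability $\frac{1}{\text{maxdeg}(G_{\dE})} \geq \frac{1}{2(\Delta-1)} \geq \frac{1}{2\Delta}$.

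Finally, for the "either both transient or both recurrent" conclusion, I would note that both chains are irreducible reversible chains on the same state space $\dE$ with the same support graph $G_{\dE}$, and the ratio of their Dirichlet forms is bounded above and below: using \eqref{e:Dirichlet}, $\EE_S(f,f) = \frac12\sum_{\de,\overrightarrow{f}} \pi_{\dE}(\de) S(\de,\overrightarrow{f})(f(\de)-f(\overrightarrow{f}))^2$ and the analogous expression for SRW on $G_{\dE}$ with its degree measure; since the edge weights $\pi_{\dE}(\de)S(\de,\overrightarrow{f})$ and the SRW edge weights ($\equiv 1$ on each edge of $G_{\dE}$) are both uniformly bounded above and below on the common edge set $F$, Lemma \ref{lem:comparison1} applied in both directions gives that the capacities are comparable up to constants, hence one is recurrent iff the other is. The main (minor) obstacle is the careful bookkeeping at leaves — checking that the counting measure on $\dE$ really is stationary for $B$ in the presence of the forced-backtracking term in \eqref{e:B}, and that the leaf correction does not spoil the minimal-transition-probability bound — but this is a routine case check rather than a conceptual difficulty.
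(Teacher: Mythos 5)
Your proof is correct and follows essentially the same route as the paper's terse proof, which declares the bulk of the lemma ``verified by inspection'' and invokes Fact \ref{f:RIinvariance}(a) for the last sentence; your Dirichlet-form comparison via Lemma \ref{lem:comparison1} is a close relative of that fact, and you rightly read ``reversible w.r.t.\ the counting measure'' for SRW on $G_{\dE}$ as really meaning reversible w.r.t.\ a measure (the degree measure of $G_{\dE}$) comparable to it. One harmless slip: the NBRW on $\dE$ need not be irreducible for connected $G$ (on a cycle the two orientations form disjoint orbits), so ``hence $\pi$ is unique up to scaling'' is unwarranted --- but since you exhibit the stationary counting measure directly, this plays no role in the argument.
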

\begin{proof}
The statement of the last sentence  follows from the rest of the lemma via Fact \ref{f:RIinvariance} (by noting that when the reversible measure is the counting measure, the edge-weights are simply the transition probabilities). The rest can be verified by inspection. 
\end{proof}
Before describing some relations between $G,G_E$ and $G_{\dE}$ we need a simple lemma. Its proof is deferred to \S\ref{s:sinkfree}. 
\begin{lemma}
\label{lem:sinkfree}
Let $G=(V,E)$ be an infinite locally finite connected  graph of minimal degree at least $2$. There exists a \emph{sink-free and source-free orientation} of $E$, i.e., a map $f:E \to \overrightarrow{E}$  such that: 
\begin{itemize}
\item[(i)] $f(\{u,v\})$ is either $(u,v)$ or $(v,u)$ for every $\{u,v\} \in E $, and 
\item[(ii)] every $v \in V$ is the tail of at least one $\overrightarrow{e} \in \dE $ in the image of $f$ and also the head of at least one  $\overrightarrow{e}$ in the image of $f$ (i.e.~$V=\{f(e)_+:e \in E \}=\{f(e)_-:e \in E \}$).
\end{itemize}
Recall that for $\gamma =(a,b)\in \dE$ its reversal $(b,a)$ is denoted by $\gamma^{\rr}$. For a graph $H=(V_{H},E_{H})$ and $x,y \in V_H$ we denote by $\mathrm{dist}_{H}(x,y)$ the graph distance of $x$ from $y$ in $H$, defined as number of edges along of the shortest path from $x$ to $y$ in $H$.
 
\end{lemma}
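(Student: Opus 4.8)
The plan is to construct the desired orientation as a (countable) union of consistently‑oriented directed cycles and bi‑infinite directed paths covering all of $E$, and then check that conditions (i)–(ii) follow. First I would reduce to the case where $G$ is $2$‑edge‑connected up to pruning: since every vertex has degree at least $2$, the graph $G$ has no leaves, and I would show that its edge set decomposes into cycles and bi‑infinite simple paths. Concretely, pick an arbitrary edge $e_1=\{u_0,u_1\}$; since $\deg(u_1)\ge 2$ there is an edge $e_2$ incident to $u_1$ other than $e_1$; continue greedily, always leaving a vertex along a previously‑unused edge. This produces either a finite closed walk (in which case extract a cycle and orient it cyclically) or, using König's infinity lemma / a diagonal argument, a bi‑infinite simple trail which we orient coherently from one end to the other. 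Remove the used edges and repeat (transfinitely / by an exhaustion of the countable edge set), so that every edge receives exactly one orientation.

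The key point is that along any coherently oriented cycle or bi‑infinite path, every internal vertex on that cycle/path is the head of one of its edges and the tail of another. So I would need to ensure that every vertex $v\in V$ lies in the "interior" of at least one of the oriented pieces in the sense that it is both a head and a tail of edges in the chosen family. The delicate case is a vertex $v$ that is an \emph{endpoint} of a bi‑infinite path piece — but a bi‑infinite path has no endpoints, so the only worry is a vertex that appears only as a "turn‑around" where the greedy trail backtracked. To avoid this I would run the greedy extension so that it never immediately re‑uses an edge and, when building a bi‑infinite trail, extend it in both directions; at a vertex $v$ of degree $d\ge 2$, after the trail passes through $v$ at least once, $v$ is already both a head and a tail. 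If $v$ has remaining unused incident edges after the first pass, they get absorbed into later pieces; since each later piece through $v$ again contributes both an incoming and an outgoing edge at $v$, the conditions are only reinforced. Hence $V=\{f(e)_+:e\in E\}=\{f(e)_-:e\in E\}$.

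The main obstacle I anticipate is the bookkeeping needed to make the greedy/trail‑decomposition argument rigorous in the infinite, possibly‑not‑locally‑finite‑looking but actually locally finite setting: one must be careful that the greedy process can always be continued (this uses $\delta(G)\ge 2$ at every step, noting that when we arrive at $v$ along an edge, at least one other incident edge is still available unless all of $v$'s edges have been used — in which case $v$ is already both a head and a tail and we simply start a fresh piece elsewhere), and that the union of all pieces exhausts $E$ (this uses that $E$ is countable, so an exhaustion $E=\bigcup_n E_n$ suffices, peeling off one piece meeting the least‑indexed unused edge at each stage). An alternative, cleaner route is to invoke the standard fact that a connected graph in which every vertex has even degree (or is infinite with appropriate end conditions) has an Eulerian‑type decomposition; but since here degrees need not be even, the cycle‑plus‑bi‑infinite‑path decomposition is the natural tool, and I would present the greedy construction directly rather than quoting a black box.
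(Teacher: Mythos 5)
Your strategy of greedily building consistently oriented trails is close in spirit to the paper's construction, but there is a genuine gap in the claim that the greedy process always yields a cycle or a bi-infinite simple trail. Once some edges have been removed in earlier rounds, the residual subgraph no longer has minimum degree two, so a greedy trail can terminate at a vertex $v$ all of whose other incident edges are already used, producing a finite open trail or a one-sided ray. You anticipate this and assert that ``$v$ is already both a head and a tail,'' but that is exactly what requires proof: having some previously used edges at $v$ does not, by itself, give $v$ both an incoming and an outgoing oriented edge. The observation that would rescue this — which you only gesture at — is that the \emph{first} trail to touch a vertex $v$ necessarily leaves $v$ with both an in-edge and an out-edge: at the moment of that trail's first visit to $v$, at most one edge at $v$ is in use and $\deg(v)\ge 2$, so the trail can always continue past $v$ (and if $v$ is the initial vertex of that trail's seed edge, the backward extension supplies the missing direction). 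An induction over the sequence of trails then propagates the invariant that every vertex incident to a used edge is already both a head and a tail. Without stating and proving this invariant, the parity issue (a trail endpoint contributes only one new incidence at $v$) is left unresolved, and the ``decomposition into cycles and bi-infinite paths'' framing is simply not correct.

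The paper avoids the parity bookkeeping altogether with an ear-decomposition style construction: after a first doubly-unbounded piece, each subsequent piece is either a finite path whose two endpoints already lie in the covered set $A_m$, or a one-sided ray whose finite endpoint lies in $A_m$, with all interior vertices new. This ensures structurally that every newly covered vertex is interior to its ear — hence both a head and a tail — with no induction on used-edge counts. The paper also only needs to cover all \emph{vertices}, not all edges; once every vertex is interior to some ear, the leftover edges are oriented arbitrarily, which is a further simplification over your full edge-decomposition. If you keep your route, you should aim for a trail decomposition (not cycles/bi-infinite paths) and make the first-visit invariant explicit; as written, the decomposition claim is false and the patch is an assertion, not a proof.
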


\begin{lemma}
\label{lem:equivek1}
Let $G$ be an infinite connected  graph of bounded degree and minimal degree at least $2$. Then
\begin{itemize}
\item[(1)] The graphs $G$ and $G_E$ are roughly-isometric.
\item[(2)] SRW on $G_E$ is a factor of SRW on $G_{\dE}$ (in the sense of Definition \ref{def:lumping}).
\item[(3)] There exists a rough-embedding from $G_E$ to  $G_{\dE}$.
\item[(4)] If there exists some $K \in \N$ such that $\mathrm{dist}_{G_{\dE}}(\gamma,\gamma^{\rr}) \le K $ for every $\gamma \in \dE $  then there exists a rough-isometry from $G_E$ to  $G_{\dE}$.
\item[(5)] If condition $(1)$ holds then there exists some $K \in \N$ such that $\mathrm{dist}_{G_{\dE}}(\gamma,\gamma^{\rr}) \le K $ for every $\gamma \in \dE $.  
\end{itemize} 
\end{lemma}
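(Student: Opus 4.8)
\emph{Overview, and Parts (1)--(2).} The plan is to treat the five items essentially independently, the recurring tool being the sink-free, source-free orientation $f:E\to\dE$ supplied by Lemma~\ref{lem:sinkfree} (applicable since $G$ is infinite, locally finite, connected of minimal degree $\ge2$). Recall that, since $G$ has minimal degree $\ge2$, a pair $\{(x,y),(z,w)\}$ of directed edges is an edge of $G_{\dE}$ exactly when $y=z,\ x\neq w$, or $x=w,\ z\neq y$. For (1), the map $\psi:E\to V$ sending $\{u,v\}$ to (say) its smaller endpoint is a $2$-rough-isometry: a $G$-path of length $m$ between endpoints of $e$ and $e'$ yields a $G_E$-walk of length $\le m+2$, a $G_E$-path of length $n$ yields a $G$-walk of length $\le n+1$ between endpoints of $e$ and $e'$, and $\psi$ is almost surjective since every vertex is an endpoint of some edge. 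For (2), I would partition $\dE$ into the reversal pairs $A_{\{u,v\}}:=\{(u,v),(v,u)\}$ (each finite, $G$ being of bounded degree); inspection of $G_{\dE}$ shows that between $A_e$ and $A_{e'}$ with $e\neq e'$ sharing a vertex there are exactly two edges, none if $e,e'$ share no vertex, and that $G_{\dE}$ has no loop $\{(u,v),(v,u)\}$ (here minimal degree $\ge2$ is used). Thus lumping the $A_e$'s produces the network on $G_E$ with all edge-weights equal to $2$, whose walk is SRW on $G_E$, and Fact~\ref{f:induced} gives the claim.

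\emph{Part (3).} Take $\phi:=f$, which is injective. If $(e,e')$ is an oriented edge of $G_E$, then $e,e'$ meet at a single vertex $v$, and I produce a path of length $\le2$ from $f(e)$ to $f(e')$ in $G_{\dE}$ by cases on the directions of $f(e),f(e')$ at $v$: if one points into $v$ and the other out, $\{f(e),f(e')\}$ is already an edge of $G_{\dE}$; if both point into $v$ (respectively, both out), then necessarily $\deg(v)\ge3$ --- otherwise $v$ is a sink (respectively, a source), contradicting the choice of $f$ --- so $v$ has a neighbour $c$ other than the two far endpoints of $e,e'$, and $f(e)\to(v,c)\to f(e')$ (respectively, $f(e)\to(c,v)\to f(e')$) works. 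Let $\Phi((e,e'))$ be this path and $\Phi((e',e))$ its reversal. Conditions (i)--(ii) of Definition~\ref{d:RILP} hold with $\alpha=1$ (all conductances equal $1$, all paths nonempty), and (iii) holds because a path of length $\le2$ running through a fixed directed edge $(\gamma,\gamma')$ of $G_{\dE}$ forces $\phi(e)$ to lie among $\gamma$ and its $G_{\dE}$-neighbours and $\phi(e')$ among $\gamma'$ and its $G_{\dE}$-neighbours, a bounded set; injectivity of $\phi$ then bounds the number of offending $(e,e')$. Fact~\ref{f:RILP} finishes.

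\emph{Part (4).} Keep $\phi=f$. Chaining the length-$\le2$ detours of (3) along a $G_E$-geodesic gives $\mathrm{dist}_{G_{\dE}}(\phi(e),\phi(e'))\le2\,\mathrm{dist}_{G_E}(e,e')$, while conversely the undirected edges underlying the directed edges along any $G_{\dE}$-path form a $G_E$-walk (consecutive directed edges of $G_{\dE}$ share a vertex) from $e$ to $e'$ (since the edge underlying $\phi(e)=f(e)$ is $e$), so $\mathrm{dist}_{G_E}(e,e')\le\mathrm{dist}_{G_{\dE}}(\phi(e),\phi(e'))$; hence \eqref{e:d1d2} holds with $K=2$. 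The hypothesis enters only for almost surjectivity: for $\gamma=(a,b)$ we have $f(\{a,b\})\in\{\gamma,\gamma^{\rr}\}$, so $\mathrm{dist}_{G_{\dE}}(\gamma,\phi(\{a,b\}))$ is either $0$ or $\mathrm{dist}_{G_{\dE}}(\gamma,\gamma^{\rr})\le K$; the same bound makes $G_{\dE}$ connected (turn around at every junction), so that $\mathrm{dist}_{G_{\dE}}$ is a genuine metric. Thus $\phi$ is a $\max(2,K)$-rough-isometry from $G_E$ to $G_{\dE}$.

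\emph{Part (5), and the main obstacle.} Fix $\gamma=(a,b)$; I want a bounded-length path from $(a,b)$ to $(b,a)$ in $G_{\dE}$. If $\deg(a)\ge3$, pick distinct neighbours $c,c'$ of $a$ other than $b$: then $(a,b)\to(c,a)\to(a,c')\to(b,a)$ is a path of length $3$ in $G_{\dE}$, the point being that each consecutive pair is a legal edge --- one ``turns around'' at $a$, permitted because $\deg(a)\neq1$ and $b,c,c'$ are pairwise distinct. By the reversal symmetry of $G_{\dE}$, $\deg(b)\ge3$ also yields distance $\le3$. If $\deg(a)=\deg(b)=2$, then $\{a,b\}$ lies in the interior of a maximal $2$-path, which by condition $(1)$ has length $\le L$ and hence (minimal degree $\ge2$) ends on both sides at vertices of degree $\ge3$; following this $2$-path from $(a,b)$ --- a forced sequence of transitions --- reaches within $\le L$ steps a directed edge $(w',w)$ with $\deg(w)\ge3$, one turns around there in $\le3$ steps as above, and one then follows the $2$-path back to reach $(b,a)$ within $\le L$ further steps, so $\mathrm{dist}_{G_{\dE}}(\gamma,\gamma^{\rr})\le 2L+3$. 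In every case $K:=2L+3$ works. I expect the only real work to be the bookkeeping in (5) --- verifying that each ``turn-around'' detour genuinely avoids the forbidden immediate backtrack, and that in the all-degree-$2$ case condition $(1)$ indeed delivers degree-$\ge3$ vertices at both ends of the $2$-path within distance $L$ --- together with the analogous backtrack checks hidden in (3)--(4); everything else is routine line-graph and flow bookkeeping.
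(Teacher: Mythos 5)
Your proof is correct and follows essentially the same route as the paper: it uses the sink-free, source-free orientation $f$ from Lemma~\ref{lem:sinkfree} as the map from $E$ to $\dE$, views $G_E$ as the quotient of $G_{\dE}$ by edge reversal, and for Part~(5) turns a directed edge around at a nearby degree-$\ge3$ vertex (reached along a bounded $2$-path when both endpoints have degree $2$). Two cosmetic remarks: your Part~(5) is in fact spelled out more completely than the paper's terse sketch, which only exhibits the length-$3$ turnaround at a degree-$\ge3$ vertex and leaves the $2$-path traversal implicit; and in Part~(2) the closing appeal to Fact~\ref{f:induced} is unnecessary, since the lumping computation itself already establishes that SRW on $G_E$ is a factor of SRW on $G_{\dE}$ (that fact is only needed later to transfer transience), while in Part~(3) you build a flow in the sense of Definition~\ref{d:RILP} where the paper instead proves the distance bound $\mathrm{dist}_{G_E}(e,e')\le\mathrm{dist}_{G_{\dE}}(f(e),f(e'))\le 2\,\mathrm{dist}_{G_E}(e,e')$ and passes through Remark~\ref{r:RILP}, but either form of rough-embedding serves equally well downstream.
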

\begin{proof}
For Part (2) observe that $G_E$ is obtained from $G_{\dE} $ by identifying (lumping) every $\de \in \dE$ with its reversal.
 
Fix a sink-free and source-free orientation $f$ (as in Lemma \ref{lem:sinkfree}). For Part (1)  observe that the map $e \to f(e)_+  $ from $E$ to $V$ is clearly a 2-rough-isometry from $G_E$ to $G$ (for this we do not need to use the fact that $f$ is sink-free and source-free). 
For Part (3) we now argue that $f$ is a $2$-rough-embedding from $G_{E}$ to $G_{\dE}$. In fact, we argue that $\mathrm{dist}_{G_E}(e,e') \le \mathrm{dist}_{G_{\dE}}(f(e),f(e')) \le 2 \mathrm{dist}_{G_E}(e,e')$ for all $e,e' \in E$. The first inequality is obvious. For the second inequality it suffices to consider the case that $ \mathrm{dist}_{G_E}(e,e')=1$. 
 If $e \cap e' = \{v\} $ and $ f(e)$ and $f(e') $ are not adjacent in $ G_{\dE}$, then either  $v=f(e)_+=f(e')_+ $ or $v=f(e)_-=f(e')_-$. In the first case, (by condition (ii) in the definition of a sink-free and source-free orientation) there must be some $\{v,v'\} \in E \setminus \{e,e'\} $ such that $f({\{v,v'\}})_{-}=v$ (in the second case $f({\{v,v'\}})_{+}=v $) and so  $f(e) $ and $f(e') $ are both adjacent to $f(\{v,v'\})$  in $ G_{\dE}$.

For Part (4) observe that under the assumption that $\mathrm{dist}_{G_{\dE}}(\gamma,\gamma^{\rr}) \le K $ for every $\gamma \in \dE $,  we have that the map $f$ (defined above) is a rough-isometry. 

For Part (5)
note that if $v \in V$ is of degree at least 3, then similar reasoning as in the proof of Part (iii) yields that for every pair of adjacent vertices $u \sim v$ we have that $(u,v)$ and $(v,u)$ are of distance at most 3 in $G_{\dE}$ (namely, if $w,z$ are other neighbors of $v$ then $((u,v),(v,w),(z,v),(v,u))$ is a path from $(u,v)$ to $(v,u)$ in  $G_{\dE}$).
\end{proof} 
\begin{corollary}
\label{cor:SRWASNBRW}  
Let $G$ be a connected infinite graph of bounded degree. Then SRW on $G$ is transient iff the additive symmetrization of the NBRW on $G$ is transient.
\end{corollary}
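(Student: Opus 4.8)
The plan is to chain together the facts already assembled in Lemmas \ref{lem:simpleobservtion}, \ref{lem:equivek1} and Facts \ref{f:RILP}, \ref{f:RIinvariance}, \ref{f:induced}, reducing the claim to a comparison between SRW on $G$, SRW on the line graph $G_E$, and SRW on the symmetric oriented line graph $G_{\dE}$. The only genuine subtlety is that these lemmas assume the minimal degree of $G$ is at least $2$, whereas in general $G$ may have leaves; so the first step is to dispose of that case. If $G$ has a vertex of degree $1$ one simply notes that recurrence/transience of both SRW and the NBRW is unchanged by a standard pruning argument: iteratively delete degree-$1$ vertices (this is a network reduction that does not affect transience of SRW by Rayleigh monotonicity, and for the NBRW a walk entering a pendant path is forced to return, so observing the NBRW only on the pruned graph yields the NBRW there, exactly as in Example \ref{ex:long2paths}). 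Either the pruning terminates at a graph of minimal degree $\ge 2$ — possibly a single edge, which is a trivial finite case — or one can argue directly. Since the paper works throughout with bounded degree graphs and infinite graphs, it is cleanest to observe that we may assume $G$ is infinite and then pass to the $2$-core; I would state this reduction briefly and then assume minimal degree at least $2$.

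Once minimal degree $\ge 2$ is in force, the argument is a short diagram chase. First, by Lemma \ref{lem:simpleobservtion}, the additive symmetrization of the NBRW on $G$ is transient if and only if SRW on $G_{\dE}$ is transient (they are reversible w.r.t.\ the counting measure on $\dE$ with comparable transition probabilities, so Fact \ref{f:RIinvariance} applies). So it suffices to show SRW on $G$ is transient iff SRW on $G_{\dE}$ is transient. For the forward direction, suppose SRW on $G$ is transient; then SRW on $G_E$ is transient by Part (1) of Lemma \ref{lem:equivek1} together with Fact \ref{f:RIinvariance}(c), and then SRW on $G_{\dE}$ is transient by Part (3) of Lemma \ref{lem:equivek1} (a rough-embedding from $G_E$ to $G_{\dE}$) together with Fact \ref{f:RILP}. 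For the reverse direction, suppose SRW on $G_{\dE}$ is transient; by Part (2) of Lemma \ref{lem:equivek1}, SRW on $G_E$ is a factor of SRW on $G_{\dE}$, so SRW on $G_E$ is transient by Fact \ref{f:induced}, and then SRW on $G$ is transient by Part (1) of Lemma \ref{lem:equivek1} and Fact \ref{f:RIinvariance}(c) again.

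The main obstacle, such as it is, is purely bookkeeping: making sure the degree-$1$ reduction is handled cleanly and that the various comparison facts are invoked with the correct hypotheses (bounded degree, countable state space, comparability of edge weights). Note in particular that we do \emph{not} need condition $(1)$ here — Parts (4) and (5) of Lemma \ref{lem:equivek1} are not used, since for transience (as opposed to establishing a two-sided rough-isometry) the one-directional rough-embedding of Part (3) together with the factor relation of Part (2) already pin down transience in both directions. I would therefore write the proof as: reduce to minimal degree $\ge 2$; apply Lemma \ref{lem:simpleobservtion} to replace the additive symmetrization by SRW on $G_{\dE}$; then the two implications via (1)+(3)+Fact \ref{f:RILP} and via (2)+Fact \ref{f:induced}+(1) respectively.
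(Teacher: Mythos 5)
Your diagram chase from $G$ through $G_E$ to $G_{\dE}$ is correct and matches the paper's (unspelled-out) intent: Lemma \ref{lem:simpleobservtion} to identify the additive symmetrization with SRW on $G_{\dE}$ up to Fact \ref{f:RIinvariance}, then Part (1) plus Part (3) plus Fact \ref{f:RILP} for one direction, and Part (2) plus Fact \ref{f:induced} plus Part (1) for the other. You are also right that Parts (4)--(5) of Lemma \ref{lem:equivek1} (and condition $(1)$) are not needed for this corollary.

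However, the degree-one reduction has a genuine gap. The cited analogy with Example \ref{ex:long2paths} does not carry over: there the long $2$-paths run between two vertices that survive in the reduced graph, so the walk that enters at one end is forced out the other and the observed walk inherits the non-backtracking constraint. A pendant path ending in a leaf is different. The NBRW that enters a pendant path at $v_0$ bounces off the leaf and returns to $v_0$, and is then free to leave $v_0$ in \emph{any} direction, including along the very edge it arrived through before the excursion. The walk you observe on the pruned vertex set can therefore backtrack at $v_0$, which the NBRW on the pruned graph cannot do, so it is not the NBRW there. This is exactly why the paper's $G'$ is not the bare pruned graph but the pruned graph with a self-loop added at each surviving vertex that lost a neighbor; the loop makes the transition "go into the pendant subtree, come back, and then leave in any direction" available to the NBRW on $G'$, and the paper explicitly flags that these loops are crucial for the rough-isometry between the supporting graphs. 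Moreover, even granting a coupling at the level of NBRWs, you still need to argue separately that the additive symmetrizations of the two NBRWs are recurrence-equivalent; the paper does this by passing to the induced chain $\mathbf{Y}$ of the symmetrized walk on the surviving directed edges and verifying, with some care about transitions that use deleted vertices, that its edge weights are uniformly bounded above and below so that Fact \ref{f:RIinvariance} applies. Your write-up skips both of these steps, and the first one as stated is actually false.
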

\begin{proof}
We shall show  that w.l.o.g.~we may assume that the minimal degree of $G$ is at least 2, and so the assertion of the corollary follows from Lemmas \ref{lem:simpleobservtion} and \ref{lem:equivek1} in conjunction with Fact \ref{f:RIinvariance}. We argue that by repeatedly deleting all degree one vertices, while adding a self-loop at each remaining $v \in V $ such that at least  one  of its neighbors was deleted, we obtain a graph $G'=(V(G'),E(G'))$ of minimal degree at least 2 such that: 
\begin{itemize}
\item[(i)]
SRW on $G$ is transient iff SRW on $G'$ is transient, and also 
\item[(ii)]
the additive symmetrization of the NBRW on $G$ is transient iff the additive symmetrization of the NBRW on $G'$ is transient.
\end{itemize}
While (i) is clear, (ii) requires some justification. Recall the notion of the induced chain on a set from \S\ref{s:Markov}.  We denote the induced chain on the directed edges of $G'$ w.r.t.\ the additive symmetrization of the NBRW on $G$ by $\mathbf{Y}$. Clearly, $\mathbf{Y}$ is transient iff  the additive symmetrization of the NBRW on $G$ is transient, and so it suffices to show that $\mathbf{Y}$ is transient iff $\mathbf{Z}$ is transient, where  $\mathbf{Z}$ is the additive symmetrization of the NBRW on $G'$.  Observe that the graph   $\widehat G=(V(\widehat G),E(\widehat G))$   supporting the transitions of  $\mathbf{Y}$ is  roughly-isometric to $\widehat G'=(V(\widehat G'),E(\widehat G'))$ the graph supporting the transitions of  $\mathbf{Z}$ and that  $\mathbf{Y}$ and  $\mathbf{Z}$ are  both reversible w.r.t.\ the counting measures on their state spaces,  $\pi_{V(\widehat G)} $ and  $\pi_{V(\widehat G')} $, respectively.\footnote{Here we have used the fact that  $\pi_{V(\widehat G)} $   is the restriction to  $V(\widehat G)$   of $\pi_{\dE} $ the counting measure on the directed edges of $G$, and that the additive symmetrization of the NBRW on $G$, whose restriction to $V(\widehat G) $ is $\mathbf{Y}$, is reversible w.r.t.\ $\pi_{\dE} $. (Recall from \S\ref{s:Markov} that for an irreducible Markov chain which is reversible w.r.t. a measure $\pi$, for any recurrent subset $B$ (i.e., one which is a.s.\ visited by the chain) the restriction of $\pi$ to $B$ is reversible for the induced  chain on $B$.)} (For the existence of the claimed rough-isometry, it is crucial that loops were added to $G'$ at each remaining $v \in V $ such that at least  one  of its neighbors was deleted. The rough-isometry can be taken to be the inclusion map from $V(\widehat G) $ to $V(\widehat G') $, which is a $2$-rough-isometry.) 

 The edge-weights corresponding to  $\mathbf{Z}$ are simply its transition probabilities and hence are clearly uniformly bounded from above and from below (as the maximal degree of $G$ is finite; Recall that in general, for a Markov chain  whose transition kernel $P$ is reversible w.r.t.\ a measure $\pi$ the edge-weight of edge $\{x,y\}$ can be taken to be $\pi(x)P(x,y) $)). We claim that the same holds for  $\mathbf{Y}$, which in conjunction with the fact that $\widehat G$ and $\widehat G'$ are roughly-isometric concludes the proof, using Parts (a) and (c) of  Fact \ref{f:RIinvariance}.

Taking $\pi_{V(\widehat G)} $ for the stationary measure of  $\mathbf{Y}$, the corresponding edges weights for   $\mathbf{Y}$   are simply with the transition probabilities. Thus its enough to show that these are uniformly bounded from below (as they are bounded from above by 1). Note that any transition which is also allowed for the NBRW on $G$ has transition probability which is at least half of the inverse of the maximal degree of $G$. Hence it suffices to verify this for transition probabilities from $(b,a)$ to $(v,a)$ and from $ (a,b)$ to $(a,v)$ with $a \in A$ and $b,v \in V(G')$ such that $\{a,u\},\{a,b\} \in E(G') $, where $A$ is the set of vertices of $ V(G')$ to which a self-loop was added in $G'$. In other words, $A$ is the set of vertices of $G$ which are connected in $G$ to a degree one vertex via a path whose internal vertices all have degree two.

To see that such transitions (from $(b,a)$ to $(v,a)$ and from $ (a,b)$ to $(a,v)$ with $a \in A$ and $b,v \in V(G')$ such that $\{a,u\},\{a,b\} \in E(G') $) have probability bounded from below, consider the scenario that the additive symmetrization of the NBRW on $G$ goes from $(b,a)$ to $(a,a')$ for some deleted vertex $a'$ and then goes from $(a,a')$ to $(v,a)$. Similarly, it may move from $(a,b)$ to $(a',a)$ for some deleted vertex $a'$ and then move from $(a',a)$ to $(a,v)$. Here by ``deleted vertex" we mean a vertex belonging to $V \setminus V(G') $.

These scenarios show that the transition probabilities w.r.t.\ $\mathbf{Y} $ from $(b,a)$ to $(v,a)$ and from $ (a,b)$ to $(a,v)$, with $a \in A$ and $b,v \in V(G')$, are indeed uniformly bounded from below, as desired.                 
\end{proof}
We now treat the case $k>1$. Recall the definitions of $\Pke,\Pkv,\Omega_k^{\ee}, \Omega_k^{\vv} ,P_{k,\ee}$ and $P_{k,\vv} $ from \S\ref{s:notation}-\ref{s:MC}. 
\begin{definition}
\label{d:Gk}
Let $G_k^{\ee}:=(\Omega_k^{\ee},E_k^{\ee}) $ and $G_k^{\vv}:=(\Omega_k^{\vv},E_k^{\vv}) $, where $\{a,b \} \in E_k^{\ee}  $ (respectively, $\in E_k^{\vv}  $) if and only if $P_{k,\ee}(a,b)+P_{k,\ee}(b,a)>0 $ (respectively, $P_{k,\vv}(a,b)+P_{k,\vv}(b,a)>0 $).
\end{definition}
  Observe that $G_1^{\ee}=G_1^{\vv}=G_{\dE} $. More generally, under condition  $\ekk$ (respectively, $\vkk$) we have that  $\PP_k^{\ee}=\Omega_k^{\ee} $ (respectively, $\PP_k^{\vv}=\Omega_k^{\vv} $) and we may consider the graph  $H_k^{\ee}:=(U_{k}^{\ee},F_k^{\ee})$  (respectively, $H_k^{\vv}:=(U_{k}^{\vv},F_k^{\vv})$) obtained from $G_k^{\ee} $ (respectively, $G_k^{\vv} $) by identifying each $a \in \Pke$ (respectively, $a \in \Pkv $) with its reversal. More precisely, $U_{k}^{\ee}:=\{\{a,a^{\rr} \}:a \in \Pke  \}$ and $\{\{a,a^{\rr} \},\{b,b^{\rr} \} \} \in F_k^{\ee} $ iff $\sum_{w \in \{a,a^{\rr} \},w' \in \{b,b^{\rr} \}  }P_{\ee,k}(w,w')>0 $. The relations between $G_k^{\ee} $ and $H_k^{\ee} $ and between  $G_k^{\vv} $ and $H_k^{\vv} $ are analogous to the one between $G_{\dE} $ and $G_E$. Similarly, when $G$ is of bounded degree and condition  $\ekk$ (respectively, $\vkk$) holds,   the relation between SRW on  $G_k^{\ee} $ (respectively, $G_k^{\vv} $) and the additive symmetrization of the edge (respectively, vertex) $k$-NBRW on $G$ is analogous to the relation between SRW on $G_{\dE}$ and the additive symmetrization of the NBRW described in Lemma \ref{lem:simpleobservtion}, provided the stationary measure $\hat \pi$ w.r.t.\ we are taking the symmetrization satisfies $\sup_{x,y}\sfrac{\hat \pi(x)}{\hat \pi(y)}<\infty $. For the edge $k$-NBRW this will be proved in Lemma \ref{lem:mik}. For the vertex $k$-NBRW we are only able to prove this under the additional assumption that the graph is vertex-transitive (see the proof of Part (iv) of Theorem \ref{thm:main}).    

 The following lemma is the $k>1$ analog of Lemma \ref{lem:equivek1}. 
\begin{lemma}
\label{lem:equivek}
Let $G$ be a bounded degree graph satisfying condition  $\ekk$ (resp.~$\vkk$).  Then
\begin{itemize}
\item[(1)] The graphs $G$ and $H_k^{\ee}$ (respectively, $H_k^{\vv}$) are roughly-isometric.
\item[(2)] SRW on $H_k^{\ee}$ (respectively, $H_k^{\vv}$) is a factor of SRW on  $G_k^{\ee} $ (respectively, $G_k^{\vv} $).
\item[(3)] If condition  $\ek$ (respectively, $\vk$) holds there exists some $K \in \N$ such that $\mathrm{dist}_{G_k^{\ee}}(\gamma,\gamma^{\rr}) \le K $ for every $\gamma \in \Pke $ (respectively, $\mathrm{dist}_{G_k^{\vv}}(\gamma,\gamma^{\rr}) \le K $ for every $\gamma \in \Pkv$) and so $G_k^{\ee}$ and $H_k^{\ee}$ (respectively, $G_k^{\vv}$ and $H_k^{\vv}$) are roughly-isometric.  
\end{itemize} 
\end{lemma}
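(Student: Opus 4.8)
The plan is to run the proof of Lemma~\ref{lem:equivek1} with $H_k^{\ee}$ and $G_k^{\ee}$ (resp.~$H_k^{\vv}$ and $G_k^{\vv}$) playing the roles of $G_E$ and $G_{\dE}$. Condition~$\ekk$ (resp.~$\vkk$) enters at the outset in two ways: it guarantees $\Omega_k^{\ee}=\Pke$ (resp.~$\Omega_k^{\vv}=\Pkv$), so that $G_k^{\ee}$ is genuinely the transition graph of the edge $k$-NBRW on $\Pke$; and it forces the minimal degree of $G$ to be at least $2$ (a trail in $\Pke$ ending at a leaf could not be extended), which is exactly the standing hypothesis of Lemma~\ref{lem:equivek1}. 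Granting this, Part~(2) is immediate: $H_k^{\ee}$ is by construction the graph obtained from $G_k^{\ee}$ by the lumping operation of Definition~\ref{def:lumping} applied to the partition of $\Pke$ into the (two-element) classes $\{\gamma,\gamma^{\rr}\}$, and the finiteness hypothesis $\sum_{v\in A_i}c_v<\infty$ holds because bounded degree of $G$ forces bounded degree of $G_k^{\ee}$; this is the verbatim analogue of Lemma~\ref{lem:equivek1}(2), and likewise for the vertex version.

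For Part~(3): since $\gamma$ and $\gamma^{\rr}$ have the same vertex set, they share a vertex, so condition~$\ek$ provides an $R$ (independent of $\gamma$) with which the edge $k$-NBRW reaches $\gamma^{\rr}$ from $\gamma$ in at most $R$ steps; as each one-step transition is an edge of $G_k^{\ee}$, this gives $\mathrm{dist}_{G_k^{\ee}}(\gamma,\gamma^{\rr})\le R=:K$ for every $\gamma\in\Pke$. This is precisely the hypothesis needed to repeat the proof of Lemma~\ref{lem:equivek1}(4): the map $U_k^{\ee}\to\Omega_k^{\ee}$ sending each class to a chosen representative is coarsely Lipschitz (with constant $2K+1$), does not increase distances in the other direction (project a $G_k^{\ee}$-path onto classes), and is almost-surjective with constant $K$ (the unchosen representative of a class is within $K$ of the chosen one); hence $G_k^{\ee}$ and $H_k^{\ee}$ are roughly-isometric. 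The vertex statement is the same with $\vk$ in place of $\ek$.

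Part~(1) is where the real work lies. The candidate rough-isometry is the map $\psi\colon U_k^{\ee}\to V$ sending a class $\{\gamma,\gamma^{\rr}\}$ to a fixed coordinate --- say the midpoint --- of one of its representatives (since reversal moves a midpoint coordinate by at most one, and any two coordinates of a single $\gamma\in\Pke$ lie within $k$ in $G$, this is well defined up to an additive constant). That $\psi$ is coarsely Lipschitz is clear, since a $k$-NBRW step from $\gamma$ replaces it by a path of the form $\gamma(v)$, which shares $k-1$ consecutive edges with $\gamma$, so $\psi$ moves by a bounded amount in $G$. The reverse estimate and almost-surjectivity reduce, by chaining along $G$-geodesics, to two claims about sliding: \textbf{(a)} every vertex of $G$ lies within a bounded distance of a coordinate of some trail in $\Pke$; and \textbf{(b)} there is a finite constant $D=D(G,k)$ such that any two trails in $\Pke$ sharing a vertex are at $H_k^{\ee}$-distance at most $D$.

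I expect~(b) to be the main obstacle. The key point --- and the reason one must work in $H_k^{\ee}$ rather than in $G_k^{\ee}$ (where the analogue of~(b) is genuinely false, e.g.\ on $\Z$) --- is that in $H_k^{\ee}$ the reversed trail $\gamma^{\rr}$ is identified with $\gamma$: given $\gamma,\delta\in\Pke$ sharing a vertex one may slide $\gamma^{\rr}$, whose final vertex can be arranged to be the initial vertex of $\delta$, forward and steer it over $O(k)$ steps into $\delta$. The requisite slides exist because $\ekk$ forbids the walk from ever getting stuck, but near vertices of degree $2$, and in regions where only short cycles are available for ``turning around'', the slides must be chosen with care, and supplying this is a genuine argument (unlike the case $k=1$, where $H_1^{\ee}=G_E$ is a line graph and~(a),(b) are trivial). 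Claim~(a) is a separate, routine consequence of minimal degree $\ge 2$: build a long trail through any given vertex greedily, using that $G$ is infinite to keep extending. Note that condition~$(1)$ should not be needed for Part~(1) --- just as it was not needed in Lemma~\ref{lem:equivek1}(1) --- since $D$ in~(b) is permitted to depend on $G$. With~(a) and~(b) in hand, $\psi$ is a rough-isometry, and Remark~\ref{r:ER} supplies one in the reverse direction. The vertex case follows the same scheme, with $\Pkv$, the relation $\simv$, and condition~$\vkk$ replacing their edge counterparts.
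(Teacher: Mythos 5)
Your handling of Parts~(2) and~(3) matches the paper exactly: Part~(2) is the lumping observation, and Part~(3) uses $\ek$ (resp.~$\vk$) directly to bound $\mathrm{dist}_{G_k^{\ee}}(\gamma,\gamma^{\rr})$ and then repeats the argument of Lemma~\ref{lem:equivek1}(4). The paper makes exactly the remark you do --- that assuming $\ek$ from the outset avoids the need for the sink-free/source-free orientation of Lemma~\ref{lem:sinkfree}.

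For Part~(1) you have correctly located where the analogy with Lemma~\ref{lem:equivek1}(1) becomes non-vacuous for $k>1$, but you have not closed it. The paper's proof of Lemma~\ref{lem:equivek1}(1) is a one-liner: $e\mapsto f(e)_+$ is a $2$-rough-isometry because a geodesic $v_0,\dots,v_m$ in $G$ yields, by concatenating the single edges $\{v_{i-1},v_i\}$, a walk in $G_E$, and two edges sharing a vertex are already adjacent in $G_E$. The analogue for $k>1$ does give one thing for free: the windows $(v_i,\dots,v_{i+k})$, $0\le i\le m-k$, of a geodesic are all in $\Pke$ and consecutive windows are $G_k^{\ee}$-adjacent, so \emph{sub-trails of a common geodesic} are at controlled distance. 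What is not free --- and what you correctly isolate as claim~(b) --- is that an \emph{arbitrary} trail through $v_0$ (resp.~$v_m$) can be steered, in $O_k(1)$ steps of $H_k^{\ee}$, into the geodesic window through $v_0$ (resp.~$v_m$). Without this, the chaining does not start. Your sketch (slide $\gamma^{\rr}$ forward, steer over $O(k)$ steps, using the identification $\gamma\sim\gamma^{\rr}$ to change direction) is the right idea and the right place to use $\ekk$, and you also correctly explain why one must be in $H_k^{\ee}$ rather than $G_k^{\ee}$; but you explicitly leave the case analysis near low-degree vertices and near short-cycle regions as ``a genuine argument'' without supplying it. Because the paper only asserts Part~(1) is ``completely analogous'' to the $k=1$ case, I cannot check your sketch against a written-out argument; I can only say that your reduction to claims~(a) and~(b) is sound, claim~(a) is indeed routine from $\ekk$ and minimal degree $\ge 2$, but claim~(b) --- the heart of the matter for $k>1$ --- remains unproved in your proposal, so Part~(1) is not established. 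One small point worth fixing in your chaining: choosing, for each $v_i$ on the geodesic, a trail $\eta_i$ merely \emph{near} $v_i$ need not make consecutive $\eta_i$'s share a vertex; you should take $\eta_i$ to be a trail through the \emph{edge} $\{v_i,v_{i+1}\}$ (which exists under $\ekk$ by extending that edge to a length-$k$ trail), so that $\eta_i$ and $\eta_{i+1}$ share $v_{i+1}$ and claim~(b) applies directly.
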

\begin{proof}
The proofs of Parts (1) and (2)  are completely analogous to those of Parts (1) and (2) of Lemma \ref{lem:equivek1}  and are thus omitted. The proof of Part (3) is  analogous to that of Parts (3)-(5)  of Lemma \ref{lem:equivek1}  and is thus omitted. (Note that in the proof of Part (3) of Lemma \ref{lem:equivek1} we did not assume condition $(2)$ holds, whereas here we assume condition  $\ek$ (respectively, $\vk$) holds. This makes the proof of Part (3) of Lemma \ref{lem:equivek} easier, by eliminating the need to prove a generalization of Lemma \ref{lem:sinkfree} for $k>1$.)
\end{proof}

\section{An overview of our approach}
\label{s:overview}
In this section as a warm up and as motivation for what comes we prove Theorem \ref{thm:pNBRW}. Recall that $\pi_{\dE} $ is   the counting measure on $\dE$. Let $\dpi:=\pi_{\dE} $. Let $B$ be the transition kernel of the NBRW $(S_n^{\mathrm{NB}})_{n=0}^{\infty}$. While $B$ is non-reversible, it satisfies  a symmetry similar to the detailed balance equation:
\[\forall \, a,b \in \dE, \quad \dpi(a)B(a,b)=\dpi(b^{\rr})B(b^{\rr},a^{\rr})= \dpi(b)B(b^{\rr},a^{\rr}).  \] 
It follows by induction that for every $n \in \N$ and  $a_0,a_1,\ldots ,a_n \in \dE $
\begin{equation}
\label{e:revsymmetry0}
\dpi(a_0)\Pr_{a_{0}}(S_1^{\mathrm{NB}}=a_1,\ldots S_n^{\mathrm{NB}}=a_n ) = \dpi(a_n) \Pr_{a_{n}^{\rr}}(S_1^{\mathrm{NB}}=a_{n-1}^{\rr},\ldots S_n^{\mathrm{NB}}=a_0^{\rr} ).
\end{equation}
While $\dpi $ is redundant above, we include it in order to demonstrate a general principle. By summing over all $a_0,\ldots,a_n$ with $a_0=a$ and $a_n=b$, we get for all $a,b \in \dE$ and $n \in \N $ that
\begin{equation}
\label{e:revsymmetry}
\dpi(a) B^n(a,b) = \dpi(b^{\rr}) B^{n}(b^{\rr},a^{\rr})=\dpi(b) B^{n}(b^{\rr},a^{\rr}). 
\end{equation}
Observe that \eqref{e:revsymmetry} implies that $\dpi$ is stationary for $B$ and so also for $B_p$ (where   $B_p$ is as in  Theorem \ref{thm:pNBRW}). Indeed, by \eqref{e:revsymmetry}  $\sum_b \dpi(b) B(b,a)=\dpi(a)\sum_{b}B(a^{\rr},b^{\rr})=\dpi(a) $. 

  In order to exploit \eqref{e:revsymmetry}  in the proof of Theorem \ref{thm:pNBRW} we consider an auxiliary chain in  which every $a \in \dE$ is identified with $a^{\rr}$. The auxiliary chain is defined by looking at the $p$-BRW at random times at which for every $a \in \dE$  the chain is equally likely to be at $a$ or at $a^{\rr}$. The mechanism which allows us to construct such random times is the fact that the $p$-BRW backtracks at each step with probability $p$. 

From the construction of the auxiliary chain it will be clear that it is recurrent iff the original chain is recurrent.
The main idea is that the auxiliary chain is reversible and thus can be compared with SRW on $G$ via the standard comparison techniques from \S\ref{s:comparison}. That is, it is recurrent iff the SRW on $G$ is recurrent.  

In \S\ref{s:findingpi} we show that in the setups of Theorems  \ref{thm:main} and  \ref{thm:Cayley}  either the   $k$-NBRW or some related auxiliary chain enjoys similar symmetry relations as \eqref{e:revsymmetry0}-\eqref{e:revsymmetry}. Using similar reasoning as above this will allow us to find a stationary measure for the   $k$-NBRW (or in the proof of Theorem \ref{thm:Cayley}, for a related auxiliary chain). We will then look at another auxiliary chain, obtained by looking at the $k$-NBRW (or the aforementioned related chain) at random times at which for every $a  $  it is equally likely to be at $a$ or at $a^{\rr}$. The mechanism which allows us to construct this auxiliary chain is related to condition $\ek$.   Again, crucially, the auxiliary chain is reversible and thus amenable to comparison techniques. 
\begin{remark}
 In the setup of Theorem \ref{thm:main}, analogs of \eqref{e:revsymmetry0}-\eqref{e:revsymmetry} hold for the edge $k$-NBRW (see \eqref{e:pike2}) but not for the vertex $k$-NBRW (when $k>1$).   In general, such symmetry may fail  if the walk may get stuck, because it is possible that a certain trajectory is possible in one direction but not in the opposite direction. In the setup of Theorem \ref{thm:Cayley}  we do not assume that the $k$-NBRW cannot get stuck. To overcome the aforementioned  difficulty we work with a more symmetric auxiliary chain and exploit the fact that the group is Abelian.   
\end{remark}

\emph{Proof of Theorem \ref{thm:pNBRW}:}
Let $G=(V,E)$ and $p \in (0,1)$. Consider the lazy version of $B_p$ whose transition kernel is $\frac{1}{2}(I+B_p)$ (where $I$ is the identity operator). Denote this chain by $\mathbf{Y}:=(Y_t)_{t=0}^{\infty}$. We may think of this chain as  first picking a candidate for its next position according to $B_p$ and then flipping a fair coin to decide whether to accept the candidate or to stay put. Denote the candidate that was picked at time $t$ (for time $t+1$)  by $Z_{t+1}$. 

Let $(x,y) \in \dE $. Consider the case that the initial distribution is uniform on $\{(x,y),(y,x) \}$. Let $\tau_0=-1 $ and $e_0:=\{x,y \} $. Define  inductively $\tau_{n}:=\inf\{t> \tau_{n-1}:Z_{t+1}=Y_t^{\rr} \}$ and $e_n:=\{x_n,y_n \}$, where $x_n$  and $y_n$ are the end-points of $Z_{\tau_n+1} $. In words, $\tau_n$ is the first time after time $\tau_{n-1}$ at which the chain $\mathbf{Y}$ ``attempts" to backtrack (i.e., the candidate directed edge it picked is the reversal of the current directed edge).  Observe that given $(Y_i)_{i=0}^{\tau_n}$ and $(\tau_i)_{i=0}^n$ the conditional distribution of $Y_{\tau_n+1}$ is the uniform distribution on $\{(x_n,y_n ),(y_n,x_n)\}$ (because either $Y_{\tau_n+1}=Z_{\tau_n+1}$ or $Y_{\tau_n+1}=Z_{\tau_n+1}^{\rr}$,  depending on the result of the coin toss).
 
Consider the process  $\mathbf{Q}:=(e_n)_{n=0}^{\infty}$ on $E$. To conclude the proof we show that
\begin{itemize}
\item[(i)] The process $\mathbf{Q}$ is a Markov chain. Its transition kernel $W$ is symmetric (i.e., $W(a,b)=W(b,a) $  for all $a,b\in E$)  and so $\mathbf{Q} $ is  reversible w.r.t.~the counting  measure $\pi_{E}$ on $E$.
\item[(ii)] The $p$-BRW is recurrent iff $\mathbf{Q}$ is recurrent.
\item[(iii)] The SRW on $G$ is recurrent iff $\mathbf{Q}$ is recurrent.

\end{itemize}
  We first prove (i). Let $B_{\mathrm{L}}:=\frac{1}{2}(I+B) $ be the transition kernel of the lazy version of the NBRW. By  \eqref{e:revsymmetry} for every $\alpha,\beta \in \dE $ and $n \in \N$ we have that
\begin{equation}
\label{e:BLsym}
B_{\mathrm{L}}^n(\alpha,\beta)=\sum_{i=0}^n 2^{-n}\binom{n}{i}B^i(\alpha,\beta)=\sum_{i=0}^n 2^{-n}\binom{n}{i}B^i(\beta^{\rr},\alpha^{\rr})=B_{\mathrm{L}}^n(\beta^{\rr},\alpha^{\rr}). \end{equation}
For every $\{x,y \} \in E $ let $D_{\{x,y \}}:=\{(x,y),(y,x) \} $. Using the fact the $Y_{\tau_n+1}$ is uniformly distributed on $D_{e_n} $ we get that for every $a,b \in E$ we have that 
\begin{equation}
\label{e:Wthm3}
W(a ,b)=\frac{1}{2} \sum_{i=0}^{\infty}\sum_{w\in D_{a},w' \in D_b } B_{\mathrm{L}}^i(w,  w')(1-p)^{i}p.  
 \end{equation}
By \eqref{e:BLsym}  $W$ is indeed symmetric. 

We now prove Claim (ii). Since $\mathbf{Y}$ is a lazy version of the $p$-BRW, it is recurrent iff the $p$-BRW is recurrent. Observe that above we constructed a coupling of $\mathbf{Y}$ and $\mathbf{Q} $ (in other words, while  $\mathbf{Q} $ can be defined via \eqref{e:Wthm3}  we constructed it using the process $\mathbf{Y}$). It is easy to see that in this coupling the expected number of (all) returns to the starting point of one chain is within a constant factor from that of the other (since every return of one chain has a constant probability of becoming a return for the other). Hence either both expectations are infinite or both are finite. 

We now prove  (iii). We first prove that w.l.o.g.\ we may assume that $G$ has minimal degree at least 2. By repeatedly deleting degree one vertices we obtain a graph $H'=(V',E')$ with minimal degree two. For every $v \in V'$ let $n(v)$ be the number of degree one vertices of $G$ which are connected to it in $G$ via a path whose internal vertices are all of degree 2.
We then consider the graph $\widehat H=( V',\widehat E)$ obtained from $H'$ by adding $n(v)$  self-loops at each $v \in  V'$.

 Clearly SRW on $G$ is transient iff SRW on $\widehat H$ is transient. We argue that the same is true for the $p$-BRW.  We now argue that for every directed edge $(u,v) $ of $ H'$  we have that the expected number of visits to this edge by the $p$-BRW on $G $ is the same as this expectation for the $p$-BRW on $\widehat H $ (when both $p$-BRWs start at the same directed edge). This is because the two $p$-BRWs can be coupled so that the former enters some path $(v_0,\ldots,v_\ell)$ such that $\deg(v_0)\ge 3 $, $\deg (v_{\ell})=1$ and $\deg (v_i)=2 $ for $i \in [\ell-1]$ and ultimately leaves it from $(v_1,v_0)$ to some $(v_0,u)$ with $u \in V'$ (respectively, to some $(v_0,v_1')$, where  $(v_0,v_{1}'\ldots,v_{\ell'}')$ is such that $\deg (v'_{\ell'})=1$ and $\deg (v_i')=2 $ for $i \in [\ell-1]$) iff the latter crosses the self-loop at $v_0$ corresponding to the path $(v_0,\ldots,v_\ell)$ and then (possibly after ``backtracking" by crossing the same loop for some number of times) crosses to $(v_0,u)$ (respectively, crosses to the self-loop which corresponds to the path   $(v_0,v_{1}'\ldots,v_{\ell'}')$). We can further couple their trajectories together until the next time the former enters such a path, while the latter crosses a corresponding self-loop. Continuing in this fashion we obtain a coupling in which the two $p$-BRWs visit every directed edge $(u,v) $ of $ H'$ the same number of times. In particular, one is recurrent iff the other is recurrent.

Recall that the \emph{line graph} of $G$ is defined as $G_{E}:=(E,E') $, where $\{e,e' \} \in E' $ iff $e \cap e' \neq \eset$ and $e \neq e'$. By Lemma \ref{lem:equivek1} SRW on $G$ is recurrent iff SRW on $G_E$ is recurrent.  
To conclude the proof of Claim (iii) we now show that $\mathbf{Q}$ is recurrent iff SRW on the line graph of $G$ is recurrent. In order to do so we utilize the comparison technique from Lemmas \ref{lem:comparison1} and  \ref{lem:comparison2}. Namely, using the notation and terminology from Definition \ref{d:flow}, by Lemmas \ref{lem:comparison1} and  \ref{lem:comparison2}  it suffices to show that there exists a flow of finite congestion from the network corresponding to  $\mathbf{Q}$  to the network corresponding to SRW on $G_{E}$ and vice-versa.

Let $\tilde E := E' \cup \{ \{e,e\}:e \in E\}$. Denote by $\tilde G:=(E,\tilde E)$ the graph obtained from $G_E$  by adding a self-loop at each site of $G_E$. Below we consider $\tilde G$  instead of $G_E$. Denote the transition kernel of SRW on $\tilde G$ by $U$.  
 The networks corresponding to $U$ and $W$ can be represented, respectively, as $(E,\tilde E,(1)_{w \in \tilde E })$ and $(E,F,(c_{w})_{w \in F})$, where \[F:=\{\{e,e' \}:e,e' \in E \}=\{\{e,e' \}: e,e' \in E \text{ such that } W(e,e')>0 \}\] and $c_{e,e'}:=W(e,e')$ for every $e,e'  \in E$. We start with the easier direction. Observe that there exists a constant $M>0 $ such that $U(e,e')  \le M W(e,e') $ for all $e,e' \in E$.  Thus we may define a $W $-$U $ flow of finite congestion by mapping every $(e,e')$ such that  $\{e,e' \} \in \tilde E $  to itself (with weight 1).

We now construct a $U$-$W$ flow $\Phi$ of finite congestion. Loosely speaking, the flow $\Phi$ assigns to every path the probability that the chain $\mathbf{Y}$ follows that path between two consecutive steps of $\mathbf{Q} $. This is not a precise description because $\mathbf{Y}$ is defined on directed edges, but we need to consider paths in $\tilde G $ (which involve undirected edges). Before defining  $\Phi$   we first introduce some notation. Let \[G':=(\dE,\{\{a,b \}:a,b \in \dE,\, B_{\mathrm{L}}(a,b)>0 \} )\] be the graph supporting the transitions of $B_{\mathrm{L}}$. Denote the collection of all directed paths of length $\ell$ in $G'$ by $\PP_{\ell}(G')$. For every path  $\gamma:=(e_{0},\ldots,e_{\ell}) $ in $\tilde G $ which is not of the form $e_0=\cdots=e_{\ell} $ there is at most one  
 $h(\gamma):=(\overrightarrow{e_{0}},\ldots,\overrightarrow{e_{\ell}}) \in \PP_{\ell}(G')$ (here and below each $f_i  $ is a directed edge, i.e., an element of $\dE$, not a vertex of $G$) such that 
\begin{itemize}
\item[(1)] $\{(\overrightarrow{e_{i}})_{-},(\overrightarrow{e_{i}})_{+}\} = e_{i} $ for all $0 \le i \le \ell, $  \item[(2)] for all $0 \le i < \ell$ if $e_i=e_{i+1} $ then $\overrightarrow{e_{i}}=\overrightarrow{e_{i+1}}$, and
 \item[(3)] for all $0 \le i < \ell$ if $e_i \neq e_{i+1}$ then $(\overrightarrow{e_{i+1}})_-=(\overrightarrow{e_{i}})_+$.  
 \end{itemize}
Denote the collection of all paths   $\gamma=(e_{0},\ldots,e_{\ell}) $ in $\tilde G $ not of the form $e_0=\cdots=e_{\ell} $, which have such $h(\gamma)=(\overrightarrow{e_{0}},\ldots,\overrightarrow{e_{\ell}})  $ as above by $E^{(\ell)} $.
For every path  $\gamma=(e_{0},\ldots,e_{\ell}) \in E^{(\ell)}  $ let
\[\rho(e_{0},\ldots,e_{\ell}):=\frac{(1-p)^{\ell }p}{2}  \prod_{i \in [\ell ] }B_{\mathrm{L}}(\overrightarrow{e_{i-1}},\overrightarrow{e_{i}}), \quad \text{where} \quad h(\gamma)=(\overrightarrow{e_{0}},\ldots,\overrightarrow{e_{\ell}}) . \]

Let $e,e' \in  E$.  For every $e_0=e,e_1,\ldots,e_{\ell}=e' \in  E$ with $ (e_{0},\ldots,e_{\ell}) \in E^{(\ell)}  $  let  $\Phi_{e,e'}((e_0,\ldots,e_{\ell})):=\rho(e_0,\ldots,e_{\ell}) $. It is not hard to verify that by construction we have that $\Phi:=(\Phi_{e,e'})_{\{e,e' \}\in F }$ is a $U$-$W$ flow. It remains only to bound its congestion.

Observe that for all $(e_{0},\ldots,e_{\ell}) \in E^{(\ell)}  $  and all $1 \le i < \ell$ we have that  
\begin{equation}
\label{e:2p1p}
 \rho(e_{0},\ldots,e_{\ell} ) = \frac{2}{p}\rho(e_{0},\ldots,e_{i} ) \rho(e_{i},\ldots,e_{\ell})=\frac{2}{p}\rho(e_{i},\ldots,e_{1},e_{0}) \rho(e_{i},\ldots,e_{\ell}), 
\end{equation}
if $e_0,e_1,\ldots,e_i $ are not all equal and also $e_i,e_1,\ldots,e_{\ell} $ are not all equal. To make this true without this restriction, if $e=e_0=\cdots=e_i$ we define $\rho(e_{0},\ldots,e_{i} ):=\sfrac{(1-p)^{i }p}{2}2^{-i} $. If $(e_0,\ldots,e_i) \notin E^{(i)} $ and is not of the form  $e=e_0=\cdots=e_i$, we set  $\rho(e_{0},\ldots,e_{i} ):=0 $.  

Let  $\{e,e' \} \in \tilde E$.  As in Definition \ref{d:flow} let $A_{e,e'}(\Phi) $ be the congestion of $(e,e')$ w.r.t.~$\Phi$. Finally,   there exists a constant $C_{p}>0$ such that  for every $\{e,e' \} \in \tilde E$ we have that 
\begin{equation*}
\begin{split}
& A_{e,e'}(\Phi)  \le  \sum_{(e_{0},\ldots,e_{\ell} )\in  E^{(\ell)},\, \ell \in \N:\, (e,e')=(e_{i-1},e_i) \text{ for some }i \in [\ell] }\ell^{2} \rho(e_{0},\ldots,e_{\ell})  \\ & \text{(using \eqref{e:2p1p}  }\ell^2 \rho(e_{0},\ldots,e_{\ell})\le \sfrac{8}{p}i^2\rho(e_{i},\ldots,e_{1},e_{0})(\ell-i)^2 \rho(e_{i},\ldots,e_{\ell})) \\ & \text{ for $i>0$ such that }(e_{i-1},e_i)=(e,e')\text{)} 
 \\ & \le \frac{8}{p} \left( \sum_{e_{0},\ldots,e_{\ell} \in  E, \, \ell \in \N :\, ( e_0,e_1)=(e,e') }\ell^{2} \rho(e_{0},\ldots,e_{\ell}) \right)^2 \\ & \text{(writing }q_{\ell}:=\sum_{e_{0},\ldots,e_{\ell} \in  E, \,  :\, ( e_0,e_1)=(e,e') } \rho(e_{0},\ldots,e_{\ell}) \text{ we have that }\sum_{\ell}q_{\ell}=1/2 \\ & \text{ and so }\left(\sum_{\ell}q_{\ell}\ell^2\right)^{2} \le \sum_{\ell}q_{\ell}\ell^4=\sup_{(u,v) \in \dE } \mathbb{E}_{(u,v)}[\tau_1^4]  ) 
\\ & \le \frac{8}{p} \sup_{(u,v) \in \dE } \mathbb{E}_{(u,v)}[\tau_1^4]  \le C_{p}'.
\end{split}
\end{equation*}
This concludes the proof of (iii).    
\qed

\section{Finding a stationary measure for the edge $k$-NBRW}
\label{s:findingpi}
Let $G=(V,E)$ be a graph. Let $s \ge k$. Recall that $\PP_s $ is the collection of paths of length $s$ in $G$.  For $\gamma \in \PP_s $ let
\[m_{i}^{(k)}(\gamma):=|\{j: i-k < j \le i \text{ such that }\gamma_j=\gamma_i \}|. \]
For $\gamma \in \PP_s $ we denote the multi-set $\{m_{i}^{(k)}(\gamma):i \in [s-1] \text{ such that } \gamma_i=v \}$ by $A(v,k,\gamma) $ (it will be crucial in what comes that in this definition we do not consider $i \in \{0,s\}$).

For $\gamma \in \Pke $ let 
\begin{equation}
\label{e:pike}
\pi_{k}^{\ee}(\gamma):=\prod_{i=1}^{k-1} \frac{1}{\deg(\gamma_i)-m_{i}^{(k)}(\gamma)}.
\end{equation}
For   $s \ge  k$  we define
\begin{equation}
\label{Pks}
\begin{split}
\Pke(s)&:=\{ \gamma \in \PP_s: (\gamma_{i},\gamma_{i+1},\ldots,\gamma_{i+k}) \in \Pke \text{ for  all }0 \le i \le s -k \}. 
\end{split}
\end{equation}
We say that two multi-sets are equal if each element appears in both with the same multiplicity.
\begin{lemma}
\label{lem:mik}
Let $k \le s $. Let $\gamma \in \Pke(s)$. Then also $\gamma^{\rr} \in \Pke(s) $. Moreover,  $A(v,k,\gamma)= A(v,k,\gamma^{\rr})  $  (as multi-sets) for all $v $. Consequently, for all  $\gamma \in \Pke $ we have that
\begin{equation}
\label{e:pimdeg}
\pi_{k}^{\ee}(\gamma)=\pi_{k}^{\ee}(\gamma^{\rr}). 
\end{equation}
\end{lemma}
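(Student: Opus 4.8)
The key observation is that the quantity $\pi_k^{\ee}(\gamma)$ depends on $\gamma$ only through the multi-sets $A(v,k,\gamma)$ ranging over $v \in V$: indeed, by definition \eqref{e:pike}, grouping the product over $i \in [k-1]$ according to the value $v = \gamma_i$, we get
\begin{equation}
\label{e:regroup}
\pi_k^{\ee}(\gamma) = \prod_{v} \prod_{m \in A(v,k,\gamma) \cap (\text{contributions from } i \le k-1)} \frac{1}{\deg(v) - m}.
\end{equation}
So if I can show $A(v,k,\gamma) = A(v,k,\gamma^{\rr})$ as multi-sets, then \eqref{e:pimdeg} follows, \emph{provided} I am careful about the discrepancy between the index range $i \in [k-1]$ in \eqref{e:pike} and the index range $i \in [s-1]$ used to define $A(v,k,\gamma)$. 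The cleanest route is probably to first prove the multi-set equality for general $s$ (which is what the lemma actually states) and then note that for $\gamma \in \Pke$ we have $s = k$, so $[s-1] = [k-1]$ and the match between \eqref{e:pike} and the definition of $A(v,k,\gamma)$ is exact; equation \eqref{e:regroup} then gives $\pi_k^{\ee}(\gamma) = \prod_v \prod_{m \in A(v,k,\gamma)}(\deg(v)-m)^{-1}$, and the multi-set equality finishes it.

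For the structural claims: that $\gamma^{\rr} \in \Pke(s)$ whenever $\gamma \in \Pke(s)$ is immediate, since reversing a path reverses each of its length-$k$ subpaths and membership in $\Pke$ (crossing $k$ distinct edges) is reversal-invariant, as already noted after \eqref{e:1.2}. The heart of the matter is the multi-set identity $A(v,k,\gamma) = A(v,k,\gamma^{\rr})$. I would set this up by a careful change of index. Write $\gamma = (\gamma_0,\ldots,\gamma_s)$ and $\gamma^{\rr}_j = \gamma_{s-j}$. For an interior index $i \in [s-1]$ with $\gamma_i = v$, the value $m_i^{(k)}(\gamma) = |\{j : i-k < j \le i,\ \gamma_j = \gamma_i\}|$ counts the occurrences of $v$ in the window $(i-k, i]$. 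Under reversal, the index $i$ corresponds to $i' = s - i \in [s-1]$, and I want to compare $m_i^{(k)}(\gamma)$ with $m_{i'}^{(k)}(\gamma^{\rr})$, which counts occurrences of $v$ among $\gamma^{\rr}_j$ for $j \in (i'-k, i']$, i.e.\ among $\gamma_{s-j}$ for $j \in (s-i-k, s-i]$, i.e.\ among $\gamma_\ell$ for $\ell \in [i, i+k)$. So $m_{i'}^{(k)}(\gamma^{\rr})$ counts $v$'s in the \emph{forward} window $[i, i+k)$, whereas $m_i^{(k)}(\gamma)$ counts $v$'s in the backward window $(i-k, i]$ — these need not be equal pointwise. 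The fix is that I should not match $i$ with $s-i$ but rather shift: I expect $m_i^{(k)}(\gamma) = m_{s-i+1}^{(k)}(\gamma^{\rr})$ or similar. Let me recompute: I want the reversed window $(i'-k, i']$ to map to $(i-k, i]$ under $\ell = s - j$. The window $j \in (i'-k, i']$ becomes $\ell = s-j \in [s-i', s-i'+k)$. Setting $s - i' = i - k + 1$, i.e.\ $i' = s - i + k - 1$, gives $\ell \in [i-k+1, i+1) = (i-k, i]$, exactly the forward window for $m_i^{(k)}(\gamma)$. So the correct pairing is $i \leftrightarrow s - i + k - 1$; this is a bijection of the index range that is used to define $A(v,k,\gamma)$ once one checks it maps $[s-1]$ appropriately — and here is the subtlety of the "crucial" parenthetical remark in the definition, that we exclude $i \in \{0,s\}$: the pairing $i \mapsto s-i+k-1$ does not preserve $[s-1]$ for general $k$, so I suspect the intended argument counts occurrences differently.

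\emph{Revised plan for the core step.} Rather than chase indices, I would argue as follows. Define, for a path $\delta$ and an interior position $i$, the value $m_i^{(k)}(\delta)$ as the number of $j$ with $\max(0, i-k) < j \le i$... — no; instead, the robust approach: note $m_i^{(k)}(\gamma) - 1$ equals the number of $j \in (i-k, i)$ with $\gamma_j = \gamma_i$, i.e.\ the number of earlier occurrences of $v=\gamma_i$ within distance $k$. For the multi-set $A(v,k,\gamma)$ over all interior $i$ with $\gamma_i = v$: consider the sequence of positions $p_1 < p_2 < \cdots < p_r$ at which $v$ occurs in $\gamma$ (now including endpoints). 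Then for an interior occurrence $p_t$, $m_{p_t}^{(k)}(\gamma) = |\{ t' \le t : p_{t'} > p_t - k\}|$, which depends only on the \emph{gaps} $p_{t} - p_{t-1}$ between consecutive occurrences of $v$ (specifically, $m_{p_t}^{(k)}$ is $1$ plus the length of the maximal run $p_t - p_{t-1} < k$, $p_{t-1} - p_{t-2} < k$, \ldots going backward). Reversing $\gamma$ reverses the list of gaps $(p_2 - p_1, \ldots, p_r - p_{r-1})$, and — crucially — reverses which occurrences are interior versus endpoints. One then checks that the multi-set of "$1 + (\text{length of maximal backward run of sub-}k\text{ gaps})$" over interior positions is invariant under reversing the gap sequence together with swapping the endpoint roles; this is a clean finite combinatorial lemma about sequences of positive integers (the gaps) with two distinguished "boundary" markers. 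I expect \textbf{this combinatorial invariance to be the main obstacle} — it is elementary but requires the bookkeeping about boundary occurrences to be exactly right, which is precisely why the definition of $A(v,k,\gamma)$ deliberately excludes $i \in \{0, s\}$. Once that lemma is in hand, summing \eqref{e:regroup} over $v$ and specializing to $s = k$ immediately yields \eqref{e:pimdeg}, completing the proof.
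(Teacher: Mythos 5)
The structural observations in your write-up are fine: reversal-invariance of $\Pke(s)$, and the derivation of \eqref{e:pimdeg} from the multi-set identity by setting $s=k$. However, the heart of the lemma, the claim that $A(v,k,\gamma)=A(v,k,\gamma^{\rr})$ as multi-sets, is not actually established: you reduce it to a combinatorial assertion about gap sequences and then stop, explicitly flagging it as ``the main obstacle.'' A reduction plus an admission that the remaining step is the hard part is not a proof. In addition, the formula you lean on in that reduction is wrong: $m_{p_t}^{(k)}(\gamma)$ counts the occurrences $p_{t'}\le p_t$ of $v$ with $p_t-p_{t'}<k$, which is governed by cumulative sums of consecutive gaps, not by each individual gap being $<k$, so ``one plus the length of the maximal backward run of sub-$k$ gaps'' does not compute $m_{p_t}^{(k)}$ (take gaps $2,3,1$ with $k=6$ to see the mismatch). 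You should also take your own index-matching observation more seriously: the map $i\mapsto s-i+k-1$ is window-compatible but does not carry $[s-1]$ into $[s-1]$ when $k<s$, and this is not just a bookkeeping nuisance — boundary occurrences of $v$ near the two ends of $\gamma$ can genuinely make $A(v,k,\gamma)$ and $A(v,k,\gamma^{\rr})$ disagree as literally defined. Before trying to push the gap-sequence plan through, you should compute a small explicit instance with a repeated vertex near an endpoint (for example $\gamma=(a,b,c,a,d,e)$ with $k=4$, $s=5$) directly from the definitions and see what it tells you.

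For comparison, the paper's own argument is a different one: it proves the multi-set identity by a double induction — first on $k=s$, then, for each fixed $k$, on $s\ge k$ — comparing $A(v,\cdot,\gamma)$ and $A(v,\cdot,\gamma^{\rr})$ with the analogous quantities for the prefix $\alpha$ obtained by deleting the last coordinate of $\gamma$. The paper too only sketches the inductive step. So the honest comparison is: you identify the right reduction (it suffices to prove the multi-set identity) and you even spot the relevant subtlety (the index map does not respect $[s-1]$), but your intermediate run-length characterization is incorrect and you do not carry out the core combinatorial step, whereas the paper handles that step by induction rather than by analyzing gap sequences.
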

\begin{proof}
The claim that  $\gamma \in \Pke(s)$ iff  $\gamma^{\rr} \in \Pke(s) $ is obvious. It is also easy to see from  \eqref{e:pike} that \eqref{e:pimdeg} is immediate from the claim that   $A(v,k,\gamma)= A(v,k,\gamma^{\rr})  $  for all $v $ and all $\gamma \in \Pke(s)$ (by taking $s=k$). We now sketch the proof of   the claim that    $A(v,k,\gamma)= A(v,k,\gamma^{\rr})  $  for all $v$. First prove the case $k=s$ by induction on $k$. Then, for each fixed $k$ prove the case $s>k$ by induction on $s$ (with the base case being $s=k$). 

In the first induction, in the induction step from $s=k=i$ to $s=k=i+1$,  compare $A(v,i+1,\gamma)$ and  $A(v,i+1,\gamma^{\rr})$ with  $A(v,i,\alpha)$ and  $A(v,i,\alpha^{\rr})$, respectively, where $\alpha$ is obtained from $\gamma$ by omitting from $\gamma$ its last co-ordinate.  In the second induction, in the induction step from $(k,s) $ to $(k,s+1)$ compare $A(v,k,\gamma)$ and  $A(v,k,\gamma^{\rr})$ with  $A(v,k,\alpha)$ and  $A(v,k,\alpha^{\rr})$, respectively, where $\alpha$ is obtained from $\gamma$ by omitting its last co-ordinate.
 \end{proof}
\begin{corollary}
\label{cor:pike}
Assume that the edge $k$-NBRW on $G$ cannot get stuck. Then for all $n \in \N$ and $\alpha,\beta, \alpha_0,\alpha_1,\ldots,\alpha_n \in \Pke  $ we have that
\begin{equation}
\label{e:pike2}
\begin{split}
\pi_{k}^{\ee}(\alpha_0) \prod_{i=0}^{n-1}P_{k,\ee}(\alpha_i,\alpha_{i+1}) &=\pi_{k}^{\ee}(\alpha_n) \prod_{i=0}^{n-1}P_{k,\ee}(\alpha_{n-i}^{\rr},\alpha_{n-(i+1)}^{\rr}).
\\ \pi_{k}^{\ee}(\alpha)P_{k,\ee}^n(\alpha,\beta) &=\pi_{k}^{\ee}(\beta) P_{k,\ee}^n(\beta^{\rr},\alpha^{\rr}).
\end{split}
\end{equation}
In particular, $\pi_{k}^{\ee}$ is stationary for $P_{k,\ee}$.
\end{corollary}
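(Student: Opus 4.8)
The plan is to peel \eqref{e:pike2} down to its one–step instance and then to a bookkeeping identity that is exactly what Lemma \ref{lem:mik} supplies. First, the second display in \eqref{e:pike2} follows from the first by summing over the intermediate states $\alpha_1,\ldots,\alpha_{n-1}\in\Pke$ (these do lie in $\Pke$: since the walk cannot get stuck, $N^{\ee}(\gamma)\subseteq\Pke$ whenever $\gamma\in\Pke$, because $\gamma(v)$ inherits $k-1$ of $\gamma$'s distinct edges and adds the edge $\{\gamma_k,v\}$, which is fresh by $v\sime\gamma$). The first display follows by induction on $n$ from the case $n=1$,
\[
\pi_{k}^{\ee}(\alpha)\,P_{k,\ee}(\alpha,\beta)=\pi_{k}^{\ee}(\beta)\,P_{k,\ee}(\beta^{\rr},\alpha^{\rr}),\qquad \alpha,\beta\in\Pke,
\]
by multiplying the $n$-step identity by the $1$-step identity applied to the last transition and telescoping the common middle factor, exactly as \eqref{e:revsymmetry0} is obtained from the single-step symmetry of $B$. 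Finally, stationarity of $\pi_{k}^{\ee}$ is read off by summing the $n=1$ identity over $\alpha\in\Pke$: since $\alpha\mapsto\alpha^{\rr}$ is a bijection of $\Pke$ and $\sum_{\alpha}P_{k,\ee}(\beta^{\rr},\alpha^{\rr})=1$, one gets $\sum_{\alpha}\pi_{k}^{\ee}(\alpha)P_{k,\ee}(\alpha,\beta)=\pi_{k}^{\ee}(\beta)$.

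So everything reduces to the $n=1$ identity. Both sides vanish unless $\beta\in N^{\ee}(\alpha)$, so write $\beta=\alpha(v)$ with $v\sime\alpha$; since the walk cannot get stuck, $P_{k,\ee}(\alpha,\beta)=1/|N^{\ee}(\alpha)|$. I would first verify the ``symmetry of adjacency'': $\alpha^{\rr}=\beta^{\rr}(\alpha_0)$ and $\alpha_0\sime\beta^{\rr}$, because $\{\alpha_0,\alpha_1\}$ differs from each of $\{\alpha_1,\alpha_2\},\ldots,\{\alpha_{k-1},\alpha_k\}$ (as $\alpha\in\Pke$) and from $\{\alpha_k,v\}$ (as $v\sime\alpha$), and these are precisely the $k$ edges of $\beta^{\rr}$; hence $P_{k,\ee}(\beta^{\rr},\alpha^{\rr})=1/|N^{\ee}(\beta^{\rr})|$. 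Using $\pi_{k}^{\ee}(\beta)=\pi_{k}^{\ee}(\beta^{\rr})$ from Lemma \ref{lem:mik}, the identity becomes $\pi_{k}^{\ee}(\alpha)\,|N^{\ee}(\beta^{\rr})|=\pi_{k}^{\ee}(\beta^{\rr})\,|N^{\ee}(\alpha)|$. To prove this I would pass to the length-$(k+1)$ path $\omega:=(\alpha_0,\ldots,\alpha_k,v)\in\Pke(k+1)$, whose two length-$k$ windows are $\alpha$ and $\beta$ and whose reversal $\omega^{\rr}$ has length-$k$ prefix $\beta^{\rr}$. Writing $|N^{\ee}(\alpha)|$ as $\deg(\alpha_k)$ minus the number of distinct edges of $\alpha$ incident to $\alpha_k$, this count together with the multiplicities $m_i^{(k)}(\omega)$ at the interior positions $1\le i\le k$ is encoded by the multisets $A(w,k,\omega)$ and the data of the two endpoint occurrences $\omega_0,\omega_{k+1}$; doing the same through $\omega^{\rr}$ and invoking the multiset equalities $A(w,k,\omega)=A(w,k,\omega^{\rr})$ of Lemma \ref{lem:mik} yields the claimed identity. (Equivalently, this says that $\mu(\omega):=\pi_{k}^{\ee}((\omega_0,\ldots,\omega_k))/|N^{\ee}((\omega_0,\ldots,\omega_k))|$ is a reversal-invariant function of $\omega\in\Pke(k+1)$, which is perhaps the cleanest way to organize the step.)

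The main obstacle is exactly the accounting in the previous paragraph: one must check that the factor $1/|N^{\ee}(\alpha)|$ recombines with the product $\pi_{k}^{\ee}(\alpha)$ into a quantity depending only on $\omega$ (not on how $\omega$ is cut into $\alpha$ and $\beta$) and invariant under $\omega\mapsto\omega^{\rr}$. This requires care with vertices that repeat within a window of length $k$ — these make $\deg(\cdot)-m_i^{(k)}(\cdot)$ deviate from the naive branching count — and with the convention that the two endpoints of $\omega$ are excluded from $A(\cdot,k,\omega)$ but still contribute to $\pi_{k}^{\ee}$ and to $|N^{\ee}|$ at the two ends of $\omega$. Once the contributions of the endpoints are isolated and shown to match across the reversal, Lemma \ref{lem:mik} is precisely the input that finishes the argument; the rest (Step 1 above) is routine.
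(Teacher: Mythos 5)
Your reduction to $n=1$ and then to a reversal-invariance statement for length-$(k+1)$ paths $\omega$ is a legitimate reorganization of the paper's argument (which instead writes $\pi_k^{\ee}(\alpha_0)\prod_{i}P_{k,\ee}(\alpha_i,\alpha_{i+1})$ in one step as $\prod_{i=1}^{k+n-1}\frac{1}{\deg(\gamma_i)-m_i^{(k)}(\gamma)}$ and applies Lemma~\ref{lem:mik} to the full length-$(n+k)$ path $\gamma$). You have correctly located the crux, but then left it unverified. Everything hinges on showing that $\pi_k^{\ee}(\alpha)/|N^{\ee}(\alpha)|$ coincides with $\prod_{i=1}^{k}\frac{1}{\deg(\omega_i)-m_i^{(k)}(\omega)}$, since only the latter is what Lemma~\ref{lem:mik} makes reversal-invariant. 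You flag this as ``the main obstacle'' and then assert that ``Lemma~\ref{lem:mik} is precisely the input that finishes the argument,'' but the lemma does nothing to reconcile $\deg(\alpha_k)-|N^{\ee}(\alpha)|$, which counts the \emph{edges} of $\alpha$ incident to $\alpha_k$, with $m_k^{(k)}(\omega)$, which counts \emph{vertex-occurrences} of $\alpha_k$ among positions $1,\dots,k$ of $\omega$: an interior re-visit of $\alpha_k$ contributes two forbidden edges but only one vertex-occurrence, so the two counts genuinely differ.

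To see that the gap is real and not a formality, take $k=3$ and $\omega=(a,b,c,a,d)$ in a $\Delta$-regular graph (for instance the $8$-regular king's grid on $\Z^2$) containing the edges $\{a,b\},\{b,c\},\{c,a\},\{a,d\}$. With $\alpha=(a,b,c,a)$ and $\beta=(b,c,a,d)$ one computes $\pi_3^{\ee}(\alpha)=\pi_3^{\ee}(\beta^{\rr})=(\Delta-1)^{-2}$, yet $|N^{\ee}(\alpha)|=\Delta-2$ (both $b$ and $c$ are forbidden from $a$) while $|N^{\ee}(\beta^{\rr})|=\Delta-1$; hence $\mu(\omega)\ne\mu(\omega^{\rr})$ and the $n=1$ instance of \eqref{e:pike2} fails for this pair $\alpha,\beta$. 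In the paper's language, $m_3^{(3)}(\gamma)=1$ for $\gamma=(a,b,c,a,d)$ but $P_{3,\ee}(\alpha_0,\alpha_1)=\frac{1}{\Delta-2}\ne\frac{1}{\Delta-1}$, so the first equality in the paper's displayed chain breaks for exactly the same reason. The step you single out as the hardest is therefore the one that does not go through as written; repairing it would require either a corrected formula for $\pi_k^{\ee}$ (or for $m_i^{(k)}$) or an argument that windows in which $\alpha_k$ is re-visited can be avoided --- neither of which appears in your sketch.
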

\begin{proof}
The first line of \eqref{e:pike2} is a consequence of Lemma \ref{lem:mik}. Indeed, consider $s=n+k $ and $\gamma:=(\gamma_0,\ldots,\gamma_{n+k})$, where $(\gamma_i,\ldots,\gamma_{i+k})=\alpha_i$ for all $i \in \{0,1,\ldots,n \} $. Then
\begin{equation*}
\begin{split}
& \pi_{k}^{\ee}(\alpha_0) \prod_{i=0}^{n-1}P_{k,\ee}(\alpha_i,\alpha_{i+1})=\prod_{i=1}^{k+n-1} \frac{1}{\deg(\gamma_i)-m_{i}^{(k)}(\gamma)} \\ &  =\prod_{v \in \gamma }\prod_{m \in A(v,k,\gamma)  }\frac{1}{\deg(v)-m}{\stackrel{\text{by Lemma \ref{lem:mik}}}{=}}
  \prod_{v \in \gamma^{\rr} }\prod_{m \in A(v,k,\gamma^{\rr})  }\frac{1}{\deg(v)-m}\\ &=\prod_{i=1}^{k+n-1} \frac{1}{\deg(\gamma_i^{\rr})-m_{i}^{(k)}(\gamma^{\rr})}=\pi_{k}^{\ee}(\alpha_n) \prod_{i=0}^{n-1}P_{k,\ee}(\alpha_{n-i}^{\rr},\alpha_{n-(i+1)}^{\rr}), \end{split}
 \end{equation*}
where each $m  $ in the third (respectively, fourth) term is taken with its multiplicity in the multi-set $A(v,k,\gamma)$ (respectively, $A(v,k,\gamma^{\rr}) $).

 The second line of \eqref{e:pike2} follows from the first by summing over all  $ \alpha_0,\alpha_1,\ldots,\alpha_n \in \Pke  $  such that $\alpha_0=\alpha$ and $\alpha_n=\beta$. Note that here we are using the assumption that the walk cannot get stuck. Finally, $\sum_{\alpha \in \Pke } \pi_{k}^{\ee}(\alpha)P_{k,\ee}(\alpha,\beta)=\pi_{k}^{\ee}(\beta)\sum_{\alpha \in \Pke } P_{k,\ee}(\beta^{\rr},\alpha^{\rr})=\pi_{k}^{\ee}(\beta)$ and so  $\pi_{k}^{\ee}$ is indeed stationary for $P_{k,\ee}$. 
\end{proof}

\section{Proof of Theorem \ref{thm:main}}
\label{s:proofthm1}
We shall need the following simple lemma whose proof is deferred to \S\ref{s:simplelemma}.
\begin{lemma}
\label{lem:simplellemma}
Let $P $ be the transition kernel of an irreducible Markov chain. Let $\hat P:= \sum_{i=0}^{m}p_i P^{i} $ where $\sum_{i=0}^m p_i=1$ and $m \in \N$. If the greatest common denominator (gcd) of $\{i \in [m]:p_i>0 \} $ is 1 then $P$ is recurrent iff $\hat P$ is recurrent.    
\end{lemma}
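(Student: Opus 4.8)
The plan is to show that $P$ and $\hat P$ have comparable Green's functions at a fixed basepoint $x$, i.e. that $\sum_{n} \hat P^n(x,x)$ and $\sum_n P^n(x,x)$ are simultaneously finite or infinite; recurrence then follows since an irreducible chain is recurrent iff its Green's function diverges at one (equivalently every) state. One direction is immediate: since $\hat P = \sum_{i=0}^m p_i P^i$ is a convex combination of powers of $P$, every excursion of the $\hat P$-chain away from $x$ can be realized inside an excursion of the $P$-chain, so if $P$ is recurrent then so is $\hat P$ (one can also see this from Theorem \ref{thm:AG}-type monotonicity, or simply: $\hat P$-returns to $x$ are a sub-collection of $P$-returns in the natural coupling, hence infinitely many a.s.). The substantive direction is the converse: assuming $\hat P$ is recurrent, deduce $P$ is recurrent.

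For the converse I would argue at the level of generating functions. Fix $x$ and set $G_P(z) = \sum_{n\geq 0} P^n(x,x) z^n$, $G_{\hat P}(z) = \sum_{n\geq 0}\hat P^n(x,x)z^n$ for $z \in [0,1)$. Write $\psi(z) := \sum_{i=0}^m p_i z^i$, so that $\hat P = \psi(P)$ as operators and, spectrally (or just by expanding), $\hat P^n(x,x) = $ the $(x,x)$-entry of $\psi(P)^n$. The clean way to relate the two Green's functions: for $0\le z<1$,
\[
G_{\hat P}(z) = \sum_{n\ge 0} z^n \langle \delta_x, \psi(P)^n \delta_x\rangle,
\]
and one recognizes $\sum_{n} z^n \psi(w)^n = (1 - z\psi(w))^{-1}$, so heuristically $G_{\hat P}(z)$ is obtained from the "resolvent'' $(I - z\psi(P))^{-1}$ at $x,x$. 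Recurrence of $\hat P$ means $G_{\hat P}(z)\to\infty$ as $z\uparrow 1$. Since $\psi(1)=1$ and $\psi'(1) = \sum_i i p_i =: \mu \in (0,\infty)$, near $z=1$ and near the relevant part of the spectrum we have $1 - z\psi(w) \approx (1-z) + (1-w)\,[\text{stuff}]$, which should let me conclude that $G_{\hat P}(z)$ blowing up forces $(I - zP)^{-1}$ at $(x,x)$, i.e. $G_P(z)$, to blow up as well. The gcd hypothesis enters here: it guarantees that the $\hat P$-chain is itself irreducible and aperiodic-compatible with $P$ in the sense that the accessibility structure of $\hat P$ is the same as that of $P$ (if $\gcd\{i : p_i>0\} = d>1$ one could have $\hat P$ supported on a sublattice and the two chains genuinely differ, cf. the walk on $\Z$ with only even steps), so that "recurrent'' is an honest statement about the same communicating class.

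A cleaner, more probabilistic route that avoids analytic estimates on resolvents: use a random time-change / subordination coupling directly. Run the $P$-chain $(Y_n)$, and independently draw i.i.d. increments $T_1, T_2, \dots$ with $\Pr(T_j = i) = p_i$; then $Z_m := Y_{T_1 + \cdots + T_m}$ is a version of the $\hat P$-chain (the increment $0$ with probability $p_0$ just makes the time-change occasionally lazy — it doesn't affect recurrence). Now $Z$ returns to $x$ infinitely often iff $Y$ does, PROVIDED the random set $\{T_1 + \cdots + T_m : m\ge 0\}$ is "dense enough'' to catch the visits of $Y$ to $x$. This is exactly where $\gcd\{i:p_i>0\}=1$ is used: by the standard renewal/number-theoretic fact, $\{T_1+\cdots+T_m\}$ eventually contains every sufficiently large integer with positive probability in each bounded window — more precisely, for the comparison it suffices that whenever $Y_n = x$, there is a uniformly positive chance that $n \in \{T_1+\cdots+T_m\}$ for some $m$, independently enough across returns. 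Hence the expected number of $Z$-visits to $x$ and the expected number of $Y$-visits to $x$ differ by at most a constant factor, so one Green's function is infinite iff the other is, and irreducibility upgrades "infinite expected visits'' to "a.s. infinitely many visits.'' This is precisely the style of argument used in the proof of Theorem \ref{thm:pNBRW} above (parts (ii) and the laziness/backtracking coupling), so it fits the paper's toolkit.

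The main obstacle, in either approach, is making the coupling comparison of return counts rigorous despite the lack of independence between "$Y$ is at $x$ at time $n$'' and "$n$ lies in the renewal set'': the renewal set and the trajectory $(Y_n)$ are independent by construction, so conditioning on $(Y_n)$ one needs a lower bound, uniform over $n$, on $\Pr(n \in \{T_1+\cdots+T_m : m\ge 0\})$. For that I would invoke that under $\gcd = 1$ the renewal sequence $u_n := \Pr(n \in \{\sum T_j\})$ satisfies $u_n \to 1/\mu > 0$ (renewal theorem), hence is bounded below by some $c>0$ for all $n$; then $\E_x[\#\{m : Z_m = x\}] \ge c\,\E_x[\#\{n : Y_n = x\}]$ directly, and the reverse inequality is trivial since every $Z$-visit to $x$ is a $Y$-visit to $x$. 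Then $\hat P$ recurrent $\Rightarrow$ LHS infinite $\Rightarrow$ RHS infinite $\Rightarrow$ $P$ recurrent, and combined with the easy direction this finishes the proof.
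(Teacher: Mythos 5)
Your probabilistic (subordination/renewal) argument is essentially the paper's own proof: the paper introduces exactly the renewal walk $(S_t)_{t\ge 0}$ on $\Z_+$ with increment law $(p_i)_{i=0}^m$, sets $q_\ell:=\Pr(\exists t: S_t=\ell)$, uses $\gcd=1$ to get $q_\ell\ge\delta$ for all $\ell\ge N_0$, and compares Green's functions through $\sum_{i\ge 0}\hat P^i(x,x)=(1-p_0)^{-1}\sum_{\ell\ge 0}q_\ell P^\ell(x,x)$. Two small corrections are needed for rigor: the renewal theorem gives $u_n\ge c$ only for all sufficiently large $n$ (e.g.\ $u_1=0$ if $p_0=p_1=0$), which still suffices since finitely many terms do not affect divergence; and ``every $Z$-visit to $x$ is a $Y$-visit'' overcounts when $p_0>0$ (zero increments make several $m$'s share the same $S_m$), so the honest comparison is $\E_x[\#\{m:Z_m=x\}]\le(1-p_0)^{-1}\E_x[\#\{n:Y_n=x\}]$ -- again harmless -- and relatedly the intro paragraph labels the directions backwards (the sub-collection observation proves $\hat P$ recurrent $\Rightarrow P$ recurrent, which is the easy direction not requiring $\gcd=1$).
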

\subsection{Proof of Parts (i) and (ii) of Theorem \ref{thm:main}}
\label{s:ekNBRW}
\emph{Proof:} The assertion of Part (i) follows from Corollary \ref{cor:SRWASNBRW} in conjunction with Theorem \ref{thm:AG}. We now prove Part (ii). Let $G=(V,E)$ be a connected  graph of bounded degree. Assume that condition $(2)$ holds. Let $R$ be as in condition $(2)$. Let $d$ be the maximal degree in $G$. We first argue that by condition $(2)$ for\footnote{We have not attempted to pick the smallest possible $M$.} $M := 4R+1 $ and $p:= (d-1)^{-M}/[2(M+1) ] $ for every $a \in \dE$ we have that
\begin{equation}
\label{e:Daarr}
\sum_{i=1}^{M}B^i(a,a^{\rr}) \ge 2p(M+1). \end{equation}
Indeed, by  condition $(2)$ for all $a\in \dE$ there is some $i=i(a) \le M$ such that for some $a_0=a,a_1,\ldots,a_i=a^{\rr} $ we have that $B(a_{j-1},a_{j})>0 $ for all $j \in [i] $. Then $\prod_{j \in [i]} B(a_{j-1},a_{j}) \ge (d-1)^{-M}= 2p(M+1)$.
 
Let $D:=\frac{1}{M+1}\sum_{i=0}^{M}B^i$ and $D_{\mathrm{L}}:=\frac{1}{2}(I+D)$. Let $\mathbf{X}:=(X_n)_{n=0 }^{\infty} $ be the Markov chain corresponding to $D_{\mathrm{L}}$. By Lemma \ref{lem:simplellemma} we have that the NBRW is transient iff $\mathbf{X}$ is transient.       

  By \eqref{e:Daarr} we may generate $\mathbf{X}$ as follows: given $X_n=a$ the chain first picks a candidate $Z_{n+1}$ for $X_{n+1}$ according to $D$ (i.e., $Z_{n+1}=b$ with probability $D(a,b)$) and then flips a fair coin in order to decide if the candidate is accepted (i.e., with probability $1/2$ we set $X_{n+1}=Z_{n+1}$ and otherwise we set $X_{n+1}=X_n$). Furthermore, we may generate $\mathbf{X}$ and the $Z_n$'s such that  for some $\xi_0,\xi_1,\ldots$  $\mathrm{i.i.d.}$ Bernoulli($p$) random variables for all $n \ge 0$ we have that $Z_{n+1}=X_n^{\rr}$ whenever $\xi_n=1 $ (and possibly also if  $\xi_n=0 $). Indeed, the probability that $ Z_{n+1}=X_n^{\rr} $, given $X_n $ is at least $p$. Given $X_n$ we may generate $Z_{n+1}$ and $\xi_n $ simultaneously using $U_n \sim U[0,1] $ (where $U[0,1]$ denotes the uniform distribution on $[0,1]$ and $U_1,U_2,\ldots$ are $\mathrm{i.i.d.}$ $U[0,1]$) as follows: $Z_{n+1}$ may taken different values $a_1:=X_n^{\rr} ,a_2,\ldots,a_i$ with probabilities $q_1,\ldots,q_i $ (where the $a_j$'s and the $q_j$'s depend on $X_n$). Set $\xi_n=\Ind{U_n \le p} $. Set $Z_{n+1}=a_i $ if $U_n \in [\sum_{j=1}^{i-1}q_j,\sum_{j=1}^{i}q_j )$ (where $\sum_{j=1}^{0}q_j$ is defined to be 0). Then since regardless of the value of $X_n$ we have that $q_1\ge p$ we get that $Z_{n+1}=a_1$ whenever $\xi_n=1 $.   

Fix some $a=(x_{0},y_{0}) \in \dE$. Consider the case that the initial distribution of $\mathbf{X}$ is the uniform distribution on $\{a,a^{\rr} \}$. Let $e_0=\{x_0,y_0\}$ and $\tau_0=-1$. We define inductively $\tau_{n+1}:=\inf\{t>\tau_n: \xi_{t-1}=1 \} $ and $e_{n}:=\{x_n,y_n\}$, where $x_n$ and $y_n$ are the end-points of $Z_{\tau_n+1} $. Consider the process $\mathbf{Q}:=(e_n)_{n=0 }^{\infty} $ on $E$ given by $Q_n=e_n$ for all $n \ge 0$. To conclude the proof it suffices to show that
\begin{itemize}
\item[(i)] The process $\mathbf{Q}$ is a Markov chain. Its transition kernel $W$ is symmetric. Hence $\mathbf{Q} $ is  reversible w.r.t.~the counting measure $\pi_{E}$ on $E$.
\item[(ii)] The NBRW is recurrent iff $\mathbf{Q}$ is recurrent.
\item[(iii)] SRW on $G$ is recurrent iff $\mathbf{Q}$ is recurrent.
\end{itemize}
We first prove part (i). Consider the transition kernel $K$ on $\dE $ given by
\[K(a,b)=\begin{cases}(1-p)^{-1}D(a,b) & \text{if } b \neq a^{\rr} \\
(1-p)^{-1}(D(a,a^{\rr})-p) & \text{if } b = a^{\rr} \\
\end{cases}. \]
It follows from \eqref{e:revsymmetry} that $K(a,b)=K(b^{\rr},a^{\rr}) $ for all $a,b \in \dE$. Hence  $K_{\mathrm{L}}:=\frac{1}{2}(I+K)$ also satisfies this identity. Finally, the symmetry of $W$ follows from the last identity since   for every $a,b \in E$ we have 
\[W(a ,b):=\frac{1}{2} \sum_{i=0}^{\infty}\sum_{w \in D_{a},w' \in D_b } K_{\mathrm{L}}^i(w, w')(1-p)^{i}p,  \]
where  $D_{\{x,y \}}:=\{(x,y),(y,x) \} $. The proofs of (ii)-(iii) above are almost identical to those of (ii)-(iii) from the proof of Theorem \ref{thm:pNBRW} (and also to that of (iii) in the proof of Part (iii) of Theorem \ref{thm:main} below) and are thus omitted.
 \qed  

\subsection{Proof of Part (iii) of Theorem \ref{thm:main}}

Proof:  Assume that conditions  $\ek$ and $\ekk$ hold. Then there exist some $M \in \N$ and $p \in (0,1) $ such that for every $a \in \Pke$ we have that
\begin{equation}
\label{e:Daarr2}
\sum_{i=1}^{M}P_{k,\ee}^i(a,a^{\rr}) \ge 2p(M+1). \end{equation}
Let $D:=\frac{1}{M+1}\sum_{i=0}^{M}P_{k,\ee}^i $ and $D_{\mathrm{L}}:=\frac{1}{2}(I+D)$. Let $\mathbf{X}:=(X_n)_{n=0 }^{\infty} $ be the Markov chain corresponding to $D_{\mathrm{L}}$. By Lemma \ref{lem:simplellemma} the edge $k$-NBRW is transient iff $\mathbf{X}$ is transient.  By \eqref{e:Daarr2} we may generate $\mathbf{X}$ as follows: given $X_n=a$ the chain first picks a candidate $Z_{n+1}$ for $X_{n+1}$ according to $D$ and then flips a fair coin in order to decide if the candidate is accepted. Furthermore, one may generate $Z_1,Z_{2,}\ldots$ such that for some $\xi_0,\xi_1,\ldots$  $\mathrm{i.i.d.}$ Bernoulli($p$) random variables for all $n \ge 0$ we have that $Z_{n+1}=X_n^{\rr}$ whenever $\xi_n=1 $ (and possibly also if  $\xi_n=0 $). The proof of this claim is identical to the corresponding claim in the proof of Part (ii) of Theorem 1.  

Fix some $a=  \Pke$. Consider the case that the initial distribution of $\mathbf{X}$ is the uniform distribution on $\{a,a^{\rr} \}$. Let $e_0=\{a,a^{\rr} \}$ and $\tau_0=-1$. We define inductively $\tau_{n+1}:=\inf\{t>\tau_n: \xi_{t-1}=1 \} $ and $e_{n}:=\{Z_{\tau_n+1},Z_{\tau_n+1}^{\rr}\}$.  Consider the process  $\mathbf{Q}:=(Q_n)_{n=0 }^{\infty} $ on $U_{k}^{\ee}:=\{\{a,a^{\rr} \}:a \in \Pke  \}$ given by $Q_n:=e_n$ for all $n \ge 0$.  To conclude the proof it suffices to show that
\begin{itemize}
\item[(i)] The process  $\mathbf{Q} $ is a Markov chain. Its transition kernel $W$ satisfies    that for all $a,b \in \Pke $  \[\pi_{k}^{\ee}(a) W(\{a,a^{\rr} \},\{b,b^{\rr} \})=\pi_{k}^{\ee}(b)W(\{b,b^{\rr} \},\{a,a^{\rr} \}), \]  where $\pi_{k}^{\ee}$ is as in \eqref{e:pike}. Hence $\mathbf{Q} $ is  reversible w.r.t.~the  measure $\pi(\{a,a^{\rr} \}):=\pi_{k}^{\ee}(a)$.
\item[(ii)] The edge $k$-NBRW is recurrent iff $\mathbf{Q}$ is recurrent.
\item[(iii)] SRW on $G$ is recurrent iff $\mathbf{Q}$ is recurrent.
\end{itemize}
We first prove part (i). Consider the transition kernel $K$ on $\Pke $ given by
\[K(a,b)=\begin{cases}(1-p)^{-1}D(a,b) & \text{if } b \neq a^{\rr} \\
(1-p)^{-1}(D(a,a^{\rr})-p) & \text{if } b = a^{\rr} \\
\end{cases}. \]
It follows from \eqref{e:pike2} that $K $ is satisfies $\pi_{k}^{\ee}(a)K(a,b)=\pi_{k}^{\ee}(b)K(b^{\rr},a^{\rr}) $ for all $a,b \in \Pke$. Hence also $K_{\mathrm{L}}:=\frac{1}{2}(I+K)$ satisfies  $\pi_{k}^{\ee}(a)K_{\mathrm{L}}(a,b)=\pi_{k}^{\ee}(b)K_{\mathrm{L}}(b^{\rr},a^{\rr}) $ for all $a,b \in \Pke$. The assertion of (i) now follows from the fact that for every $a,b \in \Pke$ we have that 
\[W(\{a,a^{\rr} \} ,\{b,b^{\rr} \}):=\frac{1}{2} \sum_{i=0}^{\infty}\sum_{w\in\{a,a^{\rr} \},w' \in \{b,b^{\rr} \}  } K_{\mathrm{L}}^i(w, w')(1-p)^{i}p.  \]
The proof of (ii) is analogous of that of claim (ii) from the proof of Theorem \ref{thm:pNBRW}, and thus omitted.

We now prove  (iii). Recall that   $H_k^{\ee}:=(U_{k}^{\ee},F_k^{\ee})$  is the graph obtained from $G_k^{\ee} $ (from Definition \ref{d:Gk}) by identifying each $a \in \Pke$ with its reversal. More precisely, (as above) $U_{k}^{\ee}:=\{\{a,a^{\rr} \}:a \in \Pke  \}$ and $\{\{a,a^{\rr} \},\{b,b^{\rr} \} \} \in F_k^{\ee} $ iff $\sum_{w \in \{a,a^{\rr} \},w' \in \{b,b^{\rr} \}  }P_{\ee,k}(w,w')>0 $. Let $U$ be the transition kernel  of SRW on  $\tilde H:=(U_{k}^{\ee},\tilde F)$, where \[\tilde F:= \left\{ \left\{\{a,a^{\rr} \},\{b,b^{\rr} \right\} \}:a,b \in \PP_{k}^{\ee} \text{ such that }\sum_{w \in \{a,a^{\rr} \},w' \in \{b,b^{\rr} \}  }D_{\mathrm{L}}(w,w')  >0 \right\}.\]
 By Lemma \ref{lem:equivek} the SRW on $G$ is recurrent iff the SRW on $ H_k^{\ee}$ is recurrent. Since  $ H_k^{\ee}$ and $\tilde H $ are roughly-isometric, to conclude the proof of Claim (iii) we now show that $\mathbf{Q}$ is recurrent iff the SRW on $\tilde H$ is recurrent. By Lemmas \ref{lem:comparison1} and  \ref{lem:comparison2}  it suffices to show that there exist a $U$-$W$ flow and a $W$-$U$ flow of finite congestion. Similarly to the proof of Claim (iii) in the proof of Theorem \ref{thm:pNBRW}, the  $W$-$U$ flow obtained by mapping every $(e,e')$ such that  $\{e,e' \} \in \tilde F $  to itself (with weight 1) is of finite congestion.

We now construct a $U$-$W$ flow $\Phi$ of finite congestion. Loosely speaking, the flow $\Phi$ assigns to every path the probability that the chain $\mathbf{X}$ follows that path between two consecutive steps of $\mathbf{Q} $. This is not a precise description because $\mathbf{X}$ is defined on $\Pke$ rather than on $U_{k}^{\ee}$. Before defining  $\Phi$   we first introduce some notation. Let $\tilde G =(\Pke,\tilde E)$ be the graph supporting the transitions of $D_{\mathrm{L}}$ (i.e.~$\{a,b\} \in \tilde E $ iff $D_{\mathrm{L}}(a,b)>0$). Denote the collection of all directed paths of length $\ell$ in $\tilde G$ by $\PP_{\ell}(\tilde G)$. For every path $(a_0,\ldots,a_{\ell}) \in \PP_{\ell}(\tilde G) $ let \[\sigma(a_0,\ldots,a_{\ell}):=\prod_{i \in [\ell] }K_{\mathrm{L}}^i(a_{i-1},a_i). \]

For every $e_{0},\ldots,e_{\ell}\in U_{k}^{\ee} $ let \[J(e_{0},\ldots,e_{\ell}):=\{(a_0,\ldots,a_{\ell}) \in \PP_{\ell}(\tilde G):\{a_i,a_{i}^{\rr}\} = e_{i} \text{ for all }0 \le i \le \ell \}, \]
\[\rho(e_{0},\ldots,e_{\ell})=\frac{\pi(e_0)}{2} \sum_{(a_0,\ldots,a_{\ell}) \in J(e_{0},\ldots,e_{\ell})}\sigma(a_0,\ldots,a_{\ell })(1-p)^{\ell }p. \]
As oppose to the situation in the proof of Theorem \ref{thm:pNBRW}, if $J(e_{0},\ldots,e_{\ell}) $ is not empty, then it may contain more than one path even if it is not the case that $e_0=\cdots = e_{\ell}$. If $J(e_{0},\ldots,e_{\ell})=\eset $ then we set $\rho(e_{0},\ldots,e_{\ell})=0 $.

Let $e,e' \in  U_{k}^{\ee} $ be such that $W(e,e')>0$. For every $e_0=e,e_1,\ldots,e_{\ell}=e' \in  U_{k}^{\ee} $ let  $\Phi_{e,e'}((e_0,\ldots,e_{\ell})):=\rho(e_0,\ldots,e_{\ell}) $. It is not hard to verify that by construction we have that $\Phi:=(\Phi_{e,e'})_{e,e' :W(e,e')>0 }$ is a $W$-$U$ flow. It remains only to bound its congestion.

Observe that for all every $e_{0},\ldots,e_{\ell} \in  U_{k}^{\ee}  $ and $0 \le i \le \ell$ we have that\footnote{As opposed to the situation in the proof of Theorem \ref{thm:pNBRW}, the first inequality is no longer an equality. E.g., the left most term can be 0 while the middle term is positive. Also, it is possible that middle term may contain contributions coming from opposite orientations $e_i$.}  
\begin{equation*}
 \rho(e_{0},\ldots,e_{\ell} ) \le \frac{2 }{p \pi(e_i)}\rho(e_{0},\ldots,e_{i} ) \rho(e_{i},\ldots,e_{\ell}) \le C_0  \rho(e_{i},\ldots,e_{1},e_{0}) \rho(e_{i},\ldots,e_{\ell}).
\end{equation*}
The remainder of the proof is analogous to the remainder of the proof  of Theorem \ref{thm:pNBRW} after \eqref{e:2p1p}. \qed

\subsection{Proof of Part (iv) of Theorem \ref{thm:main}}
\label{s:auxvNBRW}
\begin{proof} Let  $G=(V,E)$ be a vertex-transitive  graph satisfying conditions  $\vk$ and $\vkk$. Assume that the SRW on $G$ is transient. Assume towards a contradiction that the vertex $k$-NBRW is recurrent. Since $G$ is transient, by Part (3) of Lemma \ref{lem:equivek} the SRW on $G_{k}^{\vv}$ is also transient. We shall show that $\sup_{\alpha,\beta \in \Pkv }\frac{\pi(\alpha)}{\pi(\beta)}< \infty $. Using Theorem \ref{thm:AG} and Fact \ref{f:RIinvariance} we get that the vertex $k$-NBRW is transient, in contradiction to our assumption.

By conditions  $\vk$ and $\vkk$ the vertex $k$-NBRW is irreducible. By recurrence, its stationary measure $\pi$ is unique up to a multiplication by a constant factor. Moreover, by irreducibility and recurrence, for all $\alpha,\beta \in \Pkv $ we have that \cite{Derman} \[\sfrac{\pi(\alpha)}{\pi(\beta)}=\lim_{t \to \infty} \sfrac{\sum_{i=0}^t P_{k,\vv}^{i}(\alpha,\alpha)}{\sum_{i=0}^t P_{k,\vv}^{i}(\beta,\beta)} .\] 

Every automorphism $\phi$ of $G$ can be extended into a probability preserving bijection of $\Pkv $,  by defining  $ \phi(\alpha):=(\phi (\alpha_0),\ldots,\phi(\alpha_k) )$ for $\alpha=(\alpha_0,\ldots,\alpha_k) \in  \Pkv $. It is `probability preserving' in the sense that $P_{k,\vv}^{i}(\alpha,\beta)=P_{k,\vv}^{i}(\phi(\alpha),\phi(\beta)) $ for all $\alpha,\beta \in \Pkv$ and $i \in \N$.  In particular, 
\begin{equation}
\label{e:piapiphia}
\sfrac{\pi(\alpha)}{\pi(\phi(\alpha))}=\lim_{t \to \infty} \sfrac{\sum_{i=0}^t P_{k,\vv}^{i}(\alpha,\alpha)}{\sum_{i=0}^t P_{k,\vv}^{i}(\beta,\beta)}=1. \end{equation}   
 
  By conditions  $\vk$ and $\vkk$ there exist some $M \in \N$ and $p>0$ such that if $\alpha,\beta \in \Pkv $ and $\alpha_0=\beta_0$ then $\sum_{i=1}^M P_{k,\vv}^i(\alpha,\beta)>p $ and consequently for $c=p/M$ we have that 
\begin{equation}
\label{e:pibetacpialpha}
\pi(\beta) = \frac{1}{M}\sum_{i=1}^M \sum_{\gamma \in \Pkv} \pi(\gamma) P_{k,\vv}^i(\gamma,\beta) \ge \frac{1}{M}\sum_{i=1}^M  \pi(\alpha)P_{k,\vv}^i(\alpha,\beta) \ge c \pi(\alpha) .
\end{equation}
 Finally, by \eqref{e:piapiphia} and the fact that $G$ is vertex-transitive, $\sup_{\alpha,\beta \in \Pkv }\frac{\pi(\alpha)}{\pi(\beta)}=\sup_{\alpha,\beta \in \Pkv,\, \alpha_0=\beta_0 }\frac{\pi(\alpha)}{\pi(\beta)}$, and so by \eqref{e:pibetacpialpha} $\sup_{\alpha,\beta \in \Pkv }\frac{\pi(\alpha)}{\pi(\beta)} \le 1/c $, as desired. 
\end{proof}

\section{Proof of Theorem \ref{thm:Cayley}}
\label{s:proofthm2}

Let $G=H(S)$ be as in Theorem \ref{thm:Cayley}. As $H$ is Abelian, we denote the group's operation by `$+$' and denote the inverse of $h \in H$ by $-h$. For  $h \in H$  and $k \in \N$ we define $kh $ inductively to be $h+(k-1)h$. We denote the identity element by $0$. Recall the notion of an induced chain from \S\ref{s:Markov}.   To exploit the symmetry of $G$ we consider the following auxiliary chain.  
\begin{definition}
\label{d:alphah} Let $G=H(S)$ be the Cayley graph of an infinite finitely generated Abelian group $H \neq \Z $ w.r.t.~a finite symmetric set of generators $S$. Fix some $s \in S$ of infinite order (there must be such $s \in S$).  Let $k \in \N$.
 For every $h \in H $ let \[\alpha_h:=(h,h+s,\ldots,h+ks).  \]
Let $\mathcal{A}:=\{\alpha_h,\alpha_h^{\rr} : h \in H \}$. Let $\pi_{\mathcal{A}} \equiv 1 $ be the counting measure on $\mathcal{A} $. 
Let $\mathbf{Y}^{\ee}:=(Y_t^{\ee})_{t=0}^{\infty}$ (respectively, $\mathbf{Y}^{\vv}:=(Y_t^{\vv})_{t=0}^{\infty}$) be the induced chain on $\mathcal{A}$ w.r.t.~the edge (respectively, vertex) $k$-NBRW. Denote its transition matrix by $P_{\mathcal{A},\ee}$ (respectively, $P_{\mathcal{A},\vv}$).
\end{definition}

\begin{lemma}
\label{lem:Csym}
In the setup of Definition \ref{d:alphah} we have that
 $\pi_{\mathcal{A}}$ is stationary for both $\mathbf{Y}^{\ee} $ and $\mathbf{Y}^{\vv} $. Moreover,   for all $a,b \in \mathcal{A} $ we have that
 \begin{equation}
 \label{e:Cayleysymm}
 P_{\mathcal{A},\ee}(a,b)=P_{\mathcal{A},\ee}(b^{\rr},a^{\rr}) \quad \text{and} \quad P_{\mathcal{A},\vv}(a,b)=P_{\mathcal{A},\vv}(b^{\rr},a^{\rr}).
 \end{equation} 
\end{lemma}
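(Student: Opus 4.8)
The plan is to prove the symmetry relation \eqref{e:Cayleysymm} directly from the corresponding symmetry of the $k$-NBRW trajectories, and then deduce stationarity of $\pi_{\mathcal A}$ as a formal consequence, exactly as in the derivation that \eqref{e:revsymmetry} implies $\dpi$ is stationary for $B$. The key observation is that in the Cayley graph $G = H(S)$, translation by any group element $g$ (the map $x \mapsto x+g$) is an automorphism, and moreover the map $x \mapsto -x + c$ (for a fixed constant $c$) is also a graph automorphism because $S$ is symmetric; both extend to probability-preserving bijections of $\PP_k$ compatible with $P_{k,\ee}$ and $P_{k,\vv}$ in the sense used in the proof of Part (iv) of Theorem \ref{thm:main}.

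First I would record the basic ``reversed-trajectory'' symmetry for the $k$-NBRW on a general graph: for the edge $k$-NBRW this is essentially contained in \eqref{e:pike2}, but I need a version that does not assume the walk cannot get stuck (since in the setup of Theorem \ref{thm:Cayley} we do not assume $\ekk$ or $\vkk$). The right formulation is a pathwise one: for a path $\gamma=(\gamma_0,\dots,\gamma_k,\dots,\gamma_{k+n})$ such that every length-$k$ sub-path is admissible (and analogously for the vertex walk), the probability that the $k$-NBRW started at $(\gamma_0,\dots,\gamma_k)$ follows this path equals, up to the ratio of the ``local weights'' $\pi_k^{\ee}$ at the two endpoints, the probability that the $k$-NBRW started at the reversed path follows the reversed trajectory. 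The subtle point — and this is where being on a Cayley graph is used — is that when both endpoint states $a,b$ lie in $\mathcal A$, all vertices visited along a trajectory of the induced chain between $a$ and $b$ can get stuck or not in a way that is symmetric under reversal, because a trajectory in $G$ from $a$ to $b$ and its reversal visit the same vertices with the same local graph structure (vertex-transitivity), so ``stuck in the forward direction'' at a vertex is equivalent to ``stuck in the backward direction''. I would make this precise by first arguing the graph has a vertex-transitive, reflection-type symmetry swapping $a$ and $a^{\rr}$, namely the automorphism $x \mapsto -x + (2h+ks)$ when $a = \alpha_h$, which fixes the edge/vertex set of $\alpha_h$ setwise while reversing its orientation.

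Next, summing the pathwise identity over all trajectories in $G$ from $a$ to $b$ that do not hit $\mathcal A$ in between (these are exactly the trajectories contributing to $P_{\mathcal A}(a,b)$, by definition of the induced chain) gives $\pi_k^{\ee}(a) \, P_{\mathcal A,\ee}(a,b) = \pi_k^{\ee}(b)\, P_{\mathcal A,\ee}(b^{\rr},a^{\rr})$, and similarly for the vertex walk with the appropriate local weight. The final step is to check that $\pi_k^{\ee}$ — and its vertex analog — is \emph{constant} on $\mathcal A$: this is immediate because every $a \in \mathcal A$ is of the form $\alpha_h$ or $\alpha_h^{\rr}$ with $\alpha_h = (h, h+s, \dots, h+ks)$, so the consecutive vertices and their degrees, and the multiplicities $m_i^{(k)}$, are the same for all $h$ (translation invariance) and, by Lemma \ref{lem:mik}, also for the reversals. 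Hence the weight factors cancel and \eqref{e:Cayleysymm} follows in the clean form stated. Stationarity of $\pi_{\mathcal A} \equiv 1$ is then the one-line computation $\sum_a P_{\mathcal A,\ee}(a,b) = \sum_a P_{\mathcal A,\ee}(b^{\rr},a^{\rr}) = \sum_a P_{\mathcal A,\ee}(b^{\rr},a) = 1$, and likewise for the vertex walk.

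The main obstacle I anticipate is the bookkeeping around stuck states in the pathwise symmetry: one must verify carefully that a step of the $k$-NBRW in $G$ from a path $\delta$ to $\delta(v)$ has the same probability as the reversed step, accounting for the ``if stuck, pick a uniform neighbor'' rule, and that the set of positions where one can get stuck is invariant under reversing the whole trajectory. Vertex-transitivity of $G$ (so all degrees equal $|S|$) together with the reflection automorphism above should make this go through, but it requires being explicit that the induced chain $P_{\mathcal A}$ only involves trajectories whose intermediate vertices all lie off $\mathcal A$, so that the endpoint-weight cancellation is the only surviving asymmetry — and that this residual weight is trivial because $\pi_k^{\ee}$ is constant on $\mathcal A$.
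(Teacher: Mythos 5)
Your approach is genuinely different from the paper's, and it has real gaps. The paper's proof of \eqref{e:Cayleysymm} is a pure automorphism argument: it composes translations with the negation automorphism $x\mapsto -x$ of $G$ (available because $S=-S$) to obtain, for each pair $a,b\in H$, a single graph automorphism $\psi(x)=-x+(a+b+ks)$ that sends $\alpha_a\mapsto\alpha_b^{\rr}$, $\alpha_b\mapsto\alpha_a^{\rr}$, and preserves $\mathcal{A}$ setwise. Since any graph automorphism induces a probability-preserving bijection of the $k$-NBRW's trajectories (including the stuck-step rule) and maps trajectories avoiding $\mathcal{A}$ to trajectories avoiding $\mathcal{A}$, this gives $P_{\mathcal{A},\ee}(\alpha_a,\alpha_b)=P_{\mathcal{A},\ee}(\alpha_b^{\rr},\alpha_a^{\rr})$ and the identical statement for $P_{\mathcal{A},\vv}$ at once, with no trajectory reversal and no weight bookkeeping. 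Stationarity of $\pi_{\mathcal{A}}$ is then the one-line computation you give.

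Your proposal instead routes through a weighted reversed-trajectory symmetry in the spirit of Corollary~\ref{cor:pike}, and this is where it breaks. First, the paper proves that symmetry only under the hypothesis $\ekk$ (the walk cannot get stuck); in the setting of Theorem~\ref{thm:Cayley} this is precisely \emph{not} assumed. Your proposed fix — that vertex-transitivity plus the reflection $x\mapsto -x+(2h+ks)$ makes ``stuck'' symmetric under trajectory reversal — does not establish that claim: ``stuck at $\alpha_i$ forward'' concerns the edges at $\gamma_{i+k}$ lying in $\alpha_i^{\ee}$, while ``stuck at $\alpha_{i+1}^{\rr}$ backward'' concerns the edges at $\gamma_{i+1}$ lying in $\alpha_{i+1}^{\ee}$; equal degrees do not make these equivalent, and once a stuck step occurs the intermediate state may fall outside $\Pke$ so that Lemma~\ref{lem:mik}'s combinatorial identity no longer applies. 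Moreover the reflection you name swaps $\alpha_h\leftrightarrow\alpha_h^{\rr}$ for that particular $h$ only; it does not act as $\gamma\mapsto\gamma^{\rr}$ on trajectories, nor does it send $\alpha_g$ to $\alpha_g^{\rr}$ for $g\neq h$, so it cannot deliver the pathwise identity you want. Second, the phrase ``and similarly for the vertex walk with the appropriate local weight'' is unfounded: the paper states explicitly (Remark in \S\ref{s:overview}) that the vertex $k$-NBRW has no analogue of \eqref{e:pike2}. You already have all the pieces of the correct argument — you identify translation and $x\mapsto -x+c$ as automorphisms — you just need to apply the right composite directly to the induced chain rather than detouring through a trajectory-reversal symmetry that is not available here.
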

We defer the proof to \S\ref{s:simplelem2}. The main reason for which the assertion of the lemma holds is that Cayley graphs of finitely generated Abelian groups satisfy that for every pair of vertices $x,y$ there is an automorphism that maps $x$ to $y$ and $y$ to $x$. Namely, first translate by $-x$, then map every $a$ to $-a$ and finally translate by $+y$.   A more precise explanation is that Lemma \ref{lem:Csym}   is a consequence of  symmetry under negation, along with translation invariance and the fact that $\mathcal{A}$ is closed under negation, i.e., $-\mathcal{A}:=\{-\gamma :\gamma\in \mathcal{A} \}=\mathcal{A} $, where for $\gamma=(\gamma_0,\ldots,\gamma_k) $ we define $-\gamma:= (-\gamma_0,\ldots,-\gamma_k)$. By `symmetry under negation' (respectively, `translation invariance') we mean that for the $k$-NBRW the transition probability from $\gamma$ to  $\tilde \gamma$ equals its  transition probability from $-\gamma$ to  $-\tilde \gamma$ (respectively, from $\gamma+h:= (\gamma_0+h,\ldots,\gamma_k+h) $ to $\tilde \gamma +h$, for all $h \in H$).   

\emph{Proof of Theorem \ref{thm:Cayley}:} We only prove the equivalence between the SRW and the vertex $k$-NBRW as the analysis of the edge $k$-NBRW is analogous. Using the notation from Definition \ref{d:alphah}   there exists some $p \in (0,1) $ such that for every $a \in \mathcal{A}$ we have that
\begin{equation}
\label{e:Daarr4}
P_{\mathcal{A},\vv}(a,a^{\rr}) = p 
\end{equation}
(the constant $p$ is independent of $a$ by the aforementioned translation invariance and symmetry under negation)
Let $\mathbf{X}:=(X_n)_{n=0}^{\infty} $ be the Markov chain corresponding to the transition kernel $P':=\frac{1}{2}(I+ P_{\mathcal{A},\vv}) $.  Since $\mathcal{A}$ is a.s.\ visited by the vertex $k$-NBRW we have that the vertex $k$-NBRW is transient iff $P_{\mathcal{A},\vv} $ is transient, which occurs iff $\mathbf{X}$ is transient. 

By \eqref{e:Daarr4} we may generate $\mathbf{X}$ as follows: given $X_n=a$ the chain first picks a candidate $Z_{n+1}$ for $X_{n+1}$ according to $P'$ and then flips a fair coin in order to decide if the candidate is accepted. Furthermore, one may generate $Z_1,Z_{2,}\ldots$ such that for some $\xi_0,\xi_1,\ldots$  $\mathrm{i.i.d.}$ Bernoulli($p$) random variables,  for all $n \ge 0$ we have that $Z_{n+1}=X_n^{\rr}$ iff $\xi_n=1$. 

Fix some $a \in  \mathcal{A}$. Consider the case that the initial distribution of $\mathbf{X}$ is the uniform distribution on $\{a,a^{\rr} \}$. Let $e_0=\{a,a^{\rr} \}$ and $\tau_0=-1$. We define inductively $\tau_{n+1}:=\inf\{t>\tau_n: \xi_{t-1}=1 \} $ and $e_{n}:=\{Z_{\tau_n+1},Z_{\tau_n+1}^{\rr}\}$.  Consider the process  $\mathbf{Q}:=(e_n)_{n=0 }^{\infty} $ on $\mathcal{B} :=\{\{a,a^{\rr} \}:a \in \mathcal{A}\}$.  To conclude the proof it suffices to show that
\begin{itemize}
\item[(i)] The process  $\mathbf{Q} $ is a Markov chain. Its transition kernel $W$ is symmetric. Hence $\mathbf{Q} $ is  reversible w.r.t.\ the counting measure on $\mathcal{B}$.
\item[(ii)] The vertex $k$-NBRW is recurrent iff $\mathbf{Q}$ is recurrent.
\item[(iii)] SRW on $G$ is recurrent iff $\mathbf{Q}$ is recurrent.
\end{itemize}
We first prove part (i). Consider the transition kernel $K$ on $ \mathcal{A} $ given by
\[K(a,b):=\begin{cases}(1-p)^{-1}P_{\mathcal{A},\vv}(a,b) & \text{if } b \neq a^{\rr} \\
(1-p)^{-1}(P_{\mathcal{A},\vv}(a,a^{\rr})-p) & \text{if } b = a^{\rr} \\
\end{cases}. \]
It follows from \eqref{e:Cayleysymm} that $K(a,b)=K(b^{\rr},a^{\rr})$ for all $a,b \in \mathcal{A}$. Hence $K_{\mathrm{L}}:=\frac{1}{2}(I+K)$ satisfies this identity as well. The assertion of (i) now follows from the fact that for every $a,b \in\mathcal{A}$ we have that 
\[W(\{a,a^{\rr} \} ,\{b,b^{\rr} \}):=\frac{1}{2} \sum_{i=0}^{\infty}\sum_{w\in\{a,a^{\rr} \},w' \in \{b,b^{\rr} \} } K_{\mathrm{L}}^i(w, w')(1-p)^{i}p.  \]
The proofs of (ii) is analogous to that of claim (ii) from the proof of Theorem \ref{thm:pNBRW} and is thus omitted. We now prove (iii). Let $U$ be the transition matrix of SRW on $G$. Observe that we may identify  $\mathcal{B}$ with $H$ via the map $\{\alpha_{h},\alpha_{h}^{\rr}\} \to h $. Hence it suffices to construct a $U$-$W$ flow and a $W$-$U$ flow of finite congestion. The details are similar to those from the proof of Part (iii) of Theorem \ref{thm:main} and are thus omitted.   \qed 

\section*{Acknowledgements}
The author is grateful to Itai Benjamini, Tom Hutchcroft, Yuval Peres and Ariel Yadin  for useful discussions. The author would like to thank Yuval Peres for pointing out the fact that Proposition \ref{prop:reg} is a simple consequence of the well-known coupling used in its proof.

\bibliographystyle{plain}
\bibliography{NBW}

\begin{thebibliography}{10}

\bibitem{aldous}
David Aldous and Jim Fill.
\newblock {\em Reversible {M}arkov chains and random walks on graphs}.
\newblock Unfinished manuscript.

\bibitem{alon}
Noga Alon, Itai Benjamini, Eyal Lubetzky, and Sasha Sodin.
\newblock Non-backtracking random walks mix faster.
\newblock {\em Commun. Contemp. Math.}, 9(4):585--603, 2007.

\bibitem{benhamu}
Anna Ben-Hamou and Justin Salez.
\newblock Cutoff for nonbacktracking random walks on sparse random graphs.
\newblock {\em Ann. Probab.}, 45(3):1752--1770, 2017.

\bibitem{beres}
Nathana\"{e}l Berestycki, Eyal Lubetzky, Yuval Peres, and Allan Sly.
\newblock Random walks on the random graph.
\newblock {\em Ann. Probab.}, 46(1):456--490, 2018.

\bibitem{bordenave}
Charles Bordenave, Marc Lelarge, and Laurent Massouli\'{e}.
\newblock Non-backtracking spectrum of random graphs: community detection and
  non-regular {R}amanujan graphs.
\newblock In {\em 2015 {IEEE} 56th {A}nnual {S}ymposium on {F}oundations of
  {C}omputer {S}cience---{FOCS} 2015}, pages 1347--1357. IEEE Computer Soc.,
  Los Alamitos, CA, 2015.

\bibitem{Derman}
C.~Derman.
\newblock A solution to a set of fundamental equations in {M}arkov chains.
\newblock {\em Proc. Amer. Math. Soc.}, 5:332--334, 1954.

\bibitem{comparison}
Persi Diaconis and Laurent Saloff-Coste.
\newblock Comparison theorems for reversible {M}arkov chains.
\newblock {\em Ann. Appl. Probab.}, 3(3):696--730, 1993.

\bibitem{doyle}
Peter~G. Doyle and J.~Laurie Snell.
\newblock {\em Random walks and electric networks}, volume~22 of {\em Carus
  Mathematical Monographs}.
\newblock Mathematical Association of America, Washington, DC, 1984.

\bibitem{doyle2}
Peter~G Doyle and Jean Steiner.
\newblock Commuting time geometry of ergodic markov chains.
\newblock {\em arXiv preprint arXiv:1107.2612}, 2011.

\bibitem{comparison2}
Martin Dyer, Leslie~Ann Goldberg, Mark Jerrum, and Russell Martin.
\newblock Markov chain comparison.
\newblock {\em Probab. Surv.}, 3:89--111, 2006.

\bibitem{CLT}
Robert Fitzner and Remco van~der Hofstad.
\newblock Non-backtracking random walk.
\newblock {\em J. Stat. Phys.}, 150(2):264--284, 2013.

\bibitem{nonrev}
A.~Gaudilli\`ere and C.~Landim.
\newblock A {D}irichlet principle for non reversible {M}arkov chains and some
  recurrence theorems.
\newblock {\em Probab. Theory Related Fields}, 158(1-2):55--89, 2014.

\bibitem{kempton}
Mark Kempton.
\newblock A non-backtracking {P}\'{o}lya's theorem.
\newblock {\em J. Comb.}, 9(2):327--343, 2018.

\bibitem{lee}
Chul-Ho Lee, Xin Xu, and Do~Young Eun.
\newblock Beyond random walk and metropolis-hastings samplers: why you should
  not backtrack for unbiased graph sampling.
\newblock In {\em ACM SIGMETRICS Performance evaluation review}, volume~40,
  pages 319--330. ACM, 2012.

\bibitem{levin}
David~A. Levin and Yuval Peres.
\newblock {\em Markov chains and mixing times}.
\newblock American Mathematical Society, Providence, RI, 2017.
\newblock Second edition of [MR2466937], With contributions by Elizabeth L.
  Wilmer and a chapter on ``Coupling from the past'' by James G. Propp and
  David B. Wilson.

\bibitem{ram}
Eyal Lubetzky and Yuval Peres.
\newblock Cutoff on all {R}amanujan graphs.
\newblock {\em Geom. Funct. Anal.}, 26(4):1190--1216, 2016.

\bibitem{GW}
Russell Lyons, Robin Pemantle, and Yuval Peres.
\newblock Ergodic theory on {G}alton-{W}atson trees: speed of random walk and
  dimension of harmonic measure.
\newblock {\em Ergodic Theory Dynam. Systems}, 15(3):593--619, 1995.

\bibitem{lyons}
Russell Lyons and Yuval Peres.
\newblock {\em Probability on trees and networks}, volume~42 of {\em Cambridge
  Series in Statistical and Probabilistic Mathematics}.
\newblock Cambridge University Press, New York, 2016.

\bibitem{cogrowth2}
S.~Northshield.
\newblock Cogrowth of regular graphs.
\newblock {\em Proc. Amer. Math. Soc.}, 116(1):203--205, 1992.

\bibitem{ortner}
Ronald Ortner and Wolfgang Woess.
\newblock Non-backtracking random walks and cogrowth of graphs.
\newblock {\em Canad. J. Math.}, 59(4):828--844, 2007.

\end{thebibliography}

\vspace{2mm}

\appendix

\section{Technical Proofs}

\subsection{Proof of Proposition \ref{prop:reg}}
\label{s:coupling}
\begin{proposition*}
Let $G=(V,E)$ be a $d$-regular connected graph for some $d \ge 3$. Then the SRW on $G$ is recurrent iff the NBRW on $G$ is recurrent.
\end{proposition*}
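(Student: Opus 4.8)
I would run the standard universal-cover coupling. Let $T$ be the universal cover of $G$; since $G$ is $d$-regular and the covering map $\pi\colon T\to G$ is a local isomorphism, $T$ is the $d$-regular tree $T_d$, and here $d\ge 3$ enters decisively: $T_d$ is transient, indeed SRW on $T_d$ has positive speed $v=(d-2)/d$. Fix $\hat o\in\pi^{-1}(o)$ and run SRW $\hat S=(\hat S_n)$ on $T$ from $\hat o$; then $S_n:=\pi(\hat S_n)$ is SRW on $G$ from $o$. The heart of the construction is to produce, on the same probability space, a non-backtracking walk $\hat X=(\hat X_k)$ on $T$ and times $0=\tau_0<\tau_1<\cdots$ with $\hat X_k=\hat S_{\tau_k}$ for all $k$ and with $(\tau_{k+1}-\tau_k)_{k\ge1}$ i.i.d.\ satisfying $\Pr(\tau_2-\tau_1>s)\le e^{-cs}$ for some $c=c(d)>0$. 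Because $T$ is a tree, "non-backtracking on $T$" just means $d_T(\hat o,\hat X_k)=k$; one obtains such $\hat X$ from the regeneration structure of the transient walk $\hat S$ on $T_d$ (the successive vertices on the limiting boundary ray of $\hat S$, recorded at their last-visit times, or equivalently the successive cut points of $\hat S$), checking, by homogeneity of $T_d$, that the recorded vertices form a NBRW and that the inter-regeneration block lengths are i.i.d.\ with exponential tails and independent of the recorded path. Projecting, $X_k:=\pi(\hat X_k)=S_{\tau_k}$ is a NBRW on $G$ started from (a directed edge at) $o$, and $L_k:=\tau_{k+1}-\tau_k$ are i.i.d.\ with exponential tails, independent of $(X_k)$.

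One implication is then immediate: if the NBRW on $G$ is recurrent, then $X_k=o$ for infinitely many $k$, hence $S_{\tau_k}=o$ infinitely often, so $S$ returns to $o$ infinitely often and SRW is recurrent; equivalently, SRW transient $\Rightarrow$ NBRW transient. For the converse, assume SRW on $G$ is recurrent and suppose toward a contradiction that the NBRW is transient. Since $G$ has bounded degree, every ball $B_G(o,r)$ is finite, so transience of the NBRW as a Markov chain on $\dE$ forces $d_G(o,X_k)\to\infty$. Between $\tau_k$ and $\tau_{k+1}$ the walk $S$ stays within $G$-distance $L_k$ of $S_{\tau_k}=X_k$, so $S$ can visit $o$ during the $k$-th block only if $L_k\ge d_G(o,X_k)$. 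Using the exponential tail of $L_k$ and its independence of $(X_k)$, one shows $\sum_k\Pr(L_k\ge d_G(o,X_k))<\infty$, so by Borel--Cantelli only finitely many blocks contain a visit of $S$ to $o$; since each block is finite, $S$ visits $o$ only finitely often, contradicting recurrence of SRW.

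\textbf{Main obstacle.} Two points carry the work. First, the construction of the regeneration/cut-point decomposition of $\hat S$ on $T_d$ and the verification that the recorded vertices form a NBRW with i.i.d.\ exponential-tail block lengths: this is classical but is the technical backbone, and it is exactly where $d\ge3$ (transience of $T_d$) is used. Second, and more delicate, is the summability $\sum_k\Pr(L_k\ge d_G(o,X_k))<\infty$: this needs a lower bound on how fast $d_G(o,X_k)$ grows for a transient NBRW on a bounded-degree graph. One extracts such a bound from transience itself --- a transient chain has summable return probabilities, and on a graph of bounded degree a ball of radius $r$ has at most exponential volume, so the walk cannot linger at bounded distance from $o$ at a rate that would make $\Pr(X_k\in B_G(o,r))$ non-summable --- which, played off against the exponential tail of $L_k$, closes the estimate. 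I expect this estimate to be the real content of the converse direction, the first implication and the bare coupling being comparatively soft.
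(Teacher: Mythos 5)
Your starting point (universal cover, coupling $X_k=S_{\tau_k}$) and your easy direction (NBRW recurrent $\Rightarrow$ SRW recurrent) agree with the paper's. But your converse direction is argued very differently, and the argument as sketched has a genuine gap.

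For the converse you want $\sum_k \Pr\bigl(L_k \ge d_G(o,X_k)\bigr) < \infty$. Granting independence of $(L_k)$ and $(X_k)$, this becomes $\sum_r \Pr(L\ge r)\,\E\bigl[\#\{k : d_G(o,X_k)=r\}\bigr]$. Your proposed inputs are the exponential tail $\Pr(L\ge r)\le e^{-cr}$, the volume bound $|B_G(o,r)|\lesssim d^r$, and summability of return probabilities. These do not close the estimate, for two separate reasons. First, the exponential tail rate $c$ of the block length $L$ for SRW on $T_d$ is on the order of $\tfrac12\log\bigl(d^2/(4(d-1))\bigr)=\log d-\tfrac12\log(4(d-1))$, which is \emph{strictly smaller than} $\log d$ for every $d\ge 3$ (for $d=3$, roughly $0.06$ versus $\log 3\approx 1.1$). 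Thus $\sum_r e^{-cr}d^r$ diverges, so exponential tail plus exponential volume does not suffice. Second, to turn ``summable return probabilities'' into $\E[\#\{k:X_k\in B_G(o,r)\}]\lesssim d^r$ you would need $\sup_{e'}G(e_0,e')<\infty$, which via $G(e_0,e')\le G(e',e')$ amounts to a uniform lower bound on escape probabilities for the transient NBRW; this is not established, and does not follow from transience alone. (Assuming additionally that SRW is recurrent does not repair this, since you have not argued that SRW-recurrent $d$-regular graphs have subexponential volume growth, and your sketch does not invoke such a fact.)

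The paper's converse avoids these quantitative issues entirely. It does not try to show the transient NBRW escapes $o$ fast enough; instead it observes that, among the (by recurrence, infinitely many) visits of SRW on the universal cover $T$ to preimages of $o$, each one is the \emph{last} visit to its level with conditional probability bounded below by a constant depending only on $d$ (of order $(d-2)/(d-1)$, coming directly from the tree structure). By the conditional Borel--Cantelli lemma infinitely many of these are last visits, hence lie on the diverging ray, hence project to returns of the NBRW to $o$. This soft argument is the real content of the paper's proof and is what you would need to replace your Borel--Cantelli step with.
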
  
\emph{Proof.} Fix some $o \in V$. Let $T$ be the universal cover of $G$. Namely, $T=(V',E')$ is an infinite $d$-ary tree rooted at $o$, whose $\ell$th level is labelled by the collection of all non-backtracking paths in $G$ of length $\ell$ started from $o$. The children of a vertex $\gamma=(o,v_1,\ldots,v_{\ell})$ in $T$ are precisely the vertices which are labelled by the possible extensions of $\gamma$ to a non-backtracking path of length $\ell+1$ (i.e.~all paths of the form $(o,v_1,\ldots,v_{\ell+1})$ with $v_{\ell+1} \neq v_{\ell-1} $ and $\{v_{\ell},v_{\ell+1}\} \in E $). For every $\gamma=(o,v_1,\ldots,v_{\ell}) \in V' $ let $\psi(\gamma)=v_{\ell}$.
 
Let $\mathbf{Y}:=(Y_i)_{i=0}^{\infty}$ (respectively, $\mathbf{W}:=(W_i)_{i=0}^{\infty}$) be a SRW (respectively, NBRW) on $T$ started from $o$. Observe that  $(\psi(Y_i))_{i=0}^{\infty}$ (respectively, $(\psi( W_i))_{i=0}^{\infty}$) is a SRW (respectively, NBRW) on $G$ started from $o$. Clearly, the SRW (respectively, NBRW) on $G$ is recurrent iff $\mathrm{a.s.}$ $|\{i: \psi(Y_i)=o \}|=\infty$ (respectively, $|\{i: \psi(W_i)=o \}|=\infty$). We may generate $\mathbf{W}$ from $\mathbf{Y}$ simply as the ray along which  $\mathbf{Y}$  diverges (that is, $W_i$ is the last vertex in the $i$th level of $T$ to be visited by  $\mathbf{Y}$). As $\{i: \psi(W_i)=o \} \subseteq \{i: \psi(Y_i)=o \}$, we get that if the NBRW on $G$ is recurrent then so is the SRW. 

\medskip

Conversely, assume that the SRW on $G$ is recurrent. We will show that every time at which the SRW on $T$ reaches some $v \in V' $ with $\psi(v)=o$, with probability (uniformly) bounded from below $v$ belongs to the ray through which the walk diverges.

 Denote the $i$th level of $T$ by $\mathcal{L}_i$. Let $\sigma_0=0=\rho_0$ and define inductively for all $i \ge 0$, \[\tau_{i}:=\sup\{t  : Y_{t} \in \mathcal{L}_{\rho_i}  \}, \quad  \sigma_{i+1}:=\inf\{t> \tau_i:\psi(Y_t)=o\} \text{ and }\rho_{i+1}:=\mathrm{dist}_T(Y_{\sigma_{i+1}},o). \]  It is not hard to see that $\mathrm{a.s.}$ $\sigma_i$ is finite for all $i$ (by the condition that the SRW is recurrent, and the fact that every level of $T$ is $\mathrm{a.s.}$ visited only finitely many times) and that for all $i$, conditioned on $((\tau_j,\sigma_{j+1},\rho_{j+1}))_{j=0}^{i-1}$, the probability that $\tau_{i}= \sigma_{i} $ is at least $\frac{d-2}{d-1} $. Thus $\mathrm{a.s.}$  there are infinitely many $i$'s such that $\tau_i=\sigma_i$. Note that for such $i$'s we have that $\psi(Y_{\sigma_i})=o$ and $Y_{\sigma_i} \in \{ \psi(W_n):n \in \Z_+ \}$.  \qed

\subsection{Existence of a sink-free and source-free orientation - Proof of Lemma \ref{lem:sinkfree}}
\label{s:sinkfree}
\begin{lemma*}
Let $G=(V,E)$ be an infinite locally finite connected  graph of minimal degree at least $2$. There exists a \emph{sink-free and source-free orientation} of $E$.
\end{lemma*}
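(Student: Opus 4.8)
The plan is to build the orientation greedily, reducing the infinite problem to finite pieces by exhausting $G$ with an increasing sequence of finite connected subgraphs and taking a limit via König's lemma (or a diagonal/compactness argument). First I would fix an exhaustion $V_1 \subseteq V_2 \subseteq \cdots$ of $V$ by finite sets with $\union_n V_n = V$, each $G[V_n]$ connected, and each $v \in V_{n}$ having all its $G$-neighbours in $V_{n+1}$. The key local fact I would isolate is: in a finite connected graph $H$ of minimal degree at least $2$, there is an orientation of $E(H)$ in which every vertex has out-degree at least $1$ and in-degree at least $1$. To see this, note every such $H$ contains a cycle; orient that cycle cyclically, then repeatedly attach the remaining edges. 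More cleanly: since $\deg_H(v) \ge 2$ for all $v$, $H$ has an even number of odd-degree vertices and one can add a perfect matching on them (using connectedness to route it, or just add a multigraph matching) to obtain an Eulerian multigraph $H'$; an Euler circuit of $H'$ orients all edges so that in-degree equals out-degree at every vertex, and each of these is $\ge 1$ because $\deg_{H'}(v) \ge \deg_H(v) \ge 2$. Restricting this orientation to the edges of $H$ keeps in-degree and out-degree $\ge 1$ at every vertex of $H$ that is not incident to an added matching edge — so I would instead take $H$ to be $G[V_n]$ together with, for each $v$ with a neighbour outside $V_n$, one extra "boundary stub", making every original vertex have a guaranteed $G$-neighbour inside; the robust way is simply: orient $G[V_{n+1}]$ so that every vertex of $V_n$ (which has all neighbours in $V_{n+1}$, hence degree $\ge 2$ inside $G[V_{n+1}]$) has in- and out-degree $\ge 1$ within $G[V_{n+1}]$.

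Next I would pass to the limit. For each $n$ let $\cO_n$ be such an orientation of $E(G[V_{n+1}])$ with every vertex of $V_n$ having in- and out-degree $\ge 1$. The orientations $\cO_n$ need not be consistent, so I would use compactness: the set of all orientations of $E$ is $\{(u,v),(v,u)\}^{E}$, a compact product space; along a subsequence the $\cO_n$ converge pointwise (on each edge, a value that is eventually constant appears infinitely often — formally extract a nested subsequence stabilising $\cO_n$ on $e_1$, then on $e_2$, etc., and diagonalise). Call the limit $f$. For any vertex $v$, pick $n_0$ with $v \in V_{n_0}$; for all large $n$ in the subsequence, $\cO_n$ restricted to the (finitely many) edges at $v$ agrees with $f$, and for those $n$ the orientation $\cO_n$ already gives $v$ in- and out-degree $\ge 1$ among its incident edges, so $f$ does too. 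Thus $f$ satisfies (i) and (ii).

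The main obstacle I anticipate is not the limiting argument — that is routine compactness — but the local existence statement, specifically making sure the finite orientation gives in- and out-degree $\ge 1$ at the \emph{interior} vertices even though $G[V_{n+1}]$ may have degree-$1$ vertices on its own boundary (vertices of $V_{n+1}\setminus V_n$ whose other neighbours lie outside $V_{n+1}$). The fix is the deliberate two-layer choice: only vertices of $V_n$ are required to behave well, and each such vertex has \emph{all} its $G$-neighbours inside $V_{n+1}$, hence degree $\ge 2$ in $G[V_{n+1}]$; running the Eulerian-orientation argument on (a suitable connected spanning subgraph of) $G[V_{n+1}]$ then forces in-degree $=$ out-degree at every vertex of a balanced multigraph extension, which is $\ge 1$ wherever the original degree was $\ge 2$. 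I would also remark that connectedness and infiniteness of $G$ are used only to guarantee the exhaustion exists and that each $V_n$ can be taken connected with nonempty boundary; local finiteness is what makes each vertex incident to finitely many edges, which is what the diagonal extraction needs.
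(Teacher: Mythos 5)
Your proof is correct, but it takes a genuinely different route from the paper's. The paper constructs the orientation directly by a greedy path-covering argument: it starts from a bi-infinite path through a fixed first vertex, oriented consistently along the path; it then iteratively selects the lowest-labelled uncovered vertex, routes a path through it (finite with both endpoints in the already-covered region, or an infinite ray with one endpoint there), and orients that path consistently; every vertex is an internal vertex of the path in which it first appears, so it automatically receives at least one in-edge and one out-edge, and the remaining edges are oriented arbitrarily. Your proof instead goes through a finite-to-infinite compactness argument: exhaust $G$ by finite connected induced subgraphs $G[V_n]$, engineered so that every vertex of $V_n$ has all of its $G$-neighbours inside $V_{n+1}$; orient each finite piece so that the vertices of $V_n$ are neither sinks nor sources — the classical device of pairing up odd-degree vertices by extra matching edges, orienting along an Euler circuit of the resulting Eulerian connected multigraph, and then discarding the matching edges works here, since removing a single added edge at a vertex of original degree $\geq 2$ cannot send its in- or out-degree to zero (for odd original degree $d\geq 3$ the balanced degree $(d+1)/2$ drops to $(d-1)/2\geq 1$); and finally diagonalise over the countable edge set to extract a limiting orientation, which inherits the sink-free and source-free property at each vertex because that property is determined by the finitely many incident edges, all of which lie in $G[V_{n+1}]$ once $v\in V_n$. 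Both arguments are sound; the paper's is direct and constructive, while yours is more modular (a classical finite Eulerian fact plus a standard compactness passage) and has the mild advantage of not needing to exhibit any particular infinite path or ray up front.
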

\emph{Proof.}
 Label $V$ by $\N$. We start by picking an infinite bi-infinite path $\gamma^{(1)}=(\gamma_i^{(1)}:i \in \Z)$ containing the vertex $k_1$ with label $1$ and pick the orientation on it so that $\{ \gamma_i^{(1)},\gamma_{i+1}^{(1)} \} $  gets the orientation $( \gamma_i^{(1)},\gamma_{i+1}^{(1)} ) $ for all $i$ in $\Z$. Let $k_2$ be the vertex with the minimal label which is not part of $\gamma^{(1)}$. If there exists a finite path $\gamma^{(2)}=(\gamma_0^{(2)},\ldots,\gamma_\ell^{(2)}) $ with $k_2 \in \{\gamma_j^{(2)}:j \in [ \ell -1] \} \subseteq V \setminus \{\gamma_i^{(1)} : i \in \Z \} $ and  $\gamma_0^{(2)},\gamma_{\ell}^{(2)} $ in $\gamma^{(1)}$ then pick one such path and pick the orientation on it so that $\{ \gamma_{i-1}^{(2)},\gamma_{i}^{(2)} \} $  gets the orientation $( \gamma_{i-1}^{(2)},\gamma_{i}^{(2)} ) $ for all $i \in [\ell] $. If no such path exists, then there exists an infinite path  $\gamma^{(2)}=(\gamma_0^{(2)},\gamma_1^{(2)},\ldots) $ with $k_2 \in \{\gamma_j^{(2)}:j \in \N  \} \subseteq V \setminus \{\gamma_i^{(1)} : i \in \Z \} $ and  $\gamma_0^{(2)}$ in $\gamma$. Pick one such path and pick the orientation on it so that $\{ \gamma_{i-1}^{(2)},\gamma_{i}^{(2)} \} $  gets the orientation $( \gamma_{i-1}^{(2)},\gamma_{i}^{(2)} ) $ for all $i \in \N $. 

Set $A_m $ to be the collection of all vertices belonging to the union  of the  first $m$ paths of the construction. If $A_m=V $ set $A_i=A_m$ for all $i \ge m$. Otherwise,
 at stage $m+1$ let $k_{m+1}$ be  the vertex with the minimal label which is not part of $A_m$. If there exists a finite path $\gamma^{(m+1)}=(\gamma_0^{(m+1)},\ldots,\gamma_\ell^{(m+1)}) $ with $k_{m+1} \in \{\gamma_j^{(m+1)}:j \in [ \ell -1] \} \subseteq V \setminus A_{m} $ and  $\gamma_0^{(m+1)},\gamma_{\ell}^{(m+1)} $ in $A_m $ then pick one such path and pick the orientation on it so that $\{ \gamma_{i-1}^{(m+1)},\gamma_{i}^{(m+1)} \} $  gets the orientation $( \gamma_{i-1}^{(m+1)},\gamma_{i}^{(m+1)} ) $ for all $i \in [\ell] $. If no such path exists, then there exists an infinite path  $\gamma^{(m+1)}=(\gamma_0^{(m+1)},\gamma_1^{(m+1)},\ldots) $ with $k_{m+1} \in \{\gamma_j^{(m+1)}:j \in \N  \} \subseteq V \setminus A_{m} $ and  $\gamma_0^{(m+1)}$ in $ A_{m}$. Pick one such path and pick the orientation on it so that $\{ \gamma_{i-1}^{(m+1)},\gamma_{i}^{(m+1)} \} $  gets the orientation $( \gamma_{i-1}^{(m+1)},\gamma_{i}^{(m+1)} ) $ for all $i \in \N $. Note that $A_{\infty}:=\cup_{i \in \N }A_i =V $ and that by construction every vertex has at least one edge oriented towards it and one away from it, as can be seen by considering the stage $m$ in which the vertex first belonged to $A_m$, noting that the vertex must have been an internal vertex of the path $\gamma^{(m)} $. Finally, the construction is concluded by picking an arbitrary orientation for the remaining edges. 
\qed

\subsection{Proof of Lemma \ref{lem:simplellemma}}
\label{s:simplelemma}
\begin{lemma*}
Let $P $ be the transition kernel of an irreducible Markov chain. Let $\hat P:= \sum_{i=0}^{m}p_i P^{i} $ where $\sum_{i=0}^m p_i=1$, $m \in \N$. If the greatest common denominator (gcd) of $\{i \in [m]:p_i>0 \} $ is 1 then $P$ is recurrent iff $\hat P$ is recurrent.    
\end{lemma*}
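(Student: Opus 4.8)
The plan is to reduce the statement to the following two standard facts about irreducible Markov chains: (a) recurrence is equivalent to $\sum_{n} P^n(x,x) = \infty$ for any fixed state $x$, and (b) if the gcd of $\{i \in [m] : p_i > 0\}$ is $1$, then there is an integer $N$ such that every integer $\ell \ge N$ can be written as a nonnegative integer combination $\ell = \sum_i c_i i$ with each $c_i$ a nonnegative integer and $c_i > 0$ only when $p_i > 0$ (the Chicken McNugget / numerical semigroup fact). Given these, the argument is a comparison of the Green's functions $G_P(x,x) := \sum_{n \ge 0} P^n(x,x)$ and $G_{\hat P}(x,x) := \sum_{n \ge 0} \hat P^n(x,x)$, fixing once and for all an arbitrary state $x$ of the (common) state space.

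First I would establish the easy direction: $\hat P$ recurrent $\implies$ $P$ recurrent. Since $\hat P = \sum_{i=0}^m p_i P^i$ and all summands are substochastic with nonnegative entries, expanding $\hat P^n$ by multinomial gives $\hat P^n(x,x) = \sum_{\mathbf{j}} \binom{n}{\mathbf{j}} \left(\prod_i p_i^{j_i}\right) P^{\sum_i i j_i}(x,x) \le P^{k}(x,x)$-type bounds; more cleanly, $\hat P^n(x,x) \le \sum_{\ell=0}^{mn} a_\ell^{(n)} P^\ell(x,x)$ where $\sum_\ell a_\ell^{(n)} = 1$, so summing over $n$ and exchanging the order of summation yields $G_{\hat P}(x,x) \le C \cdot G_P(x,x)$ for a suitable constant — actually one gets $G_{\hat P}(x,x) \le \sum_{n} \sum_\ell a_\ell^{(n)} P^\ell(x,x)$, and since for each fixed $\ell$ the total weight $\sum_n a_\ell^{(n)}$ is at most some geometric-type bound only if $p_0 < 1$. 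To avoid this subtlety I would instead argue contrapositively via a cleaner route: if $P$ is transient, i.e. $G_P(x,x) < \infty$, then using $p_0 < 1$ (which holds since the gcd condition forces some $i \ge 1$ with $p_i > 0$), one shows each return of $\hat P$ to $x$ forces, with probability bounded below, a return of the underlying $P$-chain, so $G_{\hat P}(x,x) < \infty$ too. Concretely, couple the $\hat P$-chain to the $P$-chain it is subordinated to (at each step $\hat P$ advances the $P$-chain by $i$ steps with probability $p_i$); then the $\hat P$-chain is a subsequence of the $P$-chain, hence visits $x$ only finitely often a.s. if the $P$-chain does.

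The harder direction is $P$ recurrent $\implies$ $\hat P$ recurrent, and this is where the gcd hypothesis is essential. Suppose $P$ is recurrent, so $\sum_{\ell} P^\ell(x,x) = \infty$. Let $N$ be the threshold from the numerical-semigroup fact. For any $\ell \ge N$, writing $\ell = \sum_i c_i i$ with $c_i \ge 0$ supported on $\{i : p_i > 0\}$ and setting $n = \sum_i c_i$, the multinomial expansion of $\hat P^n$ contains the term $\binom{n}{(c_i)_i}\big(\prod_i p_i^{c_i}\big) P^\ell(x,x)$ among its nonnegative summands, so $\hat P^n(x,x) \ge \delta_\ell\, P^\ell(x,x)$ for some $\delta_\ell > 0$. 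The difficulty is that different $\ell$ may need different $n$, and $\delta_\ell$ could shrink with $\ell$; I need the map $\ell \mapsto n$ and the constants $\delta_\ell$ to be controlled. The fix is to fix a single representation strategy: choose $N$ so that every residue class mod $d_* := \min\{i : p_i > 0\}$ is hit by some combination using a bounded number of "small correction" terms plus an arbitrary number of copies of $i = d_*$; then $\ell = r_\ell + q_\ell d_*$ where $r_\ell$ ranges over a finite set of "base" values with bounded coefficient sums $b_\ell$, and $n = b_\ell + q_\ell$, with $\delta_\ell \ge \min(p_{d_*}, \min_i p_i)^{b_{\max}} \cdot \binom{n}{\ldots}$; since the multinomial coefficient is $\ge 1$, we get $\hat P^{n(\ell)}(x,x) \ge \delta\, P^\ell(x,x)$ with $\delta > 0$ uniform in $\ell \ge N$. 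Finally the map $\ell \mapsto n(\ell) = b_\ell + (\ell - r_\ell)/d_*$ is finite-to-one with bounded fibers (at most $(m+1)\cdot$(number of base values)), so $\sum_n \hat P^n(x,x) \ge \sum_{\ell \ge N} \hat P^{n(\ell)}(x,x) / (\text{fiber bound}) \ge (\delta/C)\sum_{\ell \ge N} P^\ell(x,x) = \infty$, giving $\hat P$ recurrent. The main obstacle, then, is precisely this bookkeeping — making the subordination exponent $n(\ell)$ a controlled, finite-to-one function of $\ell$ with a uniform lower bound on the transferred mass — and the gcd $=1$ hypothesis is exactly what makes such a controlled representation possible; without it $\hat P$ would be supported on a sublattice and could be transient even when $P$ is recurrent (e.g. $P$ = lazy SRW on $\Z$, $\hat P = P^2$ restricted to odd shifts).
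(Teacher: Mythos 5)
Your easy direction ($P$ transient $\Rightarrow \hat P$ transient) is fine and is essentially the paper's argument: running the renewal clock $T_n=I_1+\cdots+I_n$ with i.i.d.\ increments $I_j$ distributed according to $(p_i)_{i=0}^m$, one gets $\sum_n\hat P^n(x,x)=\sum_\ell\bigl(\sum_n\Pr[T_n=\ell]\bigr)P^\ell(x,x)$, and $\sum_n\Pr[T_n=\ell]\le(1-p_0)^{-1}$ since each site is visited a geometric number of times, giving $\sum_n\hat P^n(x,x)\le(1-p_0)^{-1}\sum_\ell P^\ell(x,x)$.

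The hard direction, however, contains a genuine gap. You claim $\hat P^{n(\ell)}(x,x)\ge\delta\,P^\ell(x,x)$ with $\delta>0$ uniform in $\ell$, and your justification is that $\delta_\ell=\binom{n}{\mathbf c}\prod_i p_i^{c_i}\ge\min(p_{d_*},\min_i p_i)^{b_{\max}}$. This bound is false. The total exponent in $\prod_i p_i^{c_i}$ is $\sum_i c_i=n(\ell)=b_\ell+q_\ell$, which grows linearly in $\ell$, not the bounded quantity $b_\ell\le b_{\max}$. In particular $\prod_i p_i^{c_i}\le p_{d_*}^{\,q_\ell}\to0$ exponentially, while the multinomial coefficient $\binom{n}{\mathbf c}=\binom{b_\ell+q_\ell}{\mathbf c}$ (with $n-c_{d_*}$ bounded) only grows polynomially in $q_\ell$. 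So $\delta_\ell\to0$, and this is not an artifact of your particular representation: by the local CLT, $\Pr[T_n=\ell]$ is $O(1/\sqrt\ell)$ even for the optimal choice of $n$, so a uniform lower bound on a single term of the mixture cannot hold. The final step $\sum_n\hat P^n(x,x)\ge(\delta/C)\sum_{\ell\ge N}P^\ell(x,x)=\infty$ therefore does not follow.

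The fix is not to pick one $n=n(\ell)$ per $\ell$ but to sum over all $n$ simultaneously, which is what the paper does. Writing $q_\ell:=\Pr[\exists\,n:\;T_n=\ell]$ for the renewal (ever-hitting) probability of the walk $S_t=T_t$ on $\Z_+$, one has $\sum_n\Pr[T_n=\ell]=\E[\#\{n:T_n=\ell\}]\ge q_\ell$, hence $\sum_n\hat P^n(x,x)\ge\sum_\ell q_\ell\,P^\ell(x,x)$. The $\gcd=1$ hypothesis is used exactly here, via the elementary renewal theorem: it guarantees that $q_\ell$ is eventually bounded below, $q_\ell\ge\delta>0$ for all $\ell\ge N_0$. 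Then $\sum_n\hat P^n(x,x)\ge\delta\sum_{\ell\ge N_0}P^\ell(x,x)=\infty$. Your numerical-semigroup observation is the combinatorial shadow of the renewal-theoretic fact, but by itself it only controls \emph{which} $\ell$ are hit, not \emph{how much mass} is deposited on each $\ell$; the summation over $n$ is what aggregates the mass into the quantity $q_\ell$ that renewal theory bounds.
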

\begin{proof}
Clearly $\hat P$ is also irreducible. Let $\hat P^i$ be $(\hat P)^i$. Let $x$ be an arbitrary state. Clearly,  $\sum_{i \ge 0 } \hat P^i(x,x) \le (1-p_0)^{-1} \sum_{i \ge 0 }P^i(x,x) $. Conversely, consider the random walk $(S_t)_{t \in \Z_+}$ on $\Z_+$ started from $0$ whose transition kernel is given by $K(i,i+j)=p_j $. Let $q_{\ell} $ be the probability that $S_t=\ell$ for some $t$. It is standard that the assumption that the gcd of $\{i \in [m]:p_i>0 \} $ is 1 implies that there is some $\delta>0$ and $N_0 \in \N $ such that for all $\ell \ge N_0$ we have that $q_{\ell} \ge \delta$ (we shall use the lemma only in the case  when $p_i>0 $ for all $i \in [m] $, for which the proof of the last claim is particularly simple).  Finally, it is easy to see that $\sum_{i \ge 0 } \hat P^i(x,x) \ge \sum_{i \ge 0 } q_i P^i(x,x) \ge \delta \sum_{i \ge N_0 } P^i(x,x)$. Thus $\sum_{i \ge 0 } \hat P^i(x,x)  $ diverges iff $\sum_{i \ge 0 }  P^i(x,x)  $ diverges.
\end{proof}

\subsection{Proof of Lemma \ref{lem:Csym}}
\label{s:simplelem2}
\begin{lemma*}
In the setup of Definition \ref{d:alphah} we have that
 $\pi_{\mathcal{A}}$ is stationary for both $\mathbf{Y}^{\ee} $ and $\mathbf{Y}^{\vv} $. Moreover,   for all $a,b \in \mathcal{A} $ we have that
 \begin{equation*}
 P_{\mathcal{A},\ee}(a,b)=P_{\mathcal{A},\ee}(b^{\rr},a^{\rr}) \quad \text{and} \quad P_{\mathcal{A},\vv}(a,b)=P_{\mathcal{A},\vv}(b^{\rr},a^{\rr}).
 \end{equation*} 
\end{lemma*}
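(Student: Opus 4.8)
The plan is to obtain both assertions from three symmetries of the $k$-NBRW on $G=H(S)$: translation invariance, invariance under negation, and — after passing to the induced chain on $\mathcal{A}$ — a reversal symmetry. For $k=1$ everything below is immediate from \eqref{e:revsymmetry}, so the real content is $k>1$.

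First I would record the two relevant automorphisms. Since $S=-S$ and $H$ is Abelian, for each $g\in H$ the translation $\tau_g\colon x\mapsto x+g$ and the negation $\nu\colon x\mapsto -x$ are automorphisms of $G$; each acts on length-$k$ paths coordinatewise. Automorphisms preserve adjacency, degrees and whether a path is ``stuck'', hence each gives an automorphism of the edge and of the vertex $k$-NBRW Markov chains on their full state spaces (including the uniform step taken when stuck), and in particular a weight-preserving bijection of trajectories. The key point is that $\mathcal{A}$ is preserved: $\tau_g(\alpha_h)=\alpha_{h+g}$ and $\tau_g(\alpha_h^{\rr})=\alpha_{h+g}^{\rr}$, while — using commutativity — $\nu(\alpha_h)=(-h,-h-s,\dots,-h-ks)=\alpha_{-h-ks}^{\rr}$, so $\nu$ interchanges the ``forward'' family $\{\alpha_h:h\in H\}$ with the ``reversed'' family $\{\alpha_h^{\rr}:h\in H\}$ and $-\mathcal{A}=\mathcal{A}$; also $\mathcal{A}^{\rr}=\mathcal{A}$ by definition. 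Since $\tau_g$, $\nu$ fix $\mathcal{A}$ setwise, they (and their compositions $\phi$) descend to symmetries of the induced chains, $P_{\mathcal{A},\ee}(\phi a,\phi b)=P_{\mathcal{A},\ee}(a,b)$ and likewise for $P_{\mathcal{A},\vv}$, and at the level of excursions $\phi$ is a weight-preserving bijection between excursions from $a$ to $b$ avoiding $\mathcal{A}$ in between and excursions from $\phi a$ to $\phi b$ avoiding $\mathcal{A}$.

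Next, the reversal symmetry \eqref{e:Cayleysymm}. Given an excursion $\gamma^{(0)}=a,\gamma^{(1)},\dots,\gamma^{(n)}=b$ with $\gamma^{(1)},\dots,\gamma^{(n-1)}\notin\mathcal{A}$, the reversed sequence $(\gamma^{(n)})^{\rr},\dots,(\gamma^{(0)})^{\rr}$ again avoids $\mathcal{A}$ in between (since $\mathcal{A}^{\rr}=\mathcal{A}$) and runs from $b^{\rr}$ to $a^{\rr}$. When the walk never gets stuck along the excursion, this reversed sequence is a legal excursion of \emph{equal} weight: for the edge $k$-NBRW this is \eqref{e:pike2}, using that $\pi_{k}^{\ee}$ is constant on $\mathcal{A}$ — every $\alpha_h$ has pairwise distinct coordinates because $s$ has infinite order, and $\deg\equiv|S|$, so $\pi_{k}^{\ee}(\alpha_h)=(|S|-1)^{-(k-1)}$, whence the boundary factors in \eqref{e:pike2} cancel. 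This already settles \eqref{e:Cayleysymm} under $\ekk$. In general Theorem~\ref{thm:Cayley} assumes neither $\ekk$ nor $\vkk$, and a single $k$-NBRW step need not be reversal-symmetric once the walk gets stuck; this is exactly where one exploits that $H$ is Abelian, via the observation that for every pair of vertices $x,y$ the affine map $z\mapsto x+y-z$ is an automorphism of $G$ swapping $x$ and $y$. Concretely I would split into cases according to whether $a$ and $b$ lie in the forward family $\{\alpha_h\}$ or the reversed family $\{\alpha_h^{\rr}\}$ of $\mathcal{A}$, and in each case produce — by composing suitable translations, the negation $\nu$, and (for the pieces of the excursion where the walk gets stuck) the symmetrization afforded by $\nu$ — a weight-preserving bijection from excursions $a\to b$ avoiding $\mathcal{A}$ onto excursions $b^{\rr}\to a^{\rr}$ avoiding $\mathcal{A}$; the identity $\nu(\alpha_h)=\alpha_{-h-ks}^{\rr}$ is what lets one express ``reversal on $\mathcal{A}$'' through $\nu$ and a translation. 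This yields $P_{\mathcal{A},\ee}(a,b)=P_{\mathcal{A},\ee}(b^{\rr},a^{\rr})$ and $P_{\mathcal{A},\vv}(a,b)=P_{\mathcal{A},\vv}(b^{\rr},a^{\rr})$ for all $a,b\in\mathcal{A}$.

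Finally, stationarity of $\pi_{\mathcal{A}}\equiv 1$ is then formal: for each $b\in\mathcal{A}$, $\sum_{a\in\mathcal{A}}\pi_{\mathcal{A}}(a)P_{\mathcal{A},\vv}(a,b)=\sum_{a\in\mathcal{A}}P_{\mathcal{A},\vv}(a,b)=\sum_{a\in\mathcal{A}}P_{\mathcal{A},\vv}(b^{\rr},a^{\rr})=\sum_{a'\in\mathcal{A}}P_{\mathcal{A},\vv}(b^{\rr},a')=1=\pi_{\mathcal{A}}(b)$, where the third equality is \eqref{e:Cayleysymm}, the fourth reindexes by the bijection $a\mapsto a^{\rr}$ of $\mathcal{A}$, and the penultimate equality uses that the rows of $P_{\mathcal{A},\vv}$ sum to $1$ — which is where one must know $\mathcal{A}$ is a.s.\ returned to by the $k$-NBRW, so that the induced chain is a genuine (honest) stochastic matrix; I would establish this recurrence of $\mathcal{A}$ separately. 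The same computation handles $P_{\mathcal{A},\ee}$. I expect the main obstacle to be the ``stuck'' part of Step~2: verifying that, on a Cayley graph of an Abelian group, combining reversal with the negation automorphism really does restore the step-by-step symmetry that plain reversal loses once the walk gets stuck (including on the repeated-vertex paths that stuck steps create), together with the somewhat fiddly bookkeeping of which composition of $\tau_g$ and $\nu$ to use in each family-configuration of $(a,b)$.
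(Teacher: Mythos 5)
The key ingredients are in your first paragraph, but the second paragraph conflates two different routes and as a result you anticipate an obstacle that the correct argument avoids entirely. The paper's proof never reverses time. Fix $a,b\in H$ and consider the affine automorphism $\phi(z):=-z+(a+b+ks)$ of $G$. Then $\phi(\alpha_a)=\alpha_b^{\rr}$ and $\phi(\alpha_b)=\alpha_a^{\rr}$, $\phi(\mathcal{A})=\mathcal{A}$, and $\phi$ (being a graph automorphism extended coordinatewise to paths) preserves the one-step transition kernel of the $k$-NBRW on its \emph{entire} state space, \emph{including the stuck-step rule}, which is just ``uniform over neighbors'' and hence is respected by any automorphism. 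Applying $\phi$ to a trajectory in the original time direction is therefore already a step-by-step weight-preserving bijection from excursions $\alpha_a\to\alpha_b$ avoiding $\mathcal{A}$ onto excursions $\alpha_b^{\rr}\to\alpha_a^{\rr}$ avoiding $\mathcal{A}$, with no appeal to \eqref{e:pike2}, to condition $\ekk$, or to the multiset identity of Lemma \ref{lem:mik}. The mixed-family cases (e.g.\ $a=\alpha_x$, $b=\alpha_y^{\rr}$) are handled by the analogous composition of $\nu$ and a translation, or equivalently by chaining the same invariances around, as the paper does. Your anticipated ``main obstacle'' — restoring the step-by-step symmetry that plain reversal loses at stuck steps — is a phantom of the hybrid approach; once you drop time reversal there is nothing to restore.

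Two further remarks. First, the path-reversal argument you lead with would, even under $\ekk$, only handle the edge chain (it leans on \eqref{e:pike2}, for which there is no vertex analogue), so it cannot be the backbone of the proof; the automorphism argument treats the edge and vertex chains uniformly. Second, your stationarity derivation from \eqref{e:Cayleysymm} is correct and is actually more explicit than the paper, which leaves that step implicit; you are also right that it relies on $\mathcal{A}$ being a.s.\ returned to so that the rows of $P_{\mathcal{A},\vv}$ and $P_{\mathcal{A},\ee}$ sum to $1$, a point the paper invokes without elaboration in the proof of Theorem \ref{thm:Cayley}.
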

  
\begin{proof}
Let $-(g_1,\ldots,g_k):=(-g_1,\ldots,-g_k)$. Observe that $-\alpha_h=-\alpha_{-h-sk}^{\rr} $ and that  $-\alpha_h^{\rr}=\alpha_{-h-sk} $.   By translation $P_{\mathcal{A},\ee}(\alpha_a,\alpha_b)=P_{\mathcal{A},\ee}(\alpha_{a+c},\alpha_{b+c}) $, $P_{\mathcal{A},\ee}(\alpha_a^{\rr},\alpha_b^{\rr})=P_{\mathcal{A},\ee}(\alpha_{a+c}^{\rr},\alpha_{b+c}^{\rr}) $,  $P_{\mathcal{A},\ee}(\alpha_a,\alpha_b^{\rr})=P_{\mathcal{A},\ee}(\alpha_{a+c},\alpha_{b+c}^{\rr}) $ and  $P_{\mathcal{A},\ee}(\alpha_a^{\rr},\alpha_b)=P_{\mathcal{A},\ee}(\alpha_{a+c}^{\rr},\alpha_{b+c}) $,   for all $a,b,c \in H$. The same holds w.r.t.\ $P_{\mathcal{A},\vv} $. Also, $P_{\mathcal{A},\ee}(\alpha_x,\alpha_y)=P_{\mathcal{A},\ee}(-\alpha_x,-\alpha_y)$ and  $P_{\mathcal{A},\vv}(\alpha_x,\alpha_y)=P_{\mathcal{A},\vv}(-\alpha_x,-\alpha_y)$. To see this,  consider an arbitrary path $(\alpha_x=a_{0},a_1,\ldots,a_r=\alpha_y)$ with $a_i \in \Omega_k^{\ee} $ (respectively, $\in \Omega_k^{\vv} $)  for all $i$.  Then  $(-\alpha_x=-a_{0},-a_1,\ldots,-a_r=-\alpha_y)$ is a path from $-\alpha_x $ to $-\alpha_y $. Moreover, the probability the edge (respectively, vertex) $k$-NBRW started from $\alpha_x$ follows the first path equals the probability  the edge (respectively, vertex) $k$-NBRW started from $-\alpha_x$ follows the second path. Indeed, this follows from the fact that $x \to -x $ is an automorphism of $G$ which extends to a probability preserving bijection of $\Omega_k^{\ee} $ (respectively, $\Omega_k^{\vv}$) in the sense that $P_{k,\ee}(a,b)=P_{k,\ee}(-a,-b) $ for all $a,b \in \Omega_k^{\ee} $ (respectively,  $P_{k,\vv}(a,b)=P_{k,\vv}(-a,-b) $ for all $a,b \in \Omega_k^{\vv} $). 

Hence \[P_{\mathcal{A},\ee}(\alpha_a,\alpha_b)=P_{\mathcal{A},\ee}(\alpha_{-ks},\alpha_{b-a-ks})=P_{\mathcal{A},\ee}(-\alpha_{-ks},-\alpha_{b-a-ks})\] \[=P_{\mathcal{A},\ee}(\alpha_{0}^{\rr},\alpha_{a-b}^{\rr})=P_{\mathcal{A},\ee}(\alpha_{b}^{\rr},\alpha_{a}^{\rr})\]
(where 0 is the identity element of $H$) and similarly  $P_{\mathcal{A},\vv}(\alpha_a,\alpha_b)=P_{\mathcal{A},\vv}(\alpha_{b}^{\rr},\alpha_{a}^{\rr}) $. 

Similarly,    
 \[P_{\mathcal{A},\ee}(\alpha_a,\alpha_b^{\rr})=P_{\mathcal{A},\ee}(\alpha_{-ks},\alpha_{b-a-ks}^{\rr})=P_{\mathcal{A},\ee}(\alpha_{0}^{\rr},\alpha_{a-b})=P_{\mathcal{A},\ee}(\alpha_{b}^{\rr},\alpha_{a}).\]
The identities $P_{\mathcal{A},\ee}(\alpha_a^{\rr},\alpha_b)=P_{\mathcal{A},\ee}(\alpha_b^{\rr},\alpha_a) $, $P_{\mathcal{A},\vv}(\alpha_a,\alpha_b^{\rr})=P_{\mathcal{A},\vv}(\alpha_b,\alpha_a^{\rr})$ and $P_{\mathcal{A},\vv}(\alpha_a^{\rr},\alpha_b)=P_{\mathcal{A},\vv}(\alpha_b^{\rr},\alpha_a) $ are proved in the same manner.
\end{proof}

\end{document}